\def\c{\mathbf{c}}
\def\L{\mathcal{L}}
\def\mch{\mathcal{H}}
\def\mcv{\mathcal{V}}
\def\tdu{\widetilde{U}}
\def\weakconv{\rightharpoonup}          
\newtheorem{theorem}{Theorem}[section]
\newtheorem{proposition}[theorem]{Proposition}
\newtheorem{lemma}[theorem]{Lemma}
\newtheorem{definition}[theorem]{Definition}
\theoremstyle{remark}
\newtheorem{remark}{Remark}
\newcommand{\E}{\mathbb E}
\newcommand{\N}{\mathbb N}
\newcommand{\Pb}{\mathbb{P}}
\newcommand{\Fc}{\mathcal{F}}
\newcommand{\Fct}{\left( \mathcal{F}_t \right)_{t \geq 0}}
\newcommand{\Uc}{\mathscr{U}}
\newcommand{\Xc}{\mathcal{X}}
\newcommand{\Sc}{\mathcal{S}}
\newcommand{\be}{\begin{equation}}
\newcommand{\ee}{\end{equation}}
\numberwithin{equation}{section}
\title[Stochastic Nernst-Planck-Navier-Stokes equations]{Global well-posedness and enhanced dissipation for the 2D stochastic Nernst-Planck-Navier-Stokes equations with transport noise}
\author{Quyuan Lin}
\address{School of Mathematical and Statistical Sciences, Clemson University, Clemson, SC 29634}
\email{quyuanl@clemson.edu}
\author{Rongchang Liu}
\address{Department of Mathematics, University of Arizona, Tucson, AZ 85721}
\email{lrc666@math.arizona.edu}
\author{Weinan Wang}
\address{Department of Mathematics, University of Oklahoma, norman, OK 73019}
\email{ww@ou.edu}
\date{\today}
\begin{document}
\begin{abstract}
    In this paper, we consider the 2D stochastic Nernst-Planck-Navier-Stokes equations with transport noise. By assuming the ionic species having the same diffusivity and opposite valences, we prove the global well-posedness of the system. Furthermore, we illustrate the enhanced dissipation phenomenon in the system with specific transportation noise
    by establishing that it enables an arbitrarily large exponential convergence rate of the solutions. 
\end{abstract}

\maketitle

MSC Subject Classifications: 35Q35, 35Q86, 60H15, 76M35\\

Keywords: Stochastic Nernst-Planck-Navier-Stokes equations, transport noise, global well-posedness, enhanced dissipation

\section{Introduction}

\subsection{The Nernst-Planck-Navier-Stokes system}
Electrodiffusion refers to the process in which ions, possessing different valences, undergo movement characterized by the combination of advection and diffusion. This movement is driven by the interplay of factors such as an electric potential, the individual concentration gradients of ions, and the presence of fluid flow. Simultaneously, electric forces generated by the ions compel the fluid to move. In this paper, we consider electrodiffusion of ions in fluids driven by a transport noise, which takes the effect of small scale randomness into account.

Studies of inoic electrodiffusion have been extensively addressed in different branches of science, bringing forth outstanding applications in the real world. There are tremendous applications of electrodiffusion occurrences in
semiconductors \cite{biler1994debye, gajewski1986basic, mock1983analysis, gao2014high}, in water purification, desalination, and ion separations \cite{alkhad2022electrochemical,zhu2020ion,lee2018diffusiophoretic,yang2019review,gao2014high,lee2016membrane}, and in ion selective membranes \cite{davidson2016dynamical,goldman1989electrodiffusion}.

Mathematically, ionic electrodiffusion is described by the Nernst-Planck (NP) equations, which are given by
\begin{equation} \label{e.w12045}
\partial_t c_i + u \cdot \nabla c_i - D_i \Delta c_i = D_i z_i \nabla \cdot (c_i \nabla \Phi), \quad i=1,...,n,
\end{equation} 
describing the time evolution of the ionic concentrations $c_i$ of $n$ ionic species, with valences $z_i\in\mathbb{R}$ and diffusivities $D_i>0$. Above, the electrical potential $\Phi$ is determined by the Poisson equation
\begin{equation} \label{e.w12046}
- \Delta \Phi = \rho = \sum\limits_{i=1}^{n} z_ic_i.
\end{equation} 
The velocity filed $u$ obeys the two-dimensional incompressible Navier-Stokes system
\begin{equation} \label{e.w12047}
\partial_t u -\nu\Delta u+ u \cdot \nabla u + \nabla p = - \rho \nabla \Phi,\quad \nabla\cdot u=0,
\end{equation} 
where $-\rho\nabla\Phi$ represents the electric force generated by the ions.
The system of equations \eqref{e.w12045}-\eqref{e.w12047} is called the Nernst-Planck-Navier-Stokes (NPNS) system. 

Over the past decade, extensive research has been conducted on the deterministic NPNS system, examining its behavior in the presence and absence of physical boundaries. In the case of two ionic species $c_1$ and $c_2$ with the same diffusivity $D_1=D_2=D$ and opposite valences $z_1=-z_2=1$, which is the simplest setting for the ionic concentrations, the existence, uniqueness, and long-time behavior of solutions were obtained in \cite{ryham2009existence} for $L^2$ large and small initial data on 2D and 3D smooth bounded domains respectively. See also similar results in \cite{schmuck2009analysis} for different boundary conditions. In \cite{constantin2019nernst}, the authors considered the NPNS model with $N$ ionic species with arbitrary valences and diffusivities, which is the most general setting. They achieved global regularity of solutions for $W^{2,p}$ initial data and demonstrated their convergence to stable steady states on 2D bounded smooth domains, incorporating selective boundary conditions. In the periodic setting, the global well-posedness of the NPNS system as well as the exponential stability of solutions were established in \cite{abdo2022space,abdolong}. See also \cite{ALW,IS,ignatova2022global} for global well-posedness results of the Nernst-Planck-Euler and Nernst-Planck-Darcy systems, which are two other closely related models. 

Stochastic effects play a crucial role in the study of a wide range of PDEs. These effects encompass turbulence, random fluctuations, and uncertainty within fluid systems, essential elements for achieving a precise and realistic representation of real-world fluid dynamics in modeling and understanding. There are many results \cite{BT, mikulevicius2004stochastic, ZZ} on the stochastic Navier-Stokes equations. 
Although the deterministic NPNS system has been extensively studied, there is a notable scarcity of research about the stochastic version of NPNS system. In a recent work \cite{abdo2023stochastic}, the authors studied the 2D stochastic NPNS system with equation \eqref{e.w12047} being replaced by the stochastic NSE in the It\^o sense:
\begin{equation}\label{eqn:NSE-sto}
    du + (u \cdot \nabla u  - \nu \Delta u + \nabla p) dt
= - \rho \nabla \Phi dt + f dt+ gdW,
\end{equation}
and they established the global well-posedness of the system, as well as the existence, uniqueness, and smoothness of the ergodic invariant measure. Since $g(x)$ is independent of the solutions, it is referred to as an additive noise. Notably, the equations governing the ionic concentrations do not account for random noise, as the inclusion of additive noise in their evolution would compromise the non-negativity of $c_i$. 

\subsection{NPNS with transport noise and enhanced dissipation} 
In this work, we consider the following stochastic NPNS system driven by a Stratonovich-type transport noise, 
\begin{align}\label{e.w04091}
    \begin{split}
    dc_1&=(D\Delta c_1 -u\cdot \nabla c_1 +D  \nabla \cdot (c_1 \nabla \Phi))dt + b\cdot \nabla c_1 \circ d\mathbb{W}_t
    \\
    dc_2&=(D\Delta c_2 -u\cdot \nabla c_2 -D  \nabla \cdot (c_2 \nabla \Phi))dt + b\cdot \nabla c_2 \circ d\mathbb{W}_t
    \\
    du&=(\nu\Delta u- \Pi(u\cdot \nabla u +\rho \nabla \Phi))dt + \Pi(\theta\cdot \nabla u)  \circ d\mathbb{W}_t , 
    \\
    \nabla\cdot u &=0,
    \\
    -\Delta \Phi &= \rho = c_1 - c_2,
    \end{split}
\end{align}
where $b$ and $\theta$ are divergence-free vector fields, and $\Pi$ denotes the Leray projection. Unlike additive noise, transport noise is employed to model small-scale turbulence and the eddy viscosity phenomenon, which has garnered significant attention over the years, see, for example,  \cite{agresti2022stochastic-2, agresti2022stochastic-1, agresti2021stochastic-2, breit,flandoli2008introduction,LLW,luo2023enhanced, mikulevicius2004stochastic} and references therein. %Based on the literature, the transport noise in the NPNS has the following physical meaning. 
At the microscopic level, the fluid and ionic particles are not only transported by a deterministic drift, but also affected by a space dependent environmental noise, where the latter results in the transport noise at the macroscopic level. Such a viewpoint can be traced back to the work of Reynolds in 1880 as noticed in \cite{mikulevicius2004stochastic} and is closely related to Kraichnan's model of turbulence \cite{kraichnan1968small,kraichnan1994anomalous}. 

Our main results are the well-posedness and enhanced dissipation for the solutions to this NPNS system with Stratonovich-type transport
noise.  
When examining the global well-posedness, the presence of transport noise renders the approach in \cite{abdo2023stochastic} and the entropy estimates in the deterministic case \cite{constantin2019nernst} inapplicable. To address this challenge, we focus on the scenario involving two ionic species with identical diffusivity and opposite valences. This allows us to establish {\it a priori} estimates and extend the local solution to a global one. The non-negativity  of the ionic concentrations plays an important role in the process, which is proved by utilizing a general version of It\^o's formula \cite{kry2013} and a maximal principle type argument. The global well-posedness of the case involving $N$ ionic species with arbitrary valences and diffusivities is left as a future work.

Examining enhanced dissipation in the evolution of a passive scalar $\rho$ (such as temperature distribution or chemical concentration) influenced by fluid motions is a crucial issue in physics and engineering \cite{shraiman2000scalar}. This topic has garnered notable interest from the mathematics community, as evidenced by the recent survey \cite{zelati2023mixing} and the references therein. The evolution of $\rho$ is modeled by the advection-diffusion equation 
\[\partial_t\rho + u\cdot\nabla \rho = \nu\Delta\rho,\]
where $u$ is the ambient flow and $\nu$ is the molecular diffusion. Enhanced dissipation refers to the phenomenon that the presence of a suitable flow $u$ can accelerate the $L^2$ dissipation rate beyond the purely molecular diffusive one. This acceleration is typically due to the shear-diffuse mechanism or the phase mixing of the chaotic flows, see \cite{bedrossian2021almost,zelati2020relation,gess2021stabilization} and references therein.
Recently, enhanced dissipation was also proved in \cite{luo2023enhanced} for the 2D Navier-Stokes equations with a transport noise. By suitably selecting the driving noise, the $L^2$ norm of the velocity field can decay at a rate that can be arbitrarily large. Notably, the enhanced dissipation in \cite{luo2023enhanced} differs from previous studies, as it primarily stems from additional diffusion introduced by the Stratonovich-It\^o corrector, arising from the Stratonovich transport noise modeling small-scale turbulence.

We prove the enhanced dissipation of the stochastic NPNS based on the approach of \cite{luo2023enhanced}, mainly taking the advantage of the Stratonovich transport noise. However, due to the coupled nature of the NPNS system, the initial deviation of the ionic concentrations from their mean values creates a significant electric force on the fluid, leading to turbulence that dominates over the small scale diffusion. Consequently, it is necessary to wait until the concentrations and velocity diminish within a small neighborhood of the equilibrium. This is crucial for the enhanced dissipation generated by the transport noise to come into play, as indicated in Theorem \ref{t.w04093} below. In particular, the size of the neighborhood depends on the viscosity of the fluid and the diffusivity of the concentrations. This is a phenomenon different from \cite{luo2023enhanced}.

To the best of the the authors' knowledge, this is the first work proving global well-posedness and enhanced dissipation of the stochastic NPNS system with transport noise.

\subsection{Organization of the paper} The rest of the paper is organized as follows. In Section \ref{prelim}, we give definitions, introduce basic notations, and state our main results: (i) global well-posedness (Theorem \ref{t.w04092}) and (ii) the enhanced dissipation (Theorem \ref{t.w04093}). In Section \ref{cut-off}, we prove the global existence of martingale solutions and the pathwise uniqueness of solutions to the cut-off system \eqref{sys:modified}. In Section \ref{sec:nonneg}, we establish the non-negativity of ionic concentrations. Building upon this result, we proceed to prove the global well-posedness of the NPNS system \eqref{e.w04091}. In Section \ref{enhance}, we show the enhanced dissipation phenomenon of the stochastic NPNS system.

\section{Preliminaries and main results}\label{prelim}
\subsection{Preliminaries}
The universal constant $C$ appears in the paper may change from line to line. We shall use subscripts to indicate the dependence of $C$ on other parameters, {\it e.g.}, $C_\alpha$ means that the constant $C$ depends only on $\alpha$. We use the notation $a\lesssim b$ to represent $a\leq C b$ for some constant $C$, and $a\lesssim_\alpha b$ to mean that $a\leq C_\alpha b$ with the constant $C$ depends on $\alpha$.

Let $\mathbb{T}^2 = \mathbb{R}^2/2\pi\mathbb{Z}^2$ (so that the constant of the Poincar\'e inequality is $1$) and $\mathbb{Z}_0^2=\mathbb{Z}^2\setminus\{0\}.$  For $1 \leq p \leq \infty$, we denote by $L^p(\mathbb{T}^2)$ the Lebesgue spaces of measurable functions $f$ in $\mathbb T^2$ such that 
\be 
\|f\|_{L^p} = \left(\int_{\mathbb{T}^2} \|f\|^p dx\right)^{1/p} <\infty, \;\text{if } p \in [1, \infty) \quad \text{and} \quad 
%\ee if $p \in [1, \infty)$ and 
%\be 
\|f\|_{L^{\infty}} = \text{esssup}_{x\in\mathbb{T}^2}  |f(x)| < \infty, \; \text{if } p = \infty.
\ee
%\ee if $p = \infty$. 
The $L^2$ inner product is denoted by $\langle\cdot,\cdot\rangle$. For a Banach space $(X, \|\cdot\|_{X})$ and $p\in [1,\infty)$, we consider the Lebesgue spaces $ L^p(0,T; X)$ of functions $f$ in $X$  satisfying 
$
\int_{0}^{T} \|f\|_{X}^p dt  <\infty
$ with the usual convention when $p = \infty$.  For $k \in \mathbb N$, we denote by $H^k(\mathbb T^2)$ the classical Sobolev space of measurable functions $f$ in $\mathbb T^2$ such that $ 
\|f\|_{H^k}^2 = \sum\limits_{|\alpha| \leq k} \|D^{\alpha}f\|_{L^2}^2 < \infty.$
Let $H$ and $V$ be
\be\label{def-zero-mean} 
H = \left\{v\in L^2(\mathbb T^2): \nabla \cdot v = 0,  \, \int_{\mathbb T^2} v dx = 0\right\}, \quad V= H^1(\mathbb T^2)\cap H
\ee  
representing divergence-free and zero-mean vectors in $L^2(\mathbb T^2)$ and $H^1(\mathbb T^2)$, and let $H^{-1}$ be the dual space of $H^1$. 
Denote by $\Pi f:= f - \nabla\Delta^{-1} \nabla\cdot f$ the Leray-Hodge projection. For simplicity, we often write $L^p$ and $H^k$ instead of $L^p(\mathbb{T}^2)$ and $H^k(\mathbb{T}^2)$ when there is no confusion.

For functions $f\in L^2(\mathbb T^2)$, the Fourier coefficients are denoted by $f_k = \int_{\mathbb T^2} f e^{-ikx} dx$ for $k\in \mathbb Z^2$, and $f$ can be represented as $f=\sum\limits_{k\in \mathbb Z^2} f_k e^{ikx}$. Define the projection $P_N$ as
\[\label{proj}
P_N f = f^N:= \sum\limits_{|k|\leq N} f_k e^{ikx},
\]
which projects a function $f\in L^2(\mathbb T^2)$ into a finite dimensional subspace of $L^2(\mathbb T^2)$.

Fix a stochastic basis $\mathcal{S}=(\Omega, \mathcal{F}, (\mathcal{F}_t)_{t\geq0}, \mathbb{P})$ satisfying the usual conditions. Let $\Uc$ be a real separable Hilbert space with a complete orthonormal basis $\{\mathbf{e}_k\}_{k\geq 1}$, and let $\{W^k\}_{k\geq 1}$ be a family of independent standard Brownian motion so that 
\begin{align}\label{e.L112402}
    \mathbb{W}: = \sum_{k\geq 1} W^k\mathbf{e}_k
\end{align}
is an $\mathcal{F}_t$ adapted and $\Uc$-valued cylindrical Wiener process, for which we mean that the process takes values in a larger Hilbert space $\Uc_0$ such that the embedding from $\Uc$ to $\Uc_0$ is Hilbert-Schmidt, and the trajectories of $\mathbb{W}$ is in $C([0,\infty),\Uc_0)$ $\mathbb P$-a.s., see \cite{da2014stochastic}. Given any Banach space $X$, denote by  $L_2(\Uc, X)$ the space of Hilbert-Schmidt operators $\eta:\Uc\to X$ such that  $\eta_k:=\eta e_k\in X$ for each $k$ and 
$\|\eta\|_{L_2(\Uc, X)}^2:=\sum_{k}\|\eta_k\|_{X}^2<\infty. $
Assume that 
$b, \theta\in L_2(\Uc, L^\infty)$ be divergence-free vector fields and denote
\begin{align*}
    \L_{b_k}f = b_k\cdot \nabla f, \quad \L_{\theta_k}g = \Pi(\theta_k\cdot\nabla g),
\end{align*}
for any regular scalar function $f$ and vector field $g$. Note that since $b_k$ and $\theta_k$ are divergence-free, the dual of $\L_{b_k}, \L_{\theta_k}$ are $-\L_{b_k}, -\L_{\theta_k}$. 
We then interpret equation \eqref{e.w04091} in the corresponding It\^o form 
\begin{align}\label{e.L112403}
    \begin{split}
    dc_1&=\left(D\Delta c_1 - u\cdot \nabla c_1  + D \nabla \cdot (c_1 \nabla \Phi) + \frac{1}{2} \sum\limits_{k} \L_{b_k}^2c_1 \right)dt 
    + \sum\limits_{k}\L_{b_k}c_1d{W}_t^k, 
    \\
    dc_2&=\left(D\Delta c_2 -u\cdot \nabla c_2  - D \nabla \cdot (c_2 \nabla \Phi) +  \frac{1}{2} \sum\limits_{k}\L_{b_k}^2c_2  \right)dt
    + \sum\limits_{k}\L_{b_k}c_2 d{W}_t^k,
    \\
    du&=\left(\nu \Delta u- \Pi\Big(  u\cdot \nabla u + \rho \nabla \Phi\Big)  + \frac{1}{2} \sum\limits_{k}  \L_{\theta_k}^2u \right)dt+  \sum\limits_{k} \L_{\theta_k}u d{W}_t^k, 
    \\
    \nabla\cdot u &=0,
    \\
    -\Delta \Phi &= \rho = c_1 - c_2.
    \end{split}
\end{align}

We now give the definition of a global pathwise solution to the above stochastic NPNS system, which is also called global strong solution in the probabilistic sense. 
\begin{definition}[Pathwise solution]
\label{def:pathwise.sol}
Let $(u_0,c_1(0),c_2(0)) \in L^2\left(\Omega; H\times L^2\times L^2\right)$ be $\Fc_0$-measurable. Then $(c_1,c_2,u)$ is called a global pathwise solution if it is an $\mathcal{F}_t$-adapted process such that for all $T \geq 0$,
\begin{equation}
	\label{eq:solution.regularity}
	(u,c_1,c_2) \in L^2\left(\Omega; C\left([0, T], H\times L^2\times L^2\right) \cap L^2\left(0,T; V\times H^1\times H^1\right)\right),
\end{equation}
and for all $t\in[0, T]$, any function $\phi\in H^1$, and any vector filed $\varphi\in V$, the following identities 
\begin{align}
    \begin{split}
    \langle c_1(t), \phi\rangle &=\langle c_1(0), \phi\rangle+\int_0^t\left(-\langle D\nabla  c_1 - u c_1  +  D c_1 \nabla \Phi,\nabla \phi\rangle - \frac{1}{2} \sum\limits_{k} \langle\L_{b_k}c_1,\L_{b_k}\phi\rangle\right)dr
    \\
    &\hspace{3cm}- \sum\limits_{k}\int_0^t\langle c_1, \L_{b_k}\phi\rangle d{W}_r^k, 
    \\
    \langle c_2(t), \phi\rangle &=\langle c_2(0), \phi\rangle+\int_0^t\left(-\langle D\nabla  c_2 - u c_2  - D c_2 \nabla \Phi,\nabla \phi\rangle - \frac{1}{2} \sum\limits_{k} \langle\L_{b_k}c_2,\L_{b_k}\phi\rangle\right)dr
    \\
    &\hspace{3cm}
    - \sum\limits_{k}\int_0^t\langle c_2, \L_{b_k}\phi\rangle d{W}_r^k, 
    \\
    \langle u(t),\varphi\rangle& =\langle u(0),\varphi\rangle+ \int_0^t\left(-\langle\nu \nabla u, \nabla \varphi\rangle +\langle u\cdot \nabla \varphi, u\rangle - \langle\rho \nabla \Phi,\varphi\rangle  -\frac{1}{2} \sum\limits_{k} \langle \L_{\theta_k}u , \L_{\theta_k}\varphi\rangle\right)dr
    \\
    &\hspace{3cm}-\sum\limits_{k}\int_0^t \langle u,\L_{\theta_k}\varphi\rangle d{W}_r^k, 
    \\
    \nabla\cdot u &=0,
    \\
    -\Delta \Phi &= \rho = c_1 - c_2,
    \end{split}
\end{align}

hold $\mathbb{P}$-a.s.. 
\end{definition}

\subsection{Main results}
Our first result is on the global well-posedness of the stochastic NPNS system \eqref{e.w04091} driven by a general transport noise.   

\begin{theorem}[Global well-posedness]
\label{t.w04092}
Let $(u_0,c_1(0),c_2(0)) \in L^2\left(\Omega; H\times L^2\times L^2\right)$ be $\Fc_0$-measurable random variables such that $c_1(0),c_2(0)\geq 0$ $\mathbb P$-a.s.. Assume $b, \theta\in L_2(\Uc, L^\infty)$. Then there exists a unique global pathwise solution to the system \eqref{e.w04091} in the sense of Definition \ref{def:pathwise.sol}.
\end{theorem}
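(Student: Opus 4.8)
The strategy I would follow is the standard cut-off/a priori estimate/extension scheme adapted to the transport-noise setting, which is exactly the roadmap announced in the introduction. First, I would introduce a truncated (cut-off) system \eqref{sys:modified}: replace the nonlinear terms $u\cdot\nabla u$, $u\cdot\nabla c_i$, and the electric coupling $\rho\nabla\Phi$, $\nabla\cdot(c_i\nabla\Phi)$ by versions multiplied by a smooth cut-off $\chi_R\big(\|u\|_{L^2}+\|c_1\|_{L^2}+\|c_2\|_{L^2}+\cdots\big)$ which vanishes when the relevant norm exceeds $R$. For this globally Lipschitz (on bounded sets), linearly-growing system one proves, via Galerkin truncation with the projectors $P_N$, tightness of laws in the path space $C([0,T];H^{-1}\times H^{-1}\times H^{-1})\cap (L^2(0,T;H\times L^2\times L^2),\text{weak})$, passage to the limit (Skorokhod/Jakubowski), and hence existence of a global martingale solution; pathwise uniqueness for the cut-off system follows from an energy estimate on the difference of two solutions (the transport-noise terms, being skew-adjoint, contribute nothing dangerous after the Stratonovich-to-It\^o correction cancels against the It\^o term in the $L^2$ identity). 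By Gy\"ongy--Krylov, pathwise uniqueness plus existence of a martingale solution upgrades to a unique global pathwise solution of the cut-off system. All of this is the content of Section \ref{cut-off}.

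Second — and this is where the structural hypothesis $D_1=D_2=D$, $z_1=-z_2=1$ is essential — I would establish the \emph{a priori} estimates that let me remove the cut-off. The crucial preliminary is non-negativity of $c_1,c_2$: assuming $c_i(0)\ge 0$ a.s., one shows $c_i(t)\ge 0$ a.s.\ for all $t$. I would prove this (Section \ref{sec:nonneg}) by applying the general It\^o formula of \cite{kry2013} to a smooth convex approximation of $(c_i)_-^2$; the transport term $\L_{b_k}c_i$ contributes, after the Stratonovich correction, a term that vanishes on the negative part because $b_k$ is divergence-free, and the electric term has a favorable sign on $\{c_i<0\}$ by a maximum-principle-type argument, so the negative part stays zero. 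Given $c_i\ge 0$, mass is conserved: $\langle c_i(t),1\rangle=\langle c_i(0),1\rangle$ (the transport noise integrates to zero against the constant test function), so $\|c_i(t)\|_{L^1}=\|c_i(0)\|_{L^1}$ is a conserved, pathwise-bounded quantity. Then test the $c_1$ equation with $c_1$ and the $c_2$ equation with $c_2$, test the $u$ equation with $u$: in the combined $L^2$ energy balance the transport-noise terms cancel (skew-adjointness $+$ Stratonovich corrector), the coupling terms combine — using $\rho=c_1-c_2$ and the symmetry of the valences — into a single non-positive (dissipative) contribution $-D\int(\nabla c_1-\nabla c_2)\cdot\nabla\Phi\,\rho\cdots$ controlled after integration by parts by $\|\rho\|_{L^2}^2$-type terms, which one closes using the $L^1$ bound together with interpolation and the 2D Ladyzhenskaya inequality $\|f\|_{L^2}^2\lesssim \|f\|_{L^1}\|f\|_{H^1}$. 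A Gronwall argument (in expectation, using the Burkholder--Davis--Gundy inequality to handle the stochastic integrals) then yields, for every $T$,
\[
\E\Big(\sup_{t\le T}\big(\|u\|_{L^2}^2+\|c_1\|_{L^2}^2+\|c_2\|_{L^2}^2\big)+\int_0^T\big(\|u\|_{V}^2+\|c_1\|_{H^1}^2+\|c_2\|_{H^1}^2\big)\dd t\Big)\le C(T,\text{data}),
\]
with a bound independent of the cut-off level $R$.

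Third, I would remove the cut-off by the usual stopping-time argument: let $\tau_R$ be the first time the relevant norm of the cut-off solution reaches $R$; on $[0,\tau_R)$ the cut-off solution solves the original system \eqref{e.L112403}, and the uniform a priori bound forces $\tau_R\to\infty$ a.s.\ as $R\to\infty$ (since $\PP(\tau_R\le T)\le C(T)/R^2\to 0$), giving a global pathwise solution of \eqref{e.w04091}; its regularity \eqref{eq:solution.regularity} is inherited from the estimates, and pathwise uniqueness for \eqref{e.w04091} follows because any two solutions agree with a cut-off solution up to $\tau_R$ and then we let $R\to\infty$. I expect the \textbf{main obstacle} to be the closing of the nonlinear energy estimate in step two: the electric force $\rho\nabla\Phi$ is quadratic in the concentrations and only the $L^1$ bound (not an $L^2$ bound) is a priori conserved, so one must be careful to extract the full dissipative structure of the coupled system — this is precisely why the assumption of equal diffusivities and opposite unit valences is invoked, as it makes the cross terms collapse into a sign-definite expression — and to do the stochastic Gronwall estimate uniformly in $R$ while handling the (only logarithmically subcritical) 2D nonlinearities. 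The non-negativity proof via \cite{kry2013} is the other technically delicate point, since the standard It\^o formula does not apply to the non-smooth function $f\mapsto (f_-)^2$ at the $L^2$ level and one must justify the approximation carefully in the presence of the transport noise.
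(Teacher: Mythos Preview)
Your overall architecture matches the paper's exactly: cut-off system, Galerkin $\to$ tightness $\to$ Skorokhod for martingale existence, pathwise uniqueness and Yamada--Watanabe (you say Gy\"ongy--Krylov, which is fine), non-negativity via Krylov's It\^o formula, an a priori estimate, then stopping-time removal. What you over-engineer is the a priori step, and your anticipated ``main obstacle'' turns out not to be one.

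First, the paper truncates \emph{only} the electric couplings $\nabla\cdot(c_i\nabla\Phi)$ and $\rho\nabla\Phi$, with the cut-off depending solely on $\|c_1\|_{L^2}+\|c_2\|_{L^2}$; the advection terms $u\cdot\nabla u$, $u\cdot\nabla c_i$ need no truncation since they already vanish in the $L^2$ identities. Consequently, removing the cut-off requires only an a priori bound on $\|c_i\|_{L^2}$, not on $\|u\|_{L^2}$. Second --- and this is the main simplification you miss --- the a priori estimate is \emph{pathwise}, not in expectation: since $b_k,\theta_k$ are divergence-free one has $\langle c_i,\L_{b_k}c_i\rangle=\langle u,\L_{\theta_k}u\rangle=0$, so the It\^o formula for $\|c_i\|_{L^2}^2$ contains \emph{no martingale term whatsoever} (and the It\^o correction cancels the Stratonovich corrector exactly). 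BDG and stochastic Gronwall are therefore irrelevant. Third, the coupling is cleaner than you fear: summing the $c_1$ and $c_2$ balances and using $-\Delta\Phi=\rho$ gives
\[
\langle\nabla\cdot(c_1\nabla\Phi),c_1\rangle-\langle\nabla\cdot(c_2\nabla\Phi),c_2\rangle=-\tfrac12\int_{\mathbb T^2}\rho^2(c_1+c_2)\,dx\le 0
\]
immediately from $c_1,c_2\ge 0$. Hence $\|\c_1(t)\|_{L^2}^2+\|\c_2(t)\|_{L^2}^2\le e^{-2Dt}(\|\c_1(0)\|_{L^2}^2+\|\c_2(0)\|_{L^2}^2)$ pathwise, with no recourse to $L^1$ conservation, Ladyzhenskaya interpolation, or Gronwall; this single bound on the concentrations already forces $\tau_R\to\infty$ a.s. For non-negativity the paper applies Krylov's formula directly to $\|c_i^-\|_{L^2}^2$ in an auxiliary system where $c_i$ is replaced by $c_i^+$ inside $\nabla\cdot(c_i\nabla\Phi)$, then notes the resulting nonnegative martingale starting at zero must vanish --- essentially what you sketch, without the smooth approximation.
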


The second main result is on the enhanced dissipation by transport noise. Before stating the result we recall the special type of noise as in \cite{luo2023enhanced}.
Unlike \eqref{e.L112402} where the index $k\in\mathbb N$, here we
%In the settings of the noise as in \eqref{e.L112402}, 
let $\left\{\mathbf{e}_k, k\in\mathbb{Z}_0^2\right\}$ be a basis of the Hilbert space $\Uc$, and denote by 
$
    \mathbb{W}:=\sum_{k\in\mathbb{Z}_0^2} W^k \mathbf{e}_k,
$
where $\{W^k\}_{k\in\mathbb{Z}_0^2}$ is a sequence of independent standard complex Brownian motion satisfying 
\begin{align*}
    \overline{W^k} = W^{-k}, \quad [W^k, W^{l}]_t = 2t\delta_{k, -l}, \quad k,l \in \mathbb{Z}_0^2. 
\end{align*}
Assume that the coefficients $b, \theta$ of the noise have the following same structure:
\begin{align}\label{e.L122101}
    b = \theta = (\sqrt{2\kappa}\zeta_k\sigma_k)_{k\in\mathbb{Z}_0^2}\in \ell^2=\ell^2(\mathbb{Z}_0^2). 
\end{align}
Here $\kappa>0$ is the noise intensity, and 
\begin{align}\label{e.L112504}
    \zeta_k=\zeta_k^N = \frac{1}{\Lambda^N}\frac{1}{|k|^{\gamma}}\mathbf{1}_{N\leq |k| \leq 2N},
\end{align}
where $\gamma>0$ is some positive constant,  $N\in \mathbb{N}$, and $\Lambda_N$ is a normalizing constant to ensure $\|\zeta\|_{\ell^2}=1$ where $\zeta = (\zeta_k)\in\ell^2$. For more details on the choice of $\gamma$, $N$, and $\Lambda_N$, we refer the readers to \cite{luo2023enhanced}.
Furthermore $\{\sigma_k\}_{k\in\mathbb{Z}_0^2}$ is a sequence of divergence-free vector fields on $\mathbb{T}^2$ such that $\sigma_k = a_k e_k$ with $e_k$ being the usual complex basis of $L^2(\mathbb{T}^2,\mathbb{C})$ and 
\begin{align}\label{e.L112512}
    a_k = \begin{cases}
        \frac{k^{\perp}}{|k|}, & k\in\mathbb{Z}_+^2,\\
        -\frac{k^{\perp}}{|k|}, & k\in\mathbb{Z}_-^2,
    \end{cases}
\end{align}
where $\mathbb{Z}_0^2=\mathbb{Z}_+^2\cup \mathbb{Z}_-^2$ is a partition of $\mathbb{Z}_0^2$ such that $\mathbb{Z}_+^2=-\mathbb{Z}_-^2$. Let $\gamma_0>0$ be the constant from the Sobolev embedding
\[\|f\|_{L^{\infty}}\leq \gamma_0\|f\|_{H^2}\]
and for $i=1,2$, we denote the spatial average of the concentration $c_i$ by $\bar{c}_i$, which is preserved by the NPNS system. 
\begin{theorem}[Enhanced dissipation]
\label{t.w04093}
Given any $L, \lambda>0$ and $p\geq 1$, there exists a pair $(\kappa, \zeta)$ of the noise coefficient \eqref{e.L122101} such that for any $\Fc_0$-measurable initial data $(u_0,c_1(0),c_2(0)) \in L^2\left(\Omega; H\times L^2\times L^2\right)$ satisfying $c_1(0),c_2(0)\geq 0$, 
    $$\|c_1(0)-\bar{c}_1\|_{L^2}^2+\|c_2(0)-\bar{c}_2\|_{L^2}^2<\frac12\nu D\gamma_0^{-2},$$
    and $$\|u(0)\|_{L^2}+\|c_1(0)\|_{L^2}+\|c_2(0)\|_{L^2}\leq L,$$
the solution has $\lambda$ as the decay rate, i.e., 
\begin{align*}
    \|u(t)\|_{L^2}^2+\|c_1(t)-\bar{c}_1\|^2+\|c_2(t)-\bar{c}_2\|^2\leq C e^{-\lambda t}(\|u(0)\|_{L^2}^2+\|c_1(0)-\bar{c}_1\|^2+\|c_2(0)-\bar{c}_2\|^2),
\end{align*}
almost surely. Here $C>0$ is a random constant with finite $p$-th moment. 
\end{theorem}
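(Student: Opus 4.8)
The plan is to run a two-phase argument. In the first (deterministic-looking) phase, use the global well-posedness from Theorem~\ref{t.w04092} together with standard energy/entropy dissipation estimates to show that the $L^2$ deviations $\|u(t)\|_{L^2}^2 + \|c_1(t)-\bar c_1\|_{L^2}^2 + \|c_2(t)-\bar c_2\|_{L^2}^2$ enter any prescribed small ball around the equilibrium within a finite (deterministic) time $t_0 = t_0(L,\nu,D)$, and in fact, once inside the ball $\{\|c_1-\bar c_1\|^2 + \|c_2-\bar c_2\|^2 < \tfrac12\nu D\gamma_0^{-2}\}$, the solution stays there. The point of the smallness threshold $\tfrac12\nu D\gamma_0^{-2}$ is exactly that, by the Sobolev embedding $\|f\|_{L^\infty}\le\gamma_0\|f\|_{H^2}$ applied to $\Phi$ (so $\|\nabla\Phi\|_{L^\infty}\lesssim \gamma_0\|\rho\|_{L^2}$ after using $-\Delta\Phi=\rho$ and interpolation/elliptic regularity), the nonlinear electric-force contributions $\langle D c_i\nabla\Phi,\nabla c_i\rangle$ and $\langle\rho\nabla\Phi,u\rangle$ can be absorbed into the dissipative terms $D\|\nabla c_i\|_{L^2}^2$ and $\nu\|\nabla u\|_{L^2}^2$, yielding a closed differential inequality with a genuine (molecular) exponential decay and forward invariance of the small ball. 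This is where the coupled structure forces us to wait, unlike in \cite{luo2023enhanced}.

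In the second phase, restart the clock at $t_0$ and invoke the transport-noise enhanced-dissipation mechanism of \cite{luo2023enhanced}. With $b=\theta$ of the special form \eqref{e.L122101}--\eqref{e.L112512}, the Stratonovich-to-It\^o corrector $\tfrac12\sum_k\L_{b_k}^2$ (resp.\ $\tfrac12\sum_k\L_{\theta_k}^2$) contributes, after projecting onto each Fourier mode and averaging over the shell $N\le|k|\le 2N$, an extra dissipation that behaves like $\kappa\|\nabla\cdot\|_{L^2}^2$ up to lower-order terms controllable by choosing $\gamma$, $N$, $\Lambda_N$ appropriately; combined with the genuine diffusion this gives an effective dissipation coefficient that is arbitrarily large in $\kappa$. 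Applying It\^o's formula to $e^{\lambda t}\big(\|u(t)\|^2+\|c_1(t)-\bar c_1\|^2+\|c_2(t)-\bar c_2\|^2\big)$, the martingale part $\sum_k\int\langle\L_{b_k}c_i, c_i-\bar c_i\rangle dW^k$ etc.\ vanishes in expectation (since $\L_{b_k}$ is skew-adjoint, $\langle\L_{b_k}c_i,c_i\rangle=0$), the drift is controlled by $-(\text{effective dissipation}-\lambda)\|\cdot\|^2$ plus nonlinear terms already absorbed in phase one, so for $\kappa$ large enough the exponent is negative; a supermartingale/Gr\"onwall-type argument in $\omega$ then yields the pathwise bound with a random constant $C(\omega)$. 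For the finite $p$-th moment of $C$, control the supremum of the residual martingale via Burkholder--Davis--Gundy together with the quantitative spectral-gap estimates of \cite{luo2023enhanced} for the generator on the invariant subspace, exactly as there.

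The main obstacle I expect is making the phase-one ``wait time'' argument quantitative and uniform in $\omega$: the noise is present throughout, so one cannot literally treat phase one as deterministic. One must show that the transport noise, being skew-adjoint and (in phase one) not yet tuned large, does not destroy the entry-into-the-ball estimate — i.e., prove an a.s.\ (or in-expectation-then-upgrade) bound that the solution reaches the small ball by a time with suitable moments, and that the electric-force nonlinearity genuinely closes once inside. A secondary technical point is reconciling the two different noise indexings (the general $k\in\mathbb N$ of \eqref{e.L112402} used for well-posedness versus the structured $k\in\mathbb Z_0^2$ of \eqref{e.L122101}); one simply notes the structured noise is a special case satisfying $b,\theta\in L_2(\Uc,L^\infty)$, so Theorem~\ref{t.w04092} applies verbatim.
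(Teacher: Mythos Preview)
Your two-phase plan misidentifies where the enhanced dissipation actually lives, and as written the argument in phase two would not close. The crucial point is that if you apply It\^o's formula to $\|c_i\|_{L^2}^2$ (or to $e^{\lambda t}\|U\|_{L^2}^2$), the Stratonovich corrector and the It\^o quadratic variation cancel \emph{exactly}: since $\L_{b_k}^*=-\L_{b_k}$, one has $\langle c_i,\L_{b_k}^2 c_i\rangle=-\|\L_{b_k}c_i\|_{L^2}^2$, which wipes out the $+\sum_k\|\L_{b_k}c_i\|_{L^2}^2$ coming from $d[c_i]$. Hence $d\|c_i\|_{L^2}^2$ contains only the deterministic terms with molecular diffusivity $D$ (resp.\ $\nu$), and no $\kappa$ appears at all; this is exactly what Proposition~\ref{p.L110901} records. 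So the ``effective dissipation'' in your drift is just $\min\{\nu,2D-4\gamma_0^2\bar{\mathbf c}_0/\nu\}$, not anything involving $\kappa$, and you cannot beat the molecular rate this way.

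The paper's route is essentially different: one passes to the mild formulation with the \emph{enhanced} semigroups $Q_t=e^{(D+\kappa)t\Delta}$ and $P_t=e^{(\nu+\kappa/4)t\Delta}$ (so the extra $\kappa\Delta$ is visible at the operator level, not the energy level), and then proves a one-step contraction $\mathbb E\|U(n+1)\|_{L^2}^2\le\delta\,\mathbb E\|U(n)\|_{L^2}^2$ with $\delta$ as small as desired. The work is in showing that the three leftover pieces are all small: the semigroup of the initial datum gives $1/\kappa$; the Duhamel integrals of the nonlinear terms give $1/\kappa^2$ via the heat-kernel smoothing of Lemma~\ref{l.L112501} together with the a.s.\ energy inequality from Proposition~\ref{p.L110901}; the discrepancy $S_\zeta(u)-\tfrac{\kappa}{4}\Delta u$ is $O(\kappa/N)$ in $H^{-2}$ by the corrector estimate Theorem~\ref{corrector}; and, most importantly, the stochastic convolutions $Z_{n,t}$ are small because $\|\zeta\|_{\ell^\infty}\sim 1/N$ (Lemma~\ref{l.L112502}, taken from \cite{luo2023enhanced}). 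Once $\delta<e^{-2\lambda'}$ with $\lambda'>\lambda$, the pathwise statement and the $p$-th moment of the random constant follow from a conditional Markov/Borel--Cantelli argument on the events $A_n=\{\sup_{[n,n+1]}\|U\|_{L^2}^2>e^{-\lambda n}\|U(0)\|_{L^2}^2\}$, using that $\|U(t)\|_{L^2}^2$ is a.s.\ nonincreasing.

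A secondary issue: your phase one is unnecessary. The hypothesis $\|c_1(0)-\bar c_1\|_{L^2}^2+\|c_2(0)-\bar c_2\|_{L^2}^2<\tfrac12\nu D\gamma_0^{-2}$ already places the initial data in the small ball, and Proposition~\ref{p.L122601} (which holds a.s.\ and does not feel the noise, for the cancellation reason above) keeps it there for all time. There is no waiting time to estimate.
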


\section{Analysis of the cut-off system}\label{cut-off}

For notational convenience, throughout this section we denote $U=(u,c_1,c_2)$, and 
\begin{align}\label{e.L122301}
    \mch = H\times L^2\times L^2, \quad \mcv=V\times H^1\times H^1,
\end{align}
with the corresponding norm $\|\cdot\|_{L^2}$ for $\mch$ as 
% (or $U\in\mcv$)
\[\|U\|_{L^2}^2 = \|u\|_{L^2}^2+\|c_1\|_{L^2}^2+\|c_2\|_{L^2}^2, \, \text{ for } U\in\mch.\]
%\text{ or } \|U\|_{H^1} = \|u\|_{H^1}^2+\|c_1\|_{H^1}^2+\|c_2\|_{H^1}^2.
We also denote 
\begin{align}\label{e.L122303}
    \L_k = (\L_{\theta_k}, \L_{b_k}, \L_{b_k}), \, k\geq 1. 
\end{align}
Let the function $\tau_\eta\in C^\infty(\mathbb R)$ be a non-increasing cut-off function such that 
\begin{equation}\label{eqn:rho}
    \mathbbm{1}_{[0, \frac{\eta}2]} \leq \tau_\eta \leq \mathbbm{1}_{[0, \eta]}.
\end{equation}
For simplicity we use $\tau_\eta$ to represent $\tau_\eta(\|c_1\|_{L^2} +\|c_2\|_{L^2})$ below. 

In order to show the existence of solutions, we first consider the following cut-off system of \eqref{e.L112403}, 
\begin{align}\label{sys:modified}
    \begin{split}
    dc_1&=\left(D\Delta c_1 - u\cdot \nabla c_1  + D \tau_\eta\nabla \cdot (c_1 \nabla \Phi) + \frac{1}{2} \sum\limits_{k} \L_{b_k}^2c_1 \right)dt 
    + \sum\limits_{k}\L_{b_k}c_1d{W}_t^k, 
    \\
    dc_2&=\left(D\Delta c_2 -u\cdot \nabla c_2  - D \tau_\eta\nabla \cdot (c_2 \nabla \Phi) +  \frac{1}{2} \sum\limits_{k}\L_{b_k}^2c_2  \right)dt
    + \sum\limits_{k}\L_{b_k}c_2 d{W}_t^k,
    \\
    du&=\left(\nu \Delta u- \Pi\Big(  u\cdot \nabla u + \tau_{\eta}\rho \nabla \Phi\Big)  + \frac{1}{2} \sum\limits_{k}  \L_{\theta_k}^2u \right)dt+  \sum\limits_{k} \L_{\theta_k}u d{W}_t^k, 
    \\
    \nabla\cdot u &=0,
    \\
    -\Delta \Phi &= \rho = c_1 - c_2,
    \end{split}
\end{align}
The aim of this section is to show the following global well-posedness of this cut-off system.

\begin{theorem}\label{t.L122201}
For any $\mathcal{F}_0$-measurable initial data $U_0=\left(u_0, c_1(0), c_2(0)\right) \in L^2\left(\Omega,\mch\right)$, there is a unique global pathwise solution $U^{\eta}$ of the cut-off system \eqref{sys:modified}. 
\end{theorem}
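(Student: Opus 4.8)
The plan is to construct the solution via a Galerkin approximation combined with a stochastic compactness argument for the existence of martingale solutions, and then to upgrade to pathwise solutions using pathwise uniqueness and the Gy\"ongy--Krylov characterization. The cut-off $\tau_\eta$ is essential: it tames exactly the quadratic nonlinearities $c_i\nabla\Phi$ and $\rho\nabla\Phi$ that would otherwise obstruct global bounds, so that the only genuinely unbounded terms left are the Navier--Stokes transport $u\cdot\nabla u$ and the transport-noise operators $\L_k$. First I would project \eqref{sys:modified} onto the finite-dimensional space $P_N\mch$ and solve the resulting system of It\^o SDEs with locally Lipschitz coefficients, obtaining local solutions $U^{\eta,N}$. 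The key to globality at this stage, and to the compactness argument, is a uniform-in-$N$ energy estimate: apply It\^o's formula to $\|U^{\eta,N}\|_{L^2}^2$. The crucial cancellations are (i) $\langle u\cdot\nabla u, u\rangle = 0$ by incompressibility; (ii) the Stratonovich-to-It\^o corrector combines with the It\^o isometry term so that $\frac12\sum_k\langle\L_k^2 U, U\rangle + \frac12\sum_k\|\L_k U\|_{L^2}^2 = 0$ because the $\L_k$ are skew-adjoint (stated in the excerpt for $b_k,\theta_k$ divergence-free); and (iii) the coupling terms $\langle D\tau_\eta\nabla\cdot(c_1\nabla\Phi),c_1\rangle$ etc.\ and $\langle \tau_\eta\rho\nabla\Phi,u\rangle$ are controlled because $\tau_\eta$ restricts to the region $\|c_1\|_{L^2}+\|c_2\|_{L^2}\le\eta$, where $\|\nabla\Phi\|_{L^\infty}$-type quantities and hence the whole coupling is bounded by a constant times $\|U\|_{L^2}^2$ (after using Sobolev/elliptic estimates and Young's inequality, absorbing gradient terms into the dissipation $D\|\nabla c_i\|_{L^2}^2 + \nu\|\nabla u\|_{L^2}^2$). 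This yields, via Burkholder--Davis--Gundy and Gr\"onwall, a bound on $\E\sup_{t\le T}\|U^{\eta,N}(t)\|_{L^2}^2 + \E\int_0^T\|U^{\eta,N}\|_{\mcv}^2\,dt$ uniform in $N$, which in particular makes the Galerkin solutions global.

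Next I would establish tightness of the laws of $\{U^{\eta,N}\}$ on a suitable path space, say $L^2(0,T;\mch)\cap C([0,T];\mcv^*)$ weakly, using the uniform energy bound together with a fractional-in-time Sobolev estimate on the stochastic integral (standard: the It\^o term gives $C^\alpha$-in-time bounds in a negative Sobolev space for $\alpha<1/2$, the drift gives $W^{1,\cdot}$-in-time bounds) and the Aubin--Lions--Simon compactness lemma. Here the cut-off again pays off: each nonlinear term maps bounded sets of $\mcv$-valued paths into bounded sets in $L^2(0,T;\mch^*)$ with the $\tau_\eta$ factor providing the needed a priori control on $c_i\nabla\Phi$ and $\rho\nabla\Phi$. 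By Prokhorov and the Skorokhod representation theorem I pass to a new probability space with a.s.\ convergent copies $\tilde U^{\eta,N}\to\tilde U^\eta$, and identify the limit as a martingale solution of \eqref{sys:modified} by passing to the limit in the weak formulation; the only delicate passages are the nonlinear terms, where strong $L^2(0,T;\mch)$ convergence of $\tilde U^{\eta,N}$ and the boundedness enforced by $\tau_\eta$ (note $\tau_\eta$ is continuous in $\|c_1\|_{L^2}+\|c_2\|_{L^2}$, which converges a.s.) suffice, and the stochastic integral, handled by the standard martingale-identification argument (e.g.\ Bensoussan or Debussche--Glatt-Holtz--Temam).

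For pathwise uniqueness, I would take two solutions $U^{\eta},\hat U^{\eta}$ on the same stochastic basis with the same initial data, write the equation for the difference $V=U^\eta-\hat U^\eta$, and apply It\^o's formula to $\|V\|_{L^2}^2$. The transport-noise terms again cancel against their correctors by skew-adjointness, so the martingale part has the form $\sum_k\langle\L_k V,V\rangle\,dW^k = 0$ and there is no stochastic term to estimate at all; what remains is a deterministic-type differential inequality $\frac{d}{dt}\|V\|_{L^2}^2 + (\text{dissipation}) \le K(t)\|V\|_{L^2}^2$, where $K(t)$ is integrable thanks to $\int_0^T\|U^\eta\|_{\mcv}^2 + \|\hat U^\eta\|_{\mcv}^2\,dt<\infty$ and, critically, the Lipschitz-in-$L^2$ property of the cut-off nonlinearities: the map $(c_1,c_2)\mapsto\tau_\eta\, c_i\nabla\Phi$ is Lipschitz on $\mch$ because $\tau_\eta$ is smooth with bounded derivative and vanishes outside a bounded ball, and similarly for $\tau_\eta\rho\nabla\Phi$. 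Gr\"onwall then gives $V\equiv 0$. Finally, Theorem \ref{t.L122201} follows by the Gy\"ongy--Krylov lemma: martingale existence plus pathwise uniqueness implies existence of a pathwise solution, and pathwise uniqueness gives its uniqueness, with the regularity \eqref{eq:solution.regularity} inherited from the uniform Galerkin bounds plus the a.s.\ weak-to-strong continuity in $\mch$ obtained from the equation.

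The main obstacle I anticipate is organizing the energy estimate so that the cut-off coupling terms are genuinely absorbed: one must check that on the support of $\tau_\eta$, the product $c_i\nabla\Phi$ — with $\Phi$ solving $-\Delta\Phi=c_1-c_2$ — can be estimated by interpolation (e.g.\ $\|c_i\nabla\Phi\|_{L^2}\lesssim\|c_i\|_{L^4}\|\nabla\Phi\|_{L^4}\lesssim\|c_i\|_{L^2}^{1/2}\|\nabla c_i\|_{L^2}^{1/2}\cdot\|\rho\|_{L^2}^{1/2}\|\nabla\rho\|_{L^2}^{1/2}$ in 2D), so that after multiplying by the bounded factor $\tau_\eta$ and applying Young's inequality the top-order gradients land on the good side; the presence of the extra transport-noise corrector terms $\frac12\L_k^2$ must be handled simultaneously, and one should double-check that their cancellation survives the Leray projection in the velocity equation (it does, since $\Pi$ is self-adjoint and commutes with taking $L^2$ inner products against divergence-free test fields). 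A secondary technical point is the justification of It\^o's formula for $\|\cdot\|_{L^2}^2$ at the level of the limiting solution rather than only the Galerkin system, for which one invokes the regularity $U^\eta\in L^2(\Omega;C([0,T];\mch)\cap L^2(0,T;\mcv))$ and a standard density/approximation argument.
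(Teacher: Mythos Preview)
Your strategy mirrors the paper's exactly: Galerkin scheme with uniform energy bounds, tightness via Aubin--Lions plus fractional-in-time estimates on the stochastic integral, Skorokhod representation to produce a martingale solution, pathwise uniqueness by It\^o's formula on the difference (the martingale terms indeed vanish by skew-adjointness), and then an upgrade to a pathwise solution---the paper invokes Yamada--Watanabe by name rather than Gy\"ongy--Krylov, but the content is the same. One technical point you should not skip: the Vitali-type arguments needed to identify the Skorokhod limit require higher-moment bounds on the Galerkin sequence, so the paper first runs the whole martingale-existence/uniqueness machinery for $U_0\in L^4(\Omega,\mch)$ and only then handles general $U_0\in L^2(\Omega,\mch)$ by decomposing $\Omega$ into the $\mathcal F_0$-measurable shells $A_k=\{k\le\|U_0\|_{L^2}\le k+1\}$ and patching the resulting solutions $U_k^\eta\mathbf{1}_{A_k}$. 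Also worth noting: the basic $L^2$ energy balance for the Galerkin system is actually an \emph{almost-sure} identity (the stochastic integrals $\int\langle P_n\L_k U^n,U^n\rangle\,dW^k$ vanish identically), so BDG enters only for the fractional-time bounds used in the tightness step, not for the main energy estimate.
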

\begin{proof}
In view of the Proposition \ref{p.L122201} in Section \ref{s.L122201} on the existence of a martingale solution, and Proposition \ref{p.w05241} in Section \ref{s.L122202} on the pathwise uniqueness, we immediately obtain the existence of a unique global pathwise solution of \eqref{sys:modified} for initial data $U_0\in L^4(\Omega,\mch)$, by invoking the well-known Yamada-Watanabe theorem (see \cite{kurtz2007yamada} Theorem 3.14). To deal with initial data $U_0\in L^2(\Omega,\mch)$, for each $k\geq 0$ we set $A_{k}=\{\omega\in\Omega: k\leq \|U_0\|_{L^2}\leq k+1\}$. Then each $U_0\mathbf{1}_{A_k}\in L^4(\Omega,\mch)$ and hence we have a unique global pathwise solution $U_k^{\eta}$ to \eqref{sys:modified}. It is straightforward to check that 
\[U^{\eta} = \sum_{k}U_k^{\eta}\mathbf{1}_{A_k}\]
is a global pathwise solution of \eqref{sys:modified} with initial data $U_0\in L^2(\Omega,\mch)$. 
\end{proof}
Thus it remains to prove Proposition \ref{p.L122201} and Proposition \ref{p.w05241}. We first give the definition of a martingale solution to system \eqref{sys:modified}. 

\begin{definition}[Martingale solution]\label{def.L121901}
 Given any $\Fc_0$ measurable initial data $U_0=(u_0,c_1(0),c_2(0))\in L^2(\Omega,\mch)$, a triple $(\widetilde{\Sc},\widetilde{U}, \widetilde{\mathbb{W}})$ is called a global martingale solution of the cut-off system \eqref{sys:modified} with initial data $U_0$ if $\widetilde{\Sc} = \left(\widetilde{\Omega}, \widetilde{\Fc}, \widetilde{\mathcal{F}}_t, \widetilde{\Pb}\right)$ is a stochastic basis, $\widetilde{\mathbb{W}}=\sum_{k}\widetilde{W}^k\mathbf{e}_k$ is an $\widetilde{\mathcal{F}}_t$-adapted cylindrical Wiener process in $\Uc$, and for any $T>0$,
\begin{align*}
 \widetilde{U}=(\widetilde{u},\widetilde{c}_1, \widetilde{c}_2)\in L^2\left(\widetilde{\Omega}; C\left([0, T]; \mch\right) \cap L^2\left(0,T; \mcv\right)\right)    
\end{align*}
is  $\widetilde{\mathcal{F}}_t$-adapted such that the law of $\widetilde{U}(0)$  equals the law of $U_0$, and for all $t\in[0, T]$, any function $\phi\in H^1$, and any vector field $\varphi\in V$, the following identities 
\begin{align}\label{eq:solution.def}
    \begin{split}
    \langle \widetilde{c}_1(t), \phi\rangle &=\langle \widetilde{c}_1(0), \phi\rangle+\int_0^t\left(-\langle D\nabla  \widetilde{c}_1 - \widetilde{u} \widetilde{c}_1  +  D\widetilde{\tau}_{\eta} \widetilde{c}_1 \nabla \widetilde{\Phi},\nabla \phi\rangle - \frac{1}{2} \sum\limits_{k} \langle\L_{b_k}\widetilde{c}_1,\L_{b_k}\phi\rangle\right)ds 
    \\
    &\hspace{3cm}- \sum\limits_{k}\int_0^t\langle \widetilde{c}_1, \L_{b_k}\phi\rangle d\widetilde{W}_r^k, 
    \\
    \langle \widetilde{c}_2(t), \phi\rangle &=\langle \widetilde{c}_2(0), \phi\rangle+\int_0^t\left(-\langle D\nabla  \widetilde{c}_2 - \widetilde{u} \widetilde{c}_2  - D \widetilde{\tau}_{\eta}\widetilde{c}_2 \nabla \widetilde{\Phi},\nabla \phi\rangle - \frac{1}{2} \sum\limits_{k} \langle\L_{b_k}\widetilde{c}_2,\L_{b_k}\phi\rangle\right)ds 
    \\
    &\hspace{3cm}
    - \sum\limits_{k}\int_0^t\langle \widetilde{c}_2, \L_{b_k}\phi\rangle d\widetilde{W}_r^k, 
    \\
    \langle \widetilde{u}(t),\varphi\rangle& =\langle \widetilde{u}(0),\varphi\rangle+ \int_0^t\left(-\langle\nu \nabla \widetilde u, \nabla \varphi\rangle +\langle \widetilde{u}\cdot \nabla\varphi, \widetilde{u}\rangle -\langle \widetilde{\tau}_{\eta}\widetilde{\rho} \nabla \widetilde{\Phi},\varphi\rangle  -\frac{1}{2} \sum\limits_{k} \langle \L_{\theta_k}\widetilde{u} , \L_{\theta_k}\varphi\rangle\right)dt
    \\
    &\hspace{3cm}-  \sum\limits_{k} \int_0^t\langle \widetilde{u},\L_{\theta_k}\varphi\rangle d\widetilde{W}_r^k, 
    \\
    \nabla\cdot \widetilde{u} &=0,
    \\
    -\Delta \widetilde{\Phi} &= \widetilde{\rho} = \widetilde{c}_1 - \widetilde{c}_2,
    \end{split}
\end{align}
hold $\widetilde{\mathbb{P}}$-a.s., where 
$\widetilde{\tau}_{\eta}=(\|\widetilde{c_1}\|_{L^2}+ \|\widetilde{c_2}\|_{L^2}).$
\end{definition}
A major distinction between a martingale solution and a pathwise solution, as outlined in Definition \eqref{def:pathwise.sol}, lies in the incorporation of the stochastic basis and the driving noise into the solution itself in the case of martingale solutions.

\subsection{Uniform estimates of the Galerkin system and compactness}
This subsection is devoted to establishing the uniform estimates needed to pass to the limit in the following Galerkin approximation scheme for the cut-off system \eqref{sys:modified}: 
\begin{align}\label{sys:galerkin}
    \begin{split}
    dc_1^n&=D\Delta c_1^n +P_n\left(- u^n\cdot \nabla c_1^n  + D \tau_\eta^{n}\nabla \cdot (c_1^n \nabla \Phi^n) + \frac{1}{2} \sum\limits_{k} \L_{b_k}P_n \L_{b_k} c_1^n \right)dt 
    + \sum\limits_{k}P_n(\L_{b_k}c_1^n)d{W}_t^k, 
    \\
    dc_2^n&=D\Delta c_2^n+ P_n\left(-u^n\cdot \nabla c_2^n  - D \tau_\eta^{n}\nabla \cdot (c_2^n \nabla \Phi^n) +  \frac{1}{2} \sum\limits_{k}\L_{b_k}P_n \L_{b_k} c_2^n  \right)dt
    + \sum\limits_{k}P_n(\L_{b_k}c_2^n) d{W}_t^k,
    \\
    du^n&=\nu \Delta u^n + P_n\left(-\Pi\Big(  u^n\cdot \nabla u^n + \tau_\eta^{n}\rho^n \nabla \Phi^n\Big)  + \frac{1}{2} \sum\limits_{k}  \L_{\theta_k} P_n \L_{\theta_k} u^n \right)dt+  \sum\limits_{k}P_n(\L_{\theta_k}u^n)d{W}_t^k, 
    \\
    \nabla\cdot u^n &=0,
    \\
    -\Delta \Phi^n &= \rho^n = c_1^n - c_2^n,
    \end{split}
\end{align}
with initial data $U_0^n:=P_n(u_0,c_1(0),c_2(0))$ and 
\[\tau_\eta^{n}=\tau_\eta(\|c_1^n\|_{L^2} +\|c_2^n\|_{L^2}).\]
For the above stochastic differential equations, it is standard (see for example \cite{prevot2007concise}) that there exists a unique global pathwise solution  
\[U^n=(u^n,c_1^n,c_2^n)\in L^2\left(\Omega; C\left([0, T]; \mch\right) \cap L^2\left(0,T; \mcv\right)\right),  \text{ for any } T>0.\]
We next establish the conservation properties of the solutions. 
\begin{lemma}\label{lem:estimate}
For $p\geq 2$, suppose that $U(0)=\left(u_0, c_1(0), c_2(0)\right) \in L^p\left(\Omega ; \mch\right)$ is $\Fc_0$-measurable. Let $U^n=(u^n, c_1^n, c_2^n)$ be the solution to system \eqref{sys:galerkin}. Then the following conclusions hold.  
\begin{enumerate}[(I)]
	\item For any time $T>0$, 
 \begin{align}\label{eq.L121801}
     \sup\limits_{n\in \N}	 \Big[\sup\limits_{s\in[0,T]} \|U^n(s)\|_{L^2}^2 +  \int_0^T \|\nabla U^n(s)\|_{L^2}^2  dr \Big]  
     \leq C_{T,\eta}(1 +\|U(0)\|_{L^2}^2)
 \end{align}
almost surely, where $C_{T,\eta}>0$ is a deterministic constant depending on $T,\eta$;
    \item For  $\alpha\in [0,1/2)$, we have
    \begin{align}\label{eq.L122304}
    \mathbb E\left\|\int_0^{t} \sum\limits_{k} P_n(\L_k U^n) dW^k_r \right\|^p_{W^{\alpha,p}(0,T;\mcv^{-1})} \leq C_{p,T,\eta}(1+\mathbb E\|U(0)\|_{L^2}^p), 
\end{align} 
where $\mcv^{-1}$ is the dual space of $\mcv$ in \eqref{e.L122301} and the operator $\L_k$ is defined in \eqref{e.L122303}.  
    \item Furthermore, 
    \begin{align}\label{eq.L122305}
        \mathbb E\left\|U^n(t) - \int_0^{t} \sum\limits_{k} P_n(\L_k U^n) dW^k_r \right\|_{W^{1,2}(0,T;\mcv^{-1})}\leq C_{T,\eta}(1+\mathbb E\|U(0)\|_{L^2}^2). 
    \end{align}
\end{enumerate}
\end{lemma}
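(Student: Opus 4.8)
For part (I), the strategy is the standard energy method via the Itô formula applied to $\|u^n\|_{L^2}^2$, $\|c_1^n\|_{L^2}^2$, and $\|c_2^n\|_{L^2}^2$ on the finite-dimensional Galerkin system. The crucial observation is that the transport-noise terms contribute nothing to the $L^2$ energy balance: since $b_k,\theta_k$ are divergence-free, the Stratonovich-to-Itô corrector $\frac12\sum_k \L_{b_k}P_n\L_{b_k}$ exactly cancels the Itô-correction quadratic variation coming from $\sum_k P_n(\L_{b_k}c_i^n)\,dW^k_t$ (and similarly for $u^n$), using that the dual of $\L_{b_k}$ is $-\L_{b_k}$ and that $P_n$ is a self-adjoint projection. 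So the noise contributes only a martingale term. The viscous/diffusive terms give $-2\nu\|\nabla u^n\|_{L^2}^2 - 2D\|\nabla c_i^n\|_{L^2}^2$. The transport terms $\langle u^n\cdot\nabla u^n, u^n\rangle$ and $\langle u^n\cdot\nabla c_i^n, c_i^n\rangle$ vanish by incompressibility. The only terms requiring care are the Nernst–Planck coupling $D\tau_\eta^n\langle \nabla\cdot(c_i^n\nabla\Phi^n), c_i^n\rangle$ and the electric force $-\tau_\eta^n\langle\rho^n\nabla\Phi^n, u^n\rangle$; here the cutoff $\tau_\eta^n$ is essential because it is supported on $\{\|c_1^n\|_{L^2}+\|c_2^n\|_{L^2}\leq\eta\}$, so these nonlinear terms are bounded (after integration by parts, Hölder, and elliptic regularity $\|\nabla\Phi^n\|_{L^\infty}\lesssim\|\rho^n\|_{H^{1+}}$ or a gentler Ladyzhenskaya/Agmon estimate) by $\varepsilon(\|\nabla u^n\|_{L^2}^2+\|\nabla c_i^n\|_{L^2}^2)+C_\eta$, absorbing the gradients into the dissipation. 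This yields, after taking suprema and using the Burkholder–Davis–Gundy inequality on the martingale term, the bound \eqref{eq.L121801}; note the claim is an \emph{almost sure} pathwise bound with a \emph{deterministic} constant, which holds because the $\tau_\eta^n$-cutoff makes all the drift terms pathwise controllable — one does not even need BDG for the $L^2(\Omega)$ version, though BDG is convenient for the $p$-th moment form used later. The main subtlety here is simply verifying the noise cancellation carefully at the Galerkin level, where $P_n$ interlaces with $\L_{b_k}$.

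For part (II), I would use the Flandoli–Gatarek-type fractional-in-time regularity estimate for stochastic integrals: for a progressively measurable integrand $\Psi\in L^p(\Omega\times[0,T]; L_2(\Uc,\mcv^{-1}))$, one has
\begin{align*}
\mathbb E\left\|\int_0^t \Psi\, d\mathbb W_r\right\|_{W^{\alpha,p}(0,T;\mcv^{-1})}^p \leq C_{p,\alpha,T}\, \mathbb E\int_0^T \|\Psi\|_{L_2(\Uc,\mcv^{-1})}^p\, dr
\end{align*}
valid for $\alpha<1/2$ (this is standard, e.g.\ via the Kolmogorov continuity criterion in Nikolskii/Sobolev–Slobodeckij form). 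Here $\Psi_k = P_n(\L_k U^n)$, and since $\L_{b_k},\L_{\theta_k}$ are first-order operators with $L^\infty$ coefficients, $\|P_n(\L_k U^n)\|_{\mcv^{-1}}\lesssim\|\L_k U^n\|_{L^2}\lesssim\|b_k\|_{L^\infty}\|\nabla U^n\|_{L^2}$ (and similarly with $\theta_k$), so $\sum_k\|\Psi_k\|_{\mcv^{-1}}^2\lesssim(\|b\|_{L_2(\Uc,L^\infty)}^2+\|\theta\|_{L_2(\Uc,L^\infty)}^2)\|\nabla U^n\|_{L^2}^2$; one can actually gain a bit by landing in $\mcv^{-1}$ rather than $\mch$, which accommodates the derivative, but even the crude bound suffices after integrating in time against \eqref{eq.L121801}. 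Then $\mathbb E\int_0^T\|\nabla U^n\|_{L^2}^p\,dr \lesssim T\,\mathbb E[\sup_{[0,T]}\|U^n\|_{L^2}^{p-2}\int_0^T\|\nabla U^n\|_{L^2}^2\,dr]$, combined with the $p$-th-moment version of (I) (re-running the energy estimate with $\|\cdot\|_{L^2}^p$ and BDG, which again works because the cutoff keeps the drift controlled), gives \eqref{eq.L122304}.

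For part (III), write $Y^n(t) := U^n(t) - \int_0^t\sum_k P_n(\L_k U^n)\,dW^k_r$, which is exactly the drift part of the Galerkin equation: $Y^n(t) = U(0)^n + \int_0^t \Fc^n(r)\,dr$ where $\Fc^n$ collects the viscous/diffusive term, the (projected) advection and Nernst–Planck coupling, and the Stratonovich–Itô corrector. Thus $\partial_t Y^n = \Fc^n\in L^2(0,T;\mcv^{-1})$ pathwise, so $Y^n\in W^{1,2}(0,T;\mcv^{-1})$ with $\|Y^n\|_{W^{1,2}(0,T;\mcv^{-1})}\lesssim\|U(0)\|_{L^2} + \|\Fc^n\|_{L^2(0,T;\mcv^{-1})}$. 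Each piece of $\Fc^n$ is estimated in $\mcv^{-1}$: the linear terms $\nu\Delta u^n$, $D\Delta c_i^n$ and the corrector $\L_k P_n \L_k$ are bounded by $\|\nabla U^n\|_{L^2}$ (a second-order operator maps $\mcv\to\mcv^{-1}$ boundedly, and $\mcv^{-1}$ has room for it); the advection terms $u^n\cdot\nabla U^n = \nabla\cdot(u^n\otimes U^n)$ are bounded in $\mcv^{-1}$ by $\|u^n\otimes U^n\|_{L^2}\lesssim\|u^n\|_{L^4}\|U^n\|_{L^4}\lesssim\|U^n\|_{L^2}\|\nabla U^n\|_{L^2}$ via Ladyzhenskaya in 2D; and the cutoff electric terms $\tau_\eta^n c_i^n\nabla\Phi^n$, $\tau_\eta^n\rho^n\nabla\Phi^n$ are bounded using $\tau_\eta^n\neq 0\Rightarrow\|c_i^n\|_{L^2}\leq\eta$ together with $\|\nabla\Phi^n\|_{L^4}\lesssim\|\rho^n\|_{L^{4/3}}\lesssim\|\rho^n\|_{L^2}\leq\eta$, giving an $L^2$-in-space bound $\lesssim_\eta 1$. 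Squaring, integrating over $[0,T]$, and invoking \eqref{eq.L121801} then yields \eqref{eq.L122305}.

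\textbf{Expected main obstacle.} The genuinely delicate point throughout is the interaction of the Galerkin projection $P_n$ with the first-order transport operators $\L_{b_k},\L_{\theta_k}$: the exact energy cancellation in (I) relies on $P_n$ being self-adjoint and on grouping $\sum_k\langle P_n\L_{b_k}c_i^n, P_n\L_{b_k}c_i^n\rangle$ against the corrector $\sum_k\langle\L_{b_k}P_n\L_{b_k}c_i^n, c_i^n\rangle = -\sum_k\langle P_n\L_{b_k}c_i^n,\L_{b_k}c_i^n\rangle = -\sum_k\|P_n\L_{b_k}c_i^n\|_{L^2}^2$ (using the projection and the skew-adjointness of $\L_{b_k}$), so the two contributions cancel precisely — this must be written out carefully, as an incorrect placement of $P_n$ would leave an uncontrolled residue. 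The cutoff $\tau_\eta^n$ does the rest of the heavy lifting, reducing every nonlinearity to a pathwise-bounded term, which is why all constants in (I) and (III) can be taken deterministic; getting the $p$-th moment control in (II) is then a routine consequence of BDG plus the Flandoli–Gatarek stochastic-integral regularity lemma.
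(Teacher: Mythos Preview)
Your approach to parts (I) and (III) is essentially that of the paper, and correct. One sharpening for (I): the martingale term in the $L^2$ energy identity actually \emph{vanishes identically}, not merely contributes a mean-zero martingale. Since $c_i^n = P_n c_i^n$ and $\L_{b_k}$ is skew-adjoint, $\langle P_n\L_{b_k}c_i^n, c_i^n\rangle = \langle \L_{b_k}c_i^n, c_i^n\rangle = 0$, and likewise for $u^n$. This is precisely why \eqref{eq.L121801} is a pathwise (almost sure) bound with a deterministic constant; no BDG step is needed. The paper then obtains the $p$-th moment bound simply by raising the pathwise inequality to the power $p/2$ and taking expectation, rather than re-running It\^o on $\|\cdot\|_{L^2}^p$.

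There is, however, a genuine gap in your argument for (II). Your ``crude bound'' $\|P_n(\L_k U^n)\|_{\mcv^{-1}} \lesssim \|\L_k U^n\|_{L^2} \lesssim (\|b_k\|_{L^\infty}+\|\theta_k\|_{L^\infty})\|\nabla U^n\|_{L^2}$ forces you to control $\mathbb E\int_0^T\|\nabla U^n\|_{L^2}^p\,dr$, and the inequality you write for this,
\[
\mathbb E\int_0^T\|\nabla U^n\|_{L^2}^p\,dr \lesssim T\,\mathbb E\Big[\sup_{[0,T]}\|U^n\|_{L^2}^{p-2}\int_0^T\|\nabla U^n\|_{L^2}^2\,dr\Big],
\]
is false for $p>2$: there is no pointwise bound $\|\nabla U^n(r)\|^p \lesssim \|U^n(r)\|^{p-2}\|\nabla U^n(r)\|^2$, and energy estimates control only $\int_0^T\|\nabla U^n\|^2\,dr$ (or its $p/2$-th moment), never $\int_0^T\|\nabla U^n\|^p\,dr$. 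The paper avoids this by actually exploiting the $\mcv^{-1}$ gain you mention in passing: since $\L_{b_k}$ is first-order skew-adjoint with $L^\infty$ coefficients, $\|\L_{b_k}c_i^n\|_{H^{-1}} \leq \|b_k\|_{L^\infty}\|c_i^n\|_{L^2}$, whence $\sum_k\|P_n\L_k U^n\|_{\mcv^{-1}}^2 \lesssim \|U^n\|_{L^2}^2$. Raising the pathwise bound from (I) to the $p$-th power and integrating in time gives $\mathbb E\int_0^T\|U^n\|_{L^2}^p\,dr \leq C_{p,T,\eta}(1+\mathbb E\|U(0)\|_{L^2}^p)$, and the Flandoli--Gatarek estimate closes directly. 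So the sharper $\mcv^{-1}$ bound is not a convenience but the essential step that makes (II) work.
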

\begin{proof}
{\bf (I)}
Applying It\^o's formula to the equations of $c^n_1, c^n_2, u^n$ and using the divergence-free property of $u$, $b$ and $\theta$, we obtain
\begin{align}
    &\|c_1^n(t)\|_{L^2}^2 + 2D \int_0^t \|\nabla c_1^n(r)\|_{L^2}^2 dr = \|c^n_1(0)\|_{L^2}^2 + 2D \int_0^t\tau_\eta^n \langle \nabla\cdot (c_1^n \nabla \Phi^n), c_1^n\rangle dr, \label{eqn:wp-1}
    \\
    &\|c_2^n(t)\|_{L^2}^2 + 2D \int_0^t \|\nabla c_2^n(r)\|_{L^2}^2 dr = \|c^n_2(0)\|_{L^2}^2 -2D \int_0^t \tau_\eta^n\langle \nabla\cdot (c_2^n \nabla \Phi^n), c_2^n\rangle dr,\label{eqn:wp-2}
    \\
    &\|u^n(t)\|_{L^2}^2 + 2\nu \int_0^t \|\nabla u^n(r)\|_{L^2}^2 dr = \|u^n(0)\|_{L^2}^2 -2 \int_0^t\tau_\eta^n \langle \rho^n \nabla\Phi^n, u^n\rangle dr,\label{eqn:wp-3}
\end{align}
where we used the cancellation  
$$\langle \L_{b_k} P_n \L_{b_k} c_i^n, c_i^n\rangle  + \|P_n \L_{b_k}c_i^n\|_{L^2}^2 = 0, \quad \langle \L_{\theta_k} P_n \L_{\theta_k} u^n, u^n\rangle + \|P_n\L_{\theta_k} u^n\|_{L^2}^2=0,$$
since the dual of $\L_{b_k}$ and $\L_{\theta_k}$ are $-\L_{b_k}$ and $-\L_{\theta_k}$. 
For the nonlinear terms in \eqref{eqn:wp-1}, by H\"older, Ladyzhenskaya and Young's inequalities together with the standard elliptic estimates, we have 
\begin{align*}
    &\left|2D \int_0^t \tau_\eta^n\langle \nabla\cdot (c_1^n \nabla \Phi^n), c_1^n\rangle  dr \right|
    \\
    \leq & C \int_0^t\tau_\eta^n \|\nabla c_1^n\|_{L^2} \|c_1^n\|_{L^4} \|\nabla \Phi^n\|_{L^4} dr
    \\
    \leq & C \int_0^t \tau_\eta^n( \|\nabla c_1^n\|_{L^2}^{\frac32} \|c_1^n\|_{L^2}^{\frac12} + \|\nabla c_1^n\|_{L^2} \|c_1^n\|_{L^2})(\|c_1^n\|_{L^2} +\|c_2^n\|_{L^2}) dr
    \\
    \leq & D\int_0^T \|\nabla c_1^n\|_{L^2}^{2} dr + C_{T,\eta},
\end{align*}
where the last step we use the property of the cut-off function $\tau_\eta$, and $C_{T,\eta}$ is some deterministic constant depending on $T$ and $\eta$. One can get a similar estimate for the nonlinear term in \eqref{eqn:wp-2} and conclude that
\begin{align}\label{eqn:wp-est-1}
    \sum\limits_{i=1}^2 \|c_i^n(t)\|_{L^2}^2 + D \sum\limits_{i=1}^2 \int_0^t \|\nabla c_i^n(r)\|_{L^2}^2 dr \leq \sum\limits_{i=1}^2 \|c_i^n(0)\|_{L^2}^2 + C_{T,\eta} \leq \sum\limits_{i=1}^2 \|c_i(0)\|_{L^2}^2 + C_{T,\eta}
\end{align}
almost surely. Notice that $u$ has zero mean, one has
\begin{align*}
    \left|\int_0^t \tau_\eta^n\langle \rho^n \nabla\Phi^n , u^n\rangle dr \right| \leq  &2\int_0^t \tau_\eta^n\|\rho^n\|_{L^2} \|\nabla\Phi^n\|_{L^4} \|u^n\|_{L^4} dr 
    \\
    \leq &C \int_0^t\tau_\eta^n\|\rho^n\|^2_{L^2}  \|\nabla u^n\|_{L^2} dr
    \\
    \leq &C\int_0^t\tau_\eta^{n}(\|c_1^n\|^4_{L^2} + \|c_2^n\|^4_{L^2})dr + \nu \int_0^t\|\nabla u^n\|_{L^2}^2 dr
    \\
    \leq &  \nu \int_0^T\|\nabla u^n\|_{L^2}^2 dr + C_{T,\eta}.
\end{align*}
This together with \eqref{eqn:wp-3} yield that
\begin{align}\label{eqn:wp-est-2}
    \|u^n(t)\|_{L^2}^2 + \nu \int_0^t \|\nabla u^n(r)\|^2_{L^2} dr \leq \|u^n(0)\|^2_{L^2}  + C_{T,\eta} 
    \leq\|u_0\|^2_{L^2}  + C_{T,\eta}
\end{align}
almost surely. Combining \eqref{eqn:wp-est-1} and \eqref{eqn:wp-est-2} we conclude result (I).

{\bf (II)} By raising to the power $p$, \eqref{eqn:wp-est-1} and \eqref{eqn:wp-est-2} also yield that
\begin{align}\label{eq.L122501}
    \|U^n\|_{L^2}^p\leq C_{p,T,\eta}(1+\|U(0)\|_{L^2}^p)
\end{align}
almost surely.
By taking the time integral from $0$ to $T$ and then taking the expectation, we obtain 
\begin{align}\label{eqn:wp-est-3}
    \mathbb E \left[\int_0^T 
    \|U^n(t)\|_{L^2}^pdt \right] \leq C_{p,T,\eta}(1+\mathbb E\|U(0)\|_{L^2}^p). 
\end{align}
By the Burkholder-Davis-Gundy inequality \cite{flandoli1995martingale}, we have
\begin{align*}
    \mathbb E\left\|\int_0^t \sum\limits_{k} P_n(\L_k U^n) dW^k_r \right\|^p_{W^{\alpha,p}(0,T;H^{-1})}
    \leq &C \mathbb E \int_0^T \left(\sum\limits_{k}\|P_n(\L_kU^n)\|_{H^{-1}}^2\right)^{p/2} dt 
    \\
    \leq &C \mathbb E \int_0^T \left(\sum\limits_{k}\left(\|\theta_k\|^2_{L^{\infty}}+2\|b_k\|^2_{L^{\infty}}\right)\right)^{\frac p2} \|U^n\|_{L^2}^p dt. 
\end{align*} 
The conclusion (II) then follows from \eqref{eqn:wp-est-3} and the assumption that $\theta, b \in L_2(\Uc, L^\infty)$. 

{\bf (III)} Note that 
\begin{align*}
    &u^n(t) - \int_0^t \sum_{k} P_n(\L_{\theta_k} u^n) dW^k_r
    \\
    =&u^n(0) + \int_0^t \left(\nu \Delta u^n + P_n\left( -\Pi( u^n\cdot \nabla u^n + \tau_\eta^{n}\rho^n \nabla \Phi^n ) +  \frac12\sum_{k}\L_{\theta_k}^2 u^n \right) \right)dr.
\end{align*}
By Ladyzhenskaya inequality and standard elliptic estimates, we have
\begin{align}
    &\left\|u^n(t) - \int_0^t \sum_{k} P_n(\L_{\theta_k} u^n) dW^k_r\right\|_{W^{1,2}(0,T;H^{-1})} \nonumber
    \\
    \leq &\|u_0\|_{L^2} + C_T\int_0^T \left(\|\Delta u^n\|_{H^{-1}} + \|u^n\cdot \nabla u^n\|_{H^{-1}} + \|\tau_\eta^{n}\rho^n \nabla \Phi^n\|_{H^{-1}} + \frac12\sum_{k}\|\L_{\theta_k}^2 u^n\|_{H^{-1}} \right) dt \nonumber
    \\
    \leq &\|u_0\|_{L^2} + C_T\int_0^T \left(\|\nabla u^n\|_{L^2} + \|u^n\|_{L^2} \|\nabla u^n\|_{L^2} + C_{T,\eta}+ \sum_{k}\|\theta_k\|_{L^{\infty}}^2 \|\nabla u^n\|_{L^2}\right) dt\nonumber
    \\ \leq &C_{T,\eta}(1+\|U(0)\|_{L^2}^2),\label{eqn:wp-est-4}
\end{align}
where in the last step we used Cauchy–Schwarz inequality and estimate \eqref{eq.L121801}. 
For $i=1,2$, we can bound $c_i^n - \int_0^{\cdot} \sum_k P_n (\L_{b_k} c_i^n) dW^k_r$ similarly, and this concludes the proof of (III). 
\end{proof}

The above uniform-in-$n$ estimates enable us to show the tightness of  the sequence of laws of the solutions to the Galerkin scheme, whose limit provides a martingale solution to the cut-off system \eqref{sys:modified}. We next define the path space 
\begin{align}
    \mathcal{X} = \mch\times \left(L^2\left(0,T;\mch\right)\cap C\left([0, T]; H^{-2}\right)\right) \times C([0, T]; \Uc_0). 
\end{align}
Given any initial data $U_0\in L^2(\Omega, \mch)$, let $\mu_0^n$ be the law of $U_0^n=P_nU_0$ on $\mch$, and $\mu_U^n$ be the law of the corresponding solution $U^n$ of \eqref{sys:galerkin} on $L^2\left(0,T; \mch\right)\cap C\left([0, T]; H^{-2}\right)$, also $\mu_{\mathbb W}$ be the law of the cylindrical Wiener processes  $\mathbb{W}$ from \eqref{e.L112402} on $C([0, T], \Uc_0)$, and $\mu^n$ be their joint law in the path space $\mathcal{X}$.

\begin{proposition}\label{p.L122301}
    Let $p>2$. Given any initial data $U_0\in L^p(\Omega, \mch)$, the sequence of joint laws $\{\mu^n\}_{n\geq 1}$ is tight on the path space $\mathcal{X}$. 
\end{proposition}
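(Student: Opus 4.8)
The plan is to establish tightness of the joint laws $\{\mu^n\}$ on $\mathcal{X}$ by checking tightness of each marginal separately, since tightness of the marginals on the factors of a product space implies joint tightness. The marginal $\mu_{\mathbb W}$ is a single fixed law on $C([0,T];\Uc_0)$, hence trivially tight. The marginal $\mu_0^n$ is the law of $P_n U_0$ on $\mch$; since $P_n U_0 \to U_0$ in $\mch$ $\mathbb{P}$-a.s. (and in $L^p(\Omega;\mch)$ by dominated convergence using $\|P_nU_0\|_{L^2}\le\|U_0\|_{L^2}$), the sequence converges in law, hence is tight. The substantive work is the tightness of $\mu_U^n$ on $L^2(0,T;\mch)\cap C([0,T];H^{-2})$.

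For the $L^2(0,T;\mch)$ component I would invoke an Aubin--Lions--Simon compactness criterion: the set $\{V: \|V\|_{L^2(0,T;\mcv)}\le R,\ \|V\|_{W^{\alpha,p}(0,T;\mcv^{-1})}\le R\}$ is compactly embedded in $L^2(0,T;\mch)$ for $\alpha p>1$ (using $\mcv\hookrightarrow\mch\hookrightarrow\mcv^{-1}$ with the first embedding compact). Writing $U^n$ as the sum of its ``drift part'' $Y^n:=U^n-\int_0^\cdot\sum_k P_n(\L_k U^n)\,dW^k_r$ and its ``stochastic part'' $Z^n:=\int_0^\cdot\sum_k P_n(\L_k U^n)\,dW^k_r$, Lemma \ref{lem:estimate}(III) controls $Y^n$ in $W^{1,2}(0,T;\mcv^{-1})$ in expectation, and Lemma \ref{lem:estimate}(II) controls $Z^n$ in $W^{\alpha,p}(0,T;\mcv^{-1})$ in expectation for $\alpha\in[0,1/2)$; choosing $p>2$ and $\alpha\in(1/p,1/2)$ makes $\alpha p>1$, so both pieces lie in $W^{\alpha,p}(0,T;\mcv^{-1})$ with $p$-th moment bounds. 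Together with the bound on $\|U^n\|_{L^2(0,T;\mcv)}$ from \eqref{eq.L121801}, Markov's inequality gives, for any $\delta>0$, a bounded set in $L^2(0,T;\mcv)\cap W^{\alpha,p}(0,T;\mcv^{-1})$ carrying mass $\ge 1-\delta$ under $\mu_U^n$ uniformly in $n$; by Aubin--Lions--Simon this set is relatively compact in $L^2(0,T;\mch)$, yielding tightness there. For the $C([0,T];H^{-2})$ component, one uses that $Y^n$ is uniformly bounded (in expectation) in $W^{1,2}(0,T;\mcv^{-1})\hookrightarrow C^{0,1/2}([0,T];\mcv^{-1})$ and $Z^n$ in $W^{\alpha,p}(0,T;\mcv^{-1})\hookrightarrow C^{0,\alpha-1/p}([0,T];\mcv^{-1})$, so $U^n$ is uniformly bounded in a H\"older space $C^{0,\beta}([0,T];\mcv^{-1})$ for some $\beta>0$; combining the resulting equicontinuity with the uniform $L^\infty(0,T;\mch)$ bound and interpolation $\|\cdot\|_{H^{-2}}\lesssim\|\cdot\|_{L^2}^{\theta}\|\cdot\|_{\mcv^{-1}}^{1-\theta}$ (recalling $\mcv^{-1}=V^{-1}\times H^{-1}\times H^{-1}$ sits below $H^{-2}$ in the appropriate sense) shows the relevant sets are relatively compact in $C([0,T];H^{-2})$ by Arzel\`a--Ascoli.

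Finally I would assemble the pieces: given $\delta>0$, pick compact sets $K_0\subset\mch$, $K_U\subset L^2(0,T;\mch)\cap C([0,T];H^{-2})$, and $K_{\mathbb W}\subset C([0,T];\Uc_0)$ with $\mu_0^n(K_0)\ge1-\delta/3$, $\mu_U^n(K_U)\ge1-\delta/3$, $\mu_{\mathbb W}(K_{\mathbb W})\ge1-\delta/3$ uniformly in $n$; then $K_0\times K_U\times K_{\mathbb W}$ is compact in $\mathcal{X}$ and $\mu^n(K_0\times K_U\times K_{\mathbb W})\ge1-\delta$ for all $n$, which is exactly tightness of $\{\mu^n\}$. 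The main obstacle is the $L^2(0,T;\mch)$ tightness: one must be careful that the cutoff $\tau_\eta^n$ keeps all nonlinear terms controlled deterministically (which Lemma \ref{lem:estimate} already encodes), that the fractional regularity exponents satisfy $\alpha p>1$ (forcing the hypothesis $p>2$), and that the Galerkin projections $P_n$ do not spoil the compactness embeddings — the latter is handled since $\|P_n\|$ is uniformly bounded on every Sobolev scale and the a priori bounds of Lemma \ref{lem:estimate} are uniform in $n$. Verifying the precise form of the Aubin--Lions--Simon theorem applicable to fractional-in-time Sobolev spaces (e.g. the Flandoli--G\k{a}tarek version) is the one technical citation to pin down, but once that is in hand the argument is routine.
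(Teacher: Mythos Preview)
Your proposal is correct and follows essentially the same strategy as the paper: tightness of the three marginals separately, with the substantive work on $\mu_U^n$ handled via the drift/stochastic decomposition $U^n=Y^n+Z^n$, the uniform bounds of Lemma~\ref{lem:estimate}, and Aubin--Lions type compactness (the paper also cites the Flandoli--G\k{a}tarek version). One small point: in your $C([0,T];H^{-2})$ argument the interpolation step and the remark that $\mcv^{-1}$ ``sits below $H^{-2}$'' are confused---in fact $\mcv^{-1}\hookrightarrow H^{-2}$ continuously, so H\"older continuity in $\mcv^{-1}$ already gives equicontinuity in $H^{-2}$ directly, and the $L^\infty(0,T;\mch)$ bound plus the compact embedding $\mch\hookrightarrow H^{-2}$ supplies pointwise relative compactness; no interpolation is needed. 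The paper avoids Arzel\`a--Ascoli altogether by invoking the compact embeddings $W^{1,2}(0,T;\mcv^{-1})\hookrightarrow C([0,T];H^{-2})$ and $W^{\alpha,p}(0,T;\mcv^{-1})\hookrightarrow C([0,T];H^{-2})$ for $\alpha p>1$ directly, which is a cleaner packaging of the same idea.
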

\begin{proof}
Note that $\mu_{\mathbb W}$ is a single distribution, hence it is tight. Since $P_nU_0\to U_0$ in $L^2(\Omega, \mch)$, the convergence also holds in distribution. Hence the sequence $\mu_0^n$ is tight by Prokhorov's theorem. It remains to show the tightness of the sequence $\mu_U^n$. 

Given any $R>0$, let $B_R^1$ be the closed ball defined as
\[
B_R^1 := \left\{U\in L^2(0,T;\mcv)\cap W^{\frac14,2}(0,T;\mcv^{-1}): \|U^n\|_{L^2(0,T;\mcv)}^2+\|U^n\|_{W^{\frac14,2}(0,T;\mcv^{-1})}^2\leq R^2\right\}.
\]

Denote by $B_R^{2,1}$ the closed ball of radius $R$ in $W^{1,2}(0,T;\mcv^{-1})$ and $B_R^{2,2}$ the closed ball of radius $R$ in $W^{\alpha,p}(0,T;\mcv^{-1})$ for some $\alpha\in(0, \frac12)$ such that $\alpha p>1$, and denote $B_R^2 =B_R^{2,1}+ B_R^{2,2}$. Then by the Markov inequality and estimates \eqref{eq.L121801}, \eqref{eq.L122304} and \eqref{eq.L122305}  we have 
\begin{align*}
    \begin{split}
        \mu_U^n\left((B_R^1)^c\right) &= \mathbb{P}\left(\|U^n\|_{L^2(0,T;\mcv)}^2+\|U^n\|_{W^{\frac14,2}(0,T;\mcv^{-1})}^2\geq R^2 \right)\\
        &\leq \mathbb{P}\left(\|U^n\|_{L^2(0,T;\mcv)}\geq \frac{R}{\sqrt2}\right)+\mathbb{P}\left(\|U^n\|_{W^{\frac14,2}(0,T;\mcv^{-1})}\geq \frac{R}{\sqrt2}\right)\\
        &\leq \frac{C}{R}(1+\E\|U(0)\|_{L^2}^2),
    \end{split}
\end{align*}
and 
\begin{align*}
    \begin{split}
     \mu_U^n\left((B_R^2)^c\right) \leq& \mathbb{P}\left(\left\|U^n(t) - \int_0^{t} \sum\limits_{k} P_n(\L_k U^n) dW^k_r \right\|_{W^{1,2}(0,T;\mcv^{-1})}\geq R\right) \\
     &\qquad+ \mathbb{P}\left(\left\|\int_0^{t} \sum\limits_{k} P_n(\L_k U^n) dW^k_r \right\|^p_{W^{\alpha,p}(0,T;\mcv^{-1})}\geq R^p\right)\\
     &\leq \frac{C}{R}(1+\E\|U(0)\|_{L^2}^p).
    \end{split}
\end{align*}
By Aubin-Lions compactness theorem \cite{flandoli1995martingale}, we have that $L^2(0,T;\mcv)\cap W^{\frac14,2}(0,T;\mcv^{-1})$ is compactly embedded in $L^2(0,T;\mch)$, while both $W^{1,2}(0,T;\mcv^{-1})$ and $W^{\alpha,p}(0,T;\mcv^{-1})$ are compactly embedded in $C([0,T];H^{-2})$ since $\alpha p>1$. Hence $B_R=B_R^1\cap B_R^2$ is compact in $L^2(0,T;\mch)\cap C([0,T];H^{-2})$ for every $R>0$, and 
\begin{align*}
    \mu_U^n(B_R^c)\leq \mu_U^n\left((B_R^1)^c\right)+\mu_U^n\left((B_R^2)^c\right) \leq \frac{C}{R}(1+\E\|U(0)\|_{L^2}^p). 
\end{align*}
Therefore the sequence $\mu_U^n$ is tight on $L^2(0,T;\mch)\cap C([0,T];H^{-2})$. Thus the sequence of joint laws $\mu^n$ is tight on $\mathcal{X}$. 
\end{proof}

By Proposition \ref{p.L122301} and Prokhorov's theorem we readily obtain the existence of a subsequence $\mu^{n_j}$ of the sequence of joint laws $\mu^n$ that converges weakly to some probability measure $\mu$ on the path space $\mathcal{X}$.  Thus by the Skorokhod representation theorem \cite{billingsley2013convergence}, we have the following 

\begin{proposition}\label{p.L121801}
Let $p>2$ and the initial data $U_0\in L^p(\Omega, \mch)$.  Then there exists a probability space $(\widetilde{\Omega}, \widetilde{\Fc}, \widetilde{\Pb})$, a subsequence $n_j \to \infty$ and a sequence of $\Xc$-valued random variables $(\widetilde{U}_0^{n_j}, \widetilde{U}^{n_j},\widetilde{\mathbb W}^{n_j})$ with $(\widetilde{U}_0, \widetilde{U},\widetilde{\mathbb W})$ defined on $(\widetilde{\Omega}, \widetilde{\Fc}, \widetilde{\Pb})$ such that $\widetilde{\mathbb W}^{n_j}=\sum_k\widetilde{ W}^{k,n_j}\mathbf{e}_k$ and $\widetilde{\mathbb W}=\sum_k\widetilde{W}^k\mathbf{e}_k$ are cylindrical  Wiener processes and the following conclusions hold. 
\begin{enumerate}
    \item The law of $(\widetilde{U}_0, \widetilde{U},\widetilde{\mathbb W})$ is the limit measure $\mu$, and the law of $(\widetilde{U}_0^{n_j}, \widetilde{U}^{n_j},\widetilde{\mathbb W}^{n_j})$ coincides with $\mu^{n_j}$ for each $n_j$;
    \item $(\widetilde{U}_0^{n_j}, \widetilde{U}^{n_j},\widetilde{\mathbb W}^{n_j})$ converges $\widetilde{\Pb}$ almost surely  to $(\widetilde{U}_0, \widetilde{U},\widetilde{\mathbb W})$ in $\Xc$; 
    \item Each triple $(\widetilde{U}_0^{n_j}, \widetilde{U}^{n_j},\widetilde{\mathbb W}^{n_j})$ with $\widetilde{U}^{n_j}=(\widetilde{u}^{n_j}, \widetilde{c_1}^{n_j},\widetilde{c_2}^{n_j})$ being a martingale solution to the cut-off system \eqref{sys:galerkin} with $n=n_j$ in the sense of Definition \ref{def.L121901}.
    In particular, the solution $\tdu^{n_j}$ satisfies similar uniform estimates as $U^{n_j}$ in Lemma \ref{lem:estimate}.  
\end{enumerate}
\end{proposition}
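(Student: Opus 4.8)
The plan is to push the weak convergence $\mu^{n_j}\weakconv\mu$ (obtained from Proposition \ref{p.L122301} and Prokhorov's theorem, as quoted just above) through the Skorokhod representation theorem and then to identify the limit objects. Each of the three factors of the path space $\Xc$ — the Hilbert space $\mch$, the space $L^2(0,T;\mch)\cap C([0,T];H^{-2})$, and $C([0,T];\Uc_0)$ — is a separable metric space, so the classical Skorokhod representation theorem \cite{billingsley2013convergence} applies on $\Xc$ and produces $(\widetilde\Omega,\widetilde\Fc,\widetilde\Pb)$ together with $\Xc$-valued variables $(\widetilde U_0^{n_j},\widetilde U^{n_j},\widetilde{\mathbb W}^{n_j})$ and $(\widetilde U_0,\widetilde U,\widetilde{\mathbb W})$ having laws $\mu^{n_j}$ and $\mu$ and satisfying $(\widetilde U_0^{n_j},\widetilde U^{n_j},\widetilde{\mathbb W}^{n_j})\to(\widetilde U_0,\widetilde U,\widetilde{\mathbb W})$ $\widetilde\Pb$-a.s.\ in $\Xc$; the decompositions $\widetilde{\mathbb W}^{n_j}=\sum_k \widetilde W^{k,n_j}\mathbf e_k$ and $\widetilde{\mathbb W}=\sum_k \widetilde W^k\mathbf e_k$ come from reading off the coordinates. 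This gives conclusions (1) and (2). Since $\widetilde U^{n_j}$ and $U^{n_j}$ share a law, and both the regularity $U^{n_j}\in L^2(\Omega;C([0,T];\mch)\cap L^2(0,T;\mcv))$ and all the bounds of Lemma \ref{lem:estimate} (e.g.\ \eqref{eq.L121801}, \eqref{eq.L122304}, \eqref{eq.L122305}) depend only on the law, they pass verbatim to $\widetilde U^{n_j}$, giving the ``uniform estimates'' assertion of (3).

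Next I would reconstruct filtrations and verify the Wiener-process structure. Let $\widetilde\Fc_t^{n_j}$ be the usual augmentation of $\sigma\big(\widetilde U_0^{n_j},\widetilde U^{n_j}(s),\widetilde{\mathbb W}^{n_j}(s):0\le s\le t\big)$, and define $\widetilde\Fc_t$ analogously for the limiting triple. Because $\mathbb W$ in \eqref{e.L112402} is an $\Fc_t$-cylindrical Wiener process, equality of laws forces the post-$t$ increments of $\widetilde{\mathbb W}^{n_j}$ to be independent of $\widetilde\Fc_t^{n_j}$ with the correct Gaussian law; the standard martingale (Lévy-type) characterization then shows that $\widetilde{\mathbb W}^{n_j}$ is an $\widetilde\Fc_t^{n_j}$-cylindrical Wiener process on $\Uc$ with $\widetilde U^{n_j}$ adapted, and likewise for $\widetilde{\mathbb W}$, $\widetilde\Fc_t$.

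The substantive step is to show that each $(\widetilde\Omega,\widetilde\Fc,\widetilde\Fc_t^{n_j},\widetilde\Pb)$, $\widetilde U^{n_j}$, $\widetilde{\mathbb W}^{n_j}$ solves the Galerkin system \eqref{sys:galerkin} with $n=n_j$, which is conclusion (3). Here I would invoke the by-now-standard argument that an identity of laws upgrades to an identity of stochastic equations: for each fixed Fourier mode one forms the real-valued ``defect'' process obtained by moving every term of \eqref{sys:galerkin} (tested against that mode) to one side, checks that it is a measurable functional of the $\Xc$-valued path and that the It\^o-integral contribution coincides $\widetilde\Pb$-a.s.\ with the $L^2$-limit of its elementary approximations, and then concludes — via a Gy\"ongy–Krylov/Bensoussan-type lemma (cf.\ \cite{da2014stochastic}) — that the defect vanishes $\widetilde\Pb$-a.s.\ because it does on the original space. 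No passage to the limit in $n_j$ is needed for (3): the equation \eqref{sys:galerkin} lives in the fixed finite-dimensional space $P_{n_j}L^2$, so there are no compactness issues, and the limit triple $(\widetilde U_0,\widetilde U,\widetilde{\mathbb W})$ plays no role at this stage (it will only be used afterwards, when passing $n_j\to\infty$ to produce a martingale solution of the cut-off system \eqref{sys:modified}).

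The main obstacle is precisely this last transfer: although it is ``morally immediate'' from equality in law, making it rigorous requires the measurable-dependence bookkeeping for the stochastic integral and the check that the reconstructed filtration $\widetilde\Fc_t^{n_j}$ is compatible with it — the technical heart of the Gy\"ongy–Krylov/Bensoussan argument. Everything else (tightness, which is already Proposition \ref{p.L122301}; Prokhorov; Skorokhod; Lévy's characterization of the Wiener processes) is routine.
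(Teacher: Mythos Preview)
Your proposal is correct and follows the same route as the paper, which simply invokes Prokhorov's theorem on the tight family from Proposition~\ref{p.L122301} and then the Skorokhod representation theorem \cite{billingsley2013convergence}, treating the remaining verifications (filtration reconstruction, L\'evy characterization of the new Wiener processes, and the law-based transfer of the Galerkin identity and uniform bounds) as standard. You have spelled out exactly those details that the paper leaves implicit, so there is no divergence in approach.
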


\subsection{Global martingale solutions of the cut-off system}\label{s.L122201}

In Proposition \ref{p.L121801} we obtain a sequence of martingale solutions to the Galerkin scheme of the cut-off system with an almost-sure limit in the path space $\mathcal{X}$. This subsection is devoted to showing that this limit is a martingale solution to the cut-off system.

\begin{proposition}\label{p.L122201}
Let $p\geq 4$ and the initial data $U_0\in L^p(\Omega, \mch)$. The limit $(\widetilde{U}_0, \widetilde{U},\widetilde{\mathbb{W}})$  obtained in Proposition \ref{p.L121801} together with the stochastic basis $(\widetilde{\Omega}, \widetilde{\Fc}, \{\widetilde{\Fc}_t\}_{t \geq 0}, \widetilde{\Pb})$ is a global martingale solution of the cut-off system \eqref{sys:modified},  where $\widetilde{\Fc}_t$ is the completion of $\sigma(\widetilde{\mathbb{W}}(s),\tdu(s):s\leq t)$. 
\end{proposition}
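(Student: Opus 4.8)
The plan is to verify that the Skorokhod limit $(\widetilde U_0,\widetilde U,\widetilde{\mathbb W})$ together with its natural filtration satisfies all the requirements of Definition~\ref{def.L121901}, by passing to the limit $n_j\to\infty$ in the weak (integrated) formulation of the Galerkin system \eqref{sys:galerkin}. Since this is the standard ``identification of the limit'' step in the martingale solution method, I will organize it into a small number of routine verifications and one genuine convergence argument. First I would record the regularity of $\widetilde U$: by Proposition~\ref{p.L121801}(3) each $\widetilde U^{n_j}$ obeys the same uniform bounds as in Lemma~\ref{lem:estimate}, and by Fatou's lemma together with the $\widetilde{\mathbb P}$-a.s.\ convergence in $\mathcal X$, the limit $\widetilde U$ lies in $L^2\big(\widetilde\Omega;C([0,T];\mch)\cap L^2(0,T;\mcv)\big)$ (weak-$*$ lower semicontinuity of the norms on the reflexive spaces, plus an argument that the $C([0,T];\mch)$-valued path is recovered since we only have a priori $C([0,T];H^{-2})$ convergence — here one uses the uniform $L^2(\Omega;C([0,T];\mch))$ bound and a standard interpolation/weak-continuity argument to upgrade weak continuity in $H^{-2}$ to continuity in $\mch$). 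I would also note $\widetilde{\mathcal F}_t$-adaptedness is immediate from the definition of the filtration as the completion of $\sigma(\widetilde{\mathbb W}(s),\widetilde U(s):s\le t)$, and that the law of $\widetilde U(0)$ equals that of $U_0$ because the law of $\widetilde U_0^{n_j}$ is $\mu_0^{n_j}$, which converges to the law of $U_0$, while $\widetilde U^{n_j}(0)=\widetilde U_0^{n_j}\to\widetilde U_0=\widetilde U(0)$ a.s.

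Next I would pass to the limit term by term in the identities \eqref{eq:solution.def} written for $\widetilde U^{n_j}$. Fix $\phi\in H^1$, $\varphi\in V$ (first for $\phi,\varphi$ smooth so that $P_{n_j}\phi\to\phi$ in $H^1$ etc., then by density). The linear deterministic terms ($\langle\widetilde c_i(t),\phi\rangle$, $\langle D\nabla\widetilde c_i,\nabla\phi\rangle$, $\langle\nu\nabla\widetilde u,\nabla\varphi\rangle$, and the It\^o--Stratonovich corrector terms $\tfrac12\sum_k\langle\L_{b_k}\widetilde c_i,\L_{b_k}\phi\rangle$, $\tfrac12\sum_k\langle\L_{\theta_k}\widetilde u,\L_{\theta_k}\varphi\rangle$) converge using the strong $L^2(0,T;\mch)$ convergence of $\widetilde U^{n_j}$ for the zeroth-order pieces and the weak $L^2(0,T;\mcv)$ convergence for the gradient pieces; for the corrector sums one exploits the $\ell^2$-summability of $\|b_k\|_{L^\infty},\|\theta_k\|_{L^\infty}$ to commute the limit with the series. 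The nonlinear terms — $\langle\widetilde u\,\widetilde c_i,\nabla\phi\rangle$, $\langle\widetilde u\cdot\nabla\varphi,\widetilde u\rangle$, and the cut-off electromigration terms $\langle D\widetilde\tau_\eta^{\,n_j}\widetilde c_i\nabla\widetilde\Phi^{n_j},\nabla\phi\rangle$ and $\langle\widetilde\tau_\eta^{\,n_j}\widetilde\rho^{n_j}\nabla\widetilde\Phi^{n_j},\varphi\rangle$ — are handled by the strong $L^2(0,T;\mch)$ convergence: for the quadratic-in-$\widetilde U$ terms this gives convergence of products of an $L^2$-strongly convergent factor against an $L^\infty_tL^2_x$-bounded factor (tested against a fixed smooth function, so there is room to spare), and the scalar cut-off prefactor $\widetilde\tau_\eta^{\,n_j}=\tau_\eta(\|\widetilde c_1^{n_j}\|_{L^2}+\|\widetilde c_2^{n_j}\|_{L^2})$ converges a.e.\ in $(t,\omega)$ to $\widetilde\tau_\eta$ by continuity of $\tau_\eta$ and the $L^2(0,T;L^2)$ (hence, along a further subsequence, pointwise) convergence of the concentrations; the elliptic regularity $\|\nabla\widetilde\Phi^{n_j}\|_{L^4}\lesssim\|\widetilde\rho^{n_j}\|_{L^2}$ controls the remaining factor. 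To convert a.s.\ convergence of these time integrals into the identity I would combine uniform integrability (from the $p$-th moment bounds, $p\ge4$) with Vitali's convergence theorem to also get $L^1(\widetilde\Omega)$ convergence, so the limiting identity holds $\widetilde{\mathbb P}$-a.s.

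For the stochastic integrals $\sum_k\int_0^t\langle\widetilde c_i^{n_j},\L_{b_k}P_{n_j}\phi\rangle\,d\widetilde W^{k,n_j}_r$ and $\sum_k\int_0^t\langle\widetilde u^{n_j},\L_{\theta_k}P_{n_j}\varphi\rangle\,d\widetilde W^{k,n_j}_r$ I would use the now-classical stochastic-integral convergence lemma (as in Debussche--Glatt-Holtz--Temam / Bensoussan, or the formulation in \cite{flandoli1995martingale}): given that $\widetilde U^{n_j}\to\widetilde U$ in $L^2(0,T;\mch)$ $\widetilde{\mathbb P}$-a.s.\ (with the uniform moment bound providing uniform integrability of the relevant quadratic variations) and $\widetilde{\mathbb W}^{n_j}\to\widetilde{\mathbb W}$ in $C([0,T];\Uc_0)$, and that the integrands depend continuously on these data (the map $v\mapsto(\langle v,\L_{b_k}\phi\rangle)_k$ is bounded linear from $L^2$ to $\ell^2$ since $\|b_k\|_{L^\infty}\in\ell^2$), the stochastic integrals converge in probability to the corresponding integral against $\widetilde{\mathbb W}$; passing to a further subsequence gives a.s.\ convergence, which suffices since the limit is already identified. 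The main obstacle — and the step deserving the most care — is precisely this handling of the martingale terms together with the cut-off nonlinearity: one must check that the approximating integrands are progressively measurable with respect to the new filtration $\widetilde{\mathcal F}_t$ (this follows because $\widetilde U^{n_j},\widetilde{\mathbb W}^{n_j}$ have the same joint law as the original Galerkin pair, which is adapted, and the Skorokhod space carries this structure through), that $\widetilde{\mathbb W}$ is a genuine $\widetilde{\mathcal F}_t$-cylindrical Wiener process (martingale characterization: check $\widetilde{\mathbb W}$ and $\widetilde W^k\widetilde W^l-\delta_{kl}t$ are $\widetilde{\mathcal F}_t$-martingales by passing to the limit in the corresponding identities for $\widetilde{\mathbb W}^{n_j}$, again using uniform integrability), and that the projections $P_{n_j}$ disappear in the limit — here one writes $\L_{b_k}P_{n_j}\phi=\L_{b_k}\phi+\L_{b_k}(P_{n_j}-\mathrm{Id})\phi$ and controls the error by $\|b_k\|_{L^\infty}\|\nabla(P_{n_j}-\mathrm{Id})\phi\|_{L^2}$, summable in $k$ and vanishing as $n_j\to\infty$ for $\phi\in H^1$ (approximating a general $\phi\in H^1$ by smooth ones if needed). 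Assembling these pieces yields that $(\widetilde{\mathcal S},\widetilde U,\widetilde{\mathbb W})$ satisfies \eqref{eq:solution.def} for all $t\in[0,T]$ and all test functions, i.e.\ it is a global martingale solution, completing the proof.
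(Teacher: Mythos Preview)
Your proposal is correct and follows essentially the same route as the paper: establish the regularity of $\widetilde U$ via weak/weak-$*$ compactness and identification with the a.s.\ limit, pass to the limit term by term (linear, corrector, nonlinear with cut-off, using Lipschitz continuity of $\tau_\eta$ and elliptic estimates), invoke the stochastic-integral convergence lemma of Debussche--Glatt-Holtz--Temam for the martingale terms, and defer the $C([0,T];\mch)$ upgrade to the standard argument. The only cosmetic difference is that the paper obtains $\widetilde U\in L^2(\widetilde\Omega;L^2(0,T;\mcv)\cap L^\infty(0,T;\mch))$ directly by Banach--Alaoglu in the product space and then identifies the weak limits with $\widetilde U$ via the strong $L^2(\widetilde\Omega;L^\infty(0,T;H^{-2}))$ convergence, whereas your ``Fatou + weak-$*$ lsc'' phrasing is a slightly less precise shorthand for the same mechanism.
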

\begin{proof}
We first claim that 
\begin{align}\label{eq.L121902}
    \tdu \in L^2(\widetilde{\Omega};L^2(0,T; \mcv)\cap L^{\infty}(0,T; \mch)).
\end{align}
Estimate \eqref{eq.L121801} and the Banach–Alaoglu theorem imply the existence of $U_1\in L^2(\widetilde{\Omega};L^2(0,T; \mcv))$ and $U_2\in L^2(\widetilde{\Omega}; L^{\infty}(0,T; \mch))$ such that 
\begin{align}
    &\tdu^{n_j}\weakconv U_1 \text{ in } L^2(\widetilde{\Omega};L^2(0,T; \mcv)),\label{eq.L122503}
    \\
    &\tdu^{n_j}\weakconv^* U_2 \text{ in } L^2(\widetilde{\Omega};L^{\infty}(0,T; \mch)). \label{eq.L122502}
\end{align}

Next, Proposition \ref{p.L121801} implies that $\tdu^{n_j}\to \tdu$ almost surely in $C([0,T];H^{-2})$. As we have assumed $p\geq 4>2$, Lemma \ref{lem:estimate} together with the Vitali convergence theorem yield the strong convergence 
\begin{align}\label{eq.L122504}
    \tdu^{n_j}\to \tdu \, \text{ in } L^2(\widetilde{\Omega};L^{\infty}(0,T; H^{-2})).
\end{align}
In view of \eqref{eq.L122502}, this shows that $\tdu=U_2\in L^2(\widetilde{\Omega}; L^{\infty}(0,T; \mch))$. Also \eqref{eq.L122504} and \eqref{eq.L122503} imply that 
\[\widetilde{\E}\int_0^T\mathbf{1}_A\langle U_1,\varphi\rangle dr=\widetilde{\E}\int_0^T\mathbf{1}_A\langle \tdu,\varphi\rangle dr,\]
for any measurable set $A\subseteq [0,T]\times\widetilde{\Omega}$ and $\varphi\in H^2$. Thus $\tdu=U_1\in L^2(\widetilde{\Omega};L^2(0,T; \mcv))$ and the claim \eqref{eq.L121902} is proved. The proof then is divided into the following three steps.

{\it Step 1: Convergence of the linear and stochastic terms.} By Proposition \ref{p.L121801}, we know 
\[\tdu^{n_j}\to \tdu\, \text{ in } L^2(0,T;\mch) \,\, \text{and}\,\, \tdu^{n_j}(0)\to \tdu(0)\, \text{ in } \mch \quad a.s..\]
% Since we have assumed $p\geq 4$, by 
Then immediately we have
\begin{align*}
    \langle \tdu^{n_j}(0) ,\varphi\rangle \to \langle \tdu(0) ,\varphi\rangle \quad a.s..
\end{align*}
By \eqref{eqn:wp-est-3} and Proposition \ref{p.L121801}-(3), one has 
\begin{align*}
    \sup_{j}\widetilde\E\|\tdu^{n_j}\|_{L^2(0,T;\mch)}^p \leq C \sup_{j}\widetilde\E\int_0^T\|\tdu^{n_j}\|_{L^2}^p dr\leq C(1+\widetilde\E\|\tdu_0\|_{L^2}^p)<\infty. 
\end{align*}
As $p>2$, by Vitali convergence theorem, 
\[\tdu^{n_j}\to \tdu\, \text{ in } L^2(\widetilde{\Omega};L^2(0,T;\mch)).\]
Therefore passing to a subsequence if necessary, we conclude that 
\begin{align}\label{eqn:122501}
    \|\tdu^{n_j}-\tdu\|_{L^2}\to 0,
\end{align}
for almost all $(t,\omega)\in[0,T]\times\widetilde{\Omega},$ so is $\langle\tdu^{n_j}(t),\varphi\rangle \to \langle\tdu(t),\varphi\rangle$. 
In view of the weak convergence \eqref{eq.L122503}, we have 
\begin{align*}
    \widetilde{\E}\int_0^T\mathbf{1}_A\langle (\nu\nabla \widetilde{u}^{n_j},D\nabla \widetilde{c_1}^{n_j}, D\nabla \widetilde{c_2}^{n_j}),  \nabla \varphi\rangle ds \to     \widetilde{\E}\int_0^T\mathbf{1}_A\langle (\nu\nabla \widetilde{u},D\nabla \widetilde{c_1}, D\nabla \widetilde{c_2}),  \nabla \varphi\rangle ds 
\end{align*}
for every measurable $A\subseteq[0,T]\times\widetilde{\Omega}$ and $\varphi\in \mcv$.  
Recall the notation
$\L_k = (\L_{\theta_k}, \L_{b_k}, \L_{b_k})$, we can write 
\begin{align*}
&\frac12\sum_{k}\widetilde{\E}\int_0^T\mathbf{1}_A\langle P_{n_j}\L_k\tdu^{n_j}, \L_k\varphi \rangle ds  -    \frac12\sum_{k}\widetilde{\E}\int_0^T\mathbf{1}_A\langle\L_k\tdu, \L_k\varphi \rangle ds
\\
= &\frac12\sum_{k}\widetilde{\E}\int_0^T\mathbf{1}_A\langle P_{n_j} \L_k\tdu^{n_j}- \L_k\tdu^{n_j}, \L_k\varphi \rangle ds + \frac12\sum_{k}\widetilde{\E}\int_0^T\mathbf{1}_A\langle  \L_k\tdu^{n_j} - \L_k\tdu, \L_k\varphi \rangle ds.
\end{align*}
Thanks to the property of the projection $P_{n_j}$ and the uniform bound for $\tdu^{n_j}$ in $L^2(\Omega; L^2(0,T; \mcv))$, as $b,\theta\in L_2(\Uc,L^\infty)$, for the first term in the above identity, we have
\begin{align*}
   &\frac12\sum_{k}\widetilde{\E}\int_0^T\mathbf{1}_A\langle P_{n_j} \L_k\tdu^{n_j}- \L_k\tdu^{n_j}, \L_k\varphi \rangle ds = \frac12\sum_{k}\widetilde{\E}\int_0^T\mathbf{1}_A\langle  \L_k\tdu^{n_j}, P_{n_j}\L_k\varphi - \L_k\varphi \rangle ds
   \\
   \leq & C \left(\sup_{n_j} \|\tdu^{n_j}\|_{L^2(\Omega; L^2(0,T; \mcv))}\right) \|P_{n_j}\varphi - \varphi\|_{H^1} \to 0.
\end{align*}
Thanks to the weak convergence \eqref{eq.L122503}, the second term also converges to $0$. Thus,
\begin{align*}
    \frac12\sum_{k}\widetilde{\E}\int_0^T\mathbf{1}_A\langle P_{n_j}\L_k\tdu^{n_j}, \L_k\varphi \rangle ds \to \frac12\sum_{k}\widetilde{\E}\int_0^T\mathbf{1}_A\langle\L_k\tdu, \L_k\varphi \rangle ds.
\end{align*}

Next, as $b,\theta\in L_2(\Uc,L^\infty)$ and thanks to \eqref{eqn:122501} and the uniform bound for $\tdu^{n_j}$ in $L^2(\Omega; L^\infty(0,T; \mch))$, we have
\begin{align*}
    \sum_k|\langle \tdu^{n_j}-\tdu, \L_k\varphi\rangle|^2\leq C\|\nabla\varphi\|_{L^2}^2\|\tdu^{n_j}-\tdu\|_{\mch}^2 \to 0 
\end{align*}
and
\begin{align*}
    \sum_k|\langle \tdu^{n_j}, \L_k(P_{n_j}\varphi-\varphi)\rangle|^2 \leq C\|P_{n_j}\varphi-\varphi\|_{L^2} \to 0
\end{align*}
for almost every $(t,\omega)\in [0,T]\times \widetilde\Omega$.
Hence by denoting 
$\L = (\theta\cdot\nabla, b\cdot\nabla, b\cdot\nabla)$,
we have 
\begin{align*}
    \langle \tdu^{n_j}, \L P_{n_j}\varphi\rangle\to \langle \tdu, \L\varphi\rangle
\end{align*}
in probability in $L^2(0,T;L_2(\Uc,\mathbb{R}))$. In addition, one has $\widetilde{\mathbb{W}}^{n_j}\to \widetilde{\mathbb{W}}$ almost surely in $C\left(\left[0, T\right], \Uc_0\right)$ by Proposition \ref{p.L121801}. As a result, Lemma 2.1 from \cite{{debussche2011local}} implies that 
\begin{align*}
    \int_0^t \langle \tdu^{n_j}, \L\varphi\rangle d\widetilde{\mathbb{W}}_s^{n_j}\to     \int_0^t \langle \tdu, \L\varphi\rangle d\widetilde{\mathbb{W}}_s
\end{align*}
in probability in $L^2(0,T;\mathbb{R})$. An application of the BDG inequality and the Vitali convergence theorem show that the convergence occurs in $L^2(\widetilde{\Omega};L^2(0,T;\mathbb{R}))$. Therefore by passing to a subsequence if necessary the convergence in fact also holds 
for almost every $(t,\omega)\in[0,T]\times\widetilde{\Omega}$. 
 
{\it Step 2: Convergence of the nonlinear terms.} Fix any test function $\phi\in H^1$. 
First we prove that for $i=1,2$,
 \begin{align}\label{eq.L121901}
     \int_0^t\widetilde{\tau}_{\eta}^{n_j} \langle  \widetilde{c_i}^{n_j} \nabla \widetilde{\Phi}^{n_j}, \nabla\phi\rangle ds\to      \int_0^t\widetilde{\tau}_{\eta} \langle  \widetilde{c_i} \nabla \widetilde{\Phi}, \nabla\phi\rangle ds 
 \end{align}
for almost every $(t,\omega)\in[0,T]\times\widetilde{\Omega}$. We present the proof for $i=1$, and the case of $i=2$ is identical. In view of \eqref{eqn:rho}, one has
\begin{align*}
\begin{split}
    &\left|\int_0^t\widetilde{\tau}_{\eta}^{n_j} \langle  \widetilde{c_1}^{n_j} \nabla \widetilde{\Phi}^{n_j}, \nabla\phi\rangle ds - \int_0^t\widetilde{\tau}_{\eta} \langle  \widetilde{c_1} \nabla \widetilde{\Phi}, \nabla\phi\rangle ds \right|\\
    &\leq \int_0^t\left|\widetilde{\tau}_{\eta}^{n_j}-\widetilde{\tau}_{\eta}\right| |\langle  \widetilde{c_1}^{n_j} \nabla \widetilde{\Phi}^{n_j}, \nabla\phi\rangle|ds+\int_0^t\left| \langle  (\widetilde{c_1}^{n_j}-\widetilde{c_1}) \nabla \widetilde{\Phi}^{n_j}, \nabla\phi\rangle\right|ds+\int_0^t\left|\langle  \widetilde{c_1} \nabla (\widetilde{\Phi}^{n_j}-\widetilde{\Phi}), \nabla\phi\rangle\right|ds \\
    &:=I_1+I_2+I_3.
\end{split}
\end{align*}
The Lipschitz continuity for $\tau_{\eta}$ yields 
\[\left|\widetilde{\tau}_{\eta}^{n_j}-\widetilde{\tau}_{\eta}\right| = \left|\tau_{\eta}(\|\widetilde{c_1}^{n_j}\|_{L^2}+ \|\widetilde{c_2}^{n_j}\|_{L^2}) - \tau_{\eta}(\|\widetilde{c_1}\|_{L^2}+ \|\widetilde{c_2}\|_{L^2})\right|\leq C_\eta\|\tdu^{n_j}-\tdu\|_{\mch}.\]
Therefore by the Sobolev inequality, the H\"older inequality, the elliptic estimate, and \eqref{eq.L121801} with Proposition \ref{p.L121801}-(3), for almost all $(t,\omega) \in [0,T]\times \widetilde\Omega$ we have
\begin{align*}
    I_1&\leq \int_0^T\|\tdu^{n_j}-\tdu\|_{\mch}\|\widetilde{c_1}^{n_j}\|_{L^4}\|\nabla \widetilde{\Phi}^{n_j}\|_{L^4}\|\nabla \phi\|_{L^2}ds\\
    &\leq C \int_0^T\|\tdu^{n_j}-\tdu\|_{\mch}\|\widetilde{c_1}^{n_j}\|_{H^1}\| \widetilde{\rho}^{n_j}\|_{L^2}\|\nabla \phi\|_{L^2}ds\\
    &\leq C\sup_{s\in[0,T]}\|\tdu^{n_j}(s)\|_{L^2}\|\nabla \phi\|_{L^2}\int_0^T\|\tdu^{n_j}-\tdu\|_{\mch}\|\tdu^{n_j}\|_{\mcv}ds\\
    &\leq C (1+\|\tdu_0^{n_j}\|_{L^2})\|\nabla \phi\|_{L^2}\|\tdu^{n_j}-\tdu\|_{L^2(0,T;\mch)}\|\tdu^{n_j}\|_{L^2(0,T;\mcv)}\\
    &\leq C (1+\|\tdu_0^{n_j}\|_{L^2}^{2})\|\nabla \phi\|_{L^2}\|\tdu^{n_j}-\tdu\|_{L^2(0,T;\mch)}.
\end{align*}
Similarly, by the interpolation inequality we have 
\begin{align*}
    I_2&\leq \int_0^t\|\widetilde{c_1}^{n_j}-\widetilde{c_1}\|_{L^4}\|\nabla \widetilde{\Phi}^{n_j}\|_{L^4}\|\nabla \phi\|_{L^2}ds\\
    &\leq C\int_0^T\|\widetilde{c_1}^{n_j}-\widetilde{c_1}\|_{L^2}^{\frac12}\|\widetilde{c_1}^{n_j}-\widetilde{c_1}\|_{H^1}^{\frac12}\|\widetilde{\rho}^{n_j}\|_{L^2}\|\nabla \phi\|_{L^2}ds\\
    &\leq C (1+\|\tdu_0^{n_j}\|_{L^2})\|\nabla \phi\|_{L^2}\|\tdu^{n_j}-\tdu\|_{L^2(0,T;\mch)}^{\frac12}\|\tdu^{n_j}-\tdu\|_{L^2(0,T;\mcv)}^{\frac12},
\end{align*}
and 
\begin{align*}
    I_3&\leq \int_0^t\|\nabla\widetilde{\Phi}^{n_j}-\nabla \widetilde{\Phi}\|_{L^4}\|\widetilde{c_1}\|_{L^4}\|\nabla \phi\|_{L^2}ds\\
    &\leq C\int_0^T\|\widetilde{\rho}^{n_j}-\widetilde{\rho}\|_{L^2}\|\widetilde{c_1}\|_{H^1}\|\nabla \phi\|_{L^2}ds\\
    &\leq C \|\nabla \phi\|_{L^2}\|\tdu^{n_j}-\tdu\|_{L^2(0,T;\mch)}\|\widetilde{c_1}\|_{L^2(0,T;H^1)}\\
    &\leq C (1+\|\tdu_0^{n_j}\|_{L^2})\|\nabla \phi\|_{L^2}\|\tdu^{n_j}-\tdu\|_{L^2(0,T;\mch)}.
\end{align*}
The convergence of \eqref{eq.L121901} then follows from estimates for $I_1,I_2,I_3$ and the almost sure convergence of $\tdu^{n_j}$ to $\tdu$ in $L^2(0,T;\mch)$, as well as \eqref{eq.L121902}. 

In a similar fashion, we prove that for $i=1,2$,
\begin{align}\label{eq.L121903}
    \int_0^t\langle \widetilde{u}^{n_j}\cdot\nabla \phi, \widetilde{c_i}^{n_j}\rangle ds\to    \int_0^t\langle \widetilde{u}\cdot\nabla \phi, \widetilde{c_i}\rangle ds
\end{align}
for almost every $(t,\omega)\in[0,T]\times\widetilde{\Omega}$. Note that 
\begin{align*}
    &\left|\int_0^t\langle \widetilde{u}^{n_j}\cdot\nabla \phi, \widetilde{c_i}^{n_j}\rangle ds-\int_0^t\langle \widetilde{u}\cdot\nabla \phi, \widetilde{c_i}\rangle ds\right| \\
    &\leq \int_0^t\left|\langle (\widetilde{u}^{n_j}-\widetilde{u})\cdot\nabla \phi, \widetilde{c_i}^{n_j}\rangle\right|ds + \int_0^t\left|\langle \widetilde{u}\cdot\nabla \phi, \widetilde{c_i}^{n_j}-\widetilde{c_i}\rangle\right|ds\\
    &\leq  \int_0^t\|\widetilde{u}^{n_j}-\widetilde{u}\|_{L^4}\|\nabla \phi\|_{L^2}\|\widetilde{c_i}^{n_j}\|_{L^4}ds + \int_0^t\|\widetilde{u}\|_{L^4}\|\nabla \phi\|_{L^2}\|\widetilde{c_i}^{n_j}-\widetilde{c_i}\|_{L^4}ds\\
    &\leq C\|\nabla \phi\|_{L^2}\|\tdu^{n_j}-\tdu\|_{L^2(0,T;\mch)}^{\frac12}\|\tdu^{n_j}-\tdu\|_{L^2(0,T;\mcv)}^{\frac12}\left( \|\tdu^{n_j}\|_{L^2(0,T;\mcv)}+ \|\tdu\|_{L^2(0,T;\mcv)}\right).
\end{align*}
Hence the convergence \eqref{eq.L121903} follows again from the almost sure convergence of $\tdu^{n_j}$ to $\tdu$ in $L^2(0,T;\mch)$, estimate \eqref{eq.L121801} and \eqref{eq.L121902}. 

By the H\"older inequality and the Ladyzhenskaya inequality, and Young's inequality for products, one obtains 
\begin{align*}
    \left|\int_0^t\widetilde{\tau}_{\eta}^{n_j} \langle  \widetilde{c_i}^{n_j} \nabla \widetilde{\Phi}^{n_j}, \nabla\phi\rangle ds\right| &\leq  \|\nabla\phi\|_{L^2}\int_0^T\|\widetilde{c_i}^{n_j}\|_{L^4} \|\nabla \widetilde{\Phi}^{n_j}\|_{L^4} dt\\
    &\leq C\int_0^T\|\widetilde{c_i}^{n_j}\|_{L^2}^{1/2}\|\nabla\widetilde{c_i}^{n_j}\|_{L^2}^{1/2}\|\nabla \widetilde{\Phi}^{n_j}\|_{L^2}^{1/2}\|\widetilde{\rho}^{n_j}\|_{L^2}^{1/2}dt\\
    &\leq C\int_0^T\left(\|\tdu^{n_j}\|_{L^2}^2+\|\nabla \tdu^{n_j}\|_{L^2}^2\right)dt\\
    &\leq C(1+\|U(0)\|_{L^2}^2),
\end{align*}
which has a finite expectation. Hence Lebesgue's dominated convergence theorem implies that the convergence \eqref{eq.L121901} also holds in $L^1([0,T]\times\widetilde{\Omega})$. Similarly the convergence \eqref{eq.L121903} happens in $L^1([0,T]\times\widetilde{\Omega})$. 

We also have the convergence for the two nonlinear terms in the equations of $u$ for any test function $\phi\in V$: 
\begin{align*}
\begin{split}
    \int_0^t\widetilde{\tau}_{\eta}^{n_j} \langle  \widetilde{\rho}^{n_j} \nabla \widetilde{\Phi}^{n_j}, \phi\rangle ds&\to      \int_0^t\widetilde{\tau}_{\eta} \langle  \widetilde{\rho} \nabla \widetilde{\Phi}, \phi\rangle ds, \\
    \int_0^t\langle \widetilde{u}^{n_j}\cdot\nabla \phi, \widetilde{u}^{n_j}\rangle ds&\to    \int_0^t\langle \widetilde{u}\cdot\nabla \phi, \widetilde{u}\rangle ds,  
\end{split}
\end{align*}
in $L^1([0,T]\times\widetilde{\Omega})$. The proof is omitted since it is similar to the proof of \eqref{eq.L121901} and \eqref{eq.L121903}. 

{\it Step 3: Identify the limit $\tdu$ as a martingale solution.} For any $U=(u,c_1,c_2)$, any cylindrical Wiener process $\mathbb W$, and any $v=(\varphi,\phi_1,\phi_2)\in\mcv$, we denote

\begin{align*}
    &L(t,U,v)=-\int_0^t\langle (\nu\nabla u, D\nabla c_1, D\nabla c_2), \nabla v\rangle ds - \frac12\sum_{k}\int_0^t\langle \L_kU,\L_k v\rangle ds \\
    &S(t,U,v,\mathbb W)=-\int_0^t\langle U, \L  v\rangle d\mathbb W_s, \text{ where } \L = (\theta\cdot\nabla, b\cdot\nabla, b\cdot\nabla),\\
    &N(t,U,v) = \int_0^t \sum_{i=1}^2(\langle u\cdot\nabla \phi_i,c_i\rangle + (-1)^{i}\tau_{\eta}\langle Dc_i\nabla\Phi,\nabla\phi_i \rangle)ds 
   +\int_0^t\left(\langle u\cdot\nabla \varphi, u\rangle + \langle\rho\nabla\Phi,\varphi \rangle  \right)ds,  
\end{align*}
keeping in mind that $\tau_{\eta}=\tau_{\eta}(\|c_1\|_{L^2}+\|c_2\|_{L^2}),\rho=c_1-c_2$ and $-\Delta\Phi=\rho$. Proposition \ref{p.L121801}-(3) implies that $\tdu^{n_j},\widetilde{\mathbb W}^{n_j}$ satisfy
\begin{align*}
    \widetilde{\E}\int_0^T\mathbf{1}_A\langle\tdu^{n_j}(t),P_{n_j}v\rangle dt=\widetilde{\E}\int_0^T\mathbf{1}_A\langle\tdu^{n_j}_0,P_{n_j}v\rangle dt&+ \widetilde{\E}\int_0^T\mathbf{1}_A\left(L(t,\tdu^{n_j},P_{n_j}v) + N(t,\tdu^{n_j},P_{n_j}v)\right)dt \\
    &\qquad  +\widetilde{\E}\int_0^T \mathbf{1}_A S(t,\tdu^{n_j},P_{n_j}v,\widetilde{\mathbb W}^{n_j})dt,
\end{align*}
for every measurable $A\subseteq [0,T]\times\widetilde{\Omega}$. By taking limit $n_j\to\infty$, it follows from {\it Step 2 } that the limit $(\tdu_0,\tdu,\widetilde{\mathbb W})$ satisfies 
\begin{align*}
    \widetilde{\E}\int_0^T\mathbf{1}_A\langle\tdu(t),v\rangle dt=\widetilde{\E}\int_0^T\mathbf{1}_A\langle\tdu_0,v\rangle dt&+ \widetilde{\E}\int_0^T\mathbf{1}_A\left( L(t,\tdu,v) + N(t,\tdu,v)\right)dt \\
    &\qquad  +\widetilde{\E}\int_0^T \mathbf{1}_A S(t,\tdu,v,\widetilde{\mathbb W})dt,
\end{align*}
for every measurable set $A\subseteq [0,T]\times\widetilde{\Omega}$ and $v\in\mcv$. 

Therefore the limit $(\tdu_0,\tdu,\widetilde{\mathbb W})$ indeed satisfies the cut-off system \eqref{sys:modified} in the sense of \eqref{eq:solution.def}. 

It remains to show that $\tdu\in L^2(\widetilde{\Omega}; C([0,T]; \mch))$, and this follows from a similar argument as in Section 7.3 of \cite{debussche2011local}. Thus $\tdu$ is a martingale solution to the cut-off system \eqref{sys:modified}. 
\end{proof}
% After proving these two propositions, by applying suitable stopping time we can get local martingale solutions for \eqref{e.w04091}.

\subsection{Pathwise uniqueness of the cut-off system}\label{s.L122202}
In this section we prove the pathwise uniqueness of the cut-off system \eqref{sys:modified}.

\begin{proposition}
\label{p.w05241}
Let $u_0 \in L^p\left( \Omega,H \right)$ and $c_i(0)\in L^q\left( \Omega, L^2 \right)$ be $\Fc_0$ measurable with $p \geq 2$ and $q\geq 4$. Let $\Sc = \left(\Omega, \Fc, \Fct, \Pb \right)$ and $W$ be fixed. Assume that $\left(\Sc, W, u^{(1)}, c_1^{(1)}, c_2^{(1)}\right)$ and $\left(\Sc, W, u^{(2)}, c_1^{(2)}, c_2^{(2)}\right)$ are two global martingale solutions of the modified equation \eqref{sys:modified}. Denote by $\Omega_0 = \left\lbrace u_1(0) = u_2(0) \right\rbrace \wedge \left\lbrace c^{(1)}_1(0) = c^{(2)}_1(0) \right\rbrace \wedge \left\lbrace c^{(1)}_2(0) = c^{(2)}_2(0) \right\rbrace \subseteq \Omega$. Then,
\begin{equation}\label{eqn:uniqueness-result}
	\mathbb P \left( \left\lbrace \mathds{1}_{\Omega_0}\left(|u^{(1)}(t) - u^{(2)}(t)| + |c_1^{(1)}(t)-c_1^{(2)}(t)| +  |c_2^{(1)}(t)-c_2^{(2)}(t)| \right) = 0 \ \text{for all} \ t \geq 0 \right\rbrace \right) = 1.
\end{equation}
\end{proposition}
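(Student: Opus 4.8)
The plan is to derive an energy identity for the differences
\[
w = u^{(1)} - u^{(2)}, \qquad r_i = c_i^{(1)} - c_i^{(2)} \quad (i=1,2),
\]
by applying the It\^o formula to $\|w(t)\|_{L^2}^2 + \|r_1(t)\|_{L^2}^2 + \|r_2(t)\|_{L^2}^2$ on the event $\Omega_0$, and then to close a Gr\"onwall-type estimate. Subtracting the two copies of the cut-off system \eqref{sys:modified}, the transport-noise stochastic terms become $\sum_k \L_{b_k} r_i\, dW^k$ and $\sum_k \L_{\theta_k} w\, dW^k$, and the crucial point is that the Stratonovich-to-It\^o corrector $\tfrac12\sum_k\L_{b_k}^2$, $\tfrac12\sum_k\L_{\theta_k}^2$ exactly cancels the It\^o correction in the energy balance, using the divergence-free property of $b_k,\theta_k$ and the identities $\langle \L_{b_k}f,f\rangle=0$, $\langle \L_{\theta_k}g,g\rangle=0$ already recorded in the excerpt. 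Hence the martingale part has zero drift and, modulo a localization/stopping-time argument to make it a genuine martingale, it drops out upon taking expectations; the viscous and diffusive terms give the good negative contributions $-2\nu\|\nabla w\|_{L^2}^2 - 2D\|\nabla r_1\|_{L^2}^2 - 2D\|\nabla r_2\|_{L^2}^2$.

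The remaining work is to bound the difference of the nonlinear terms. There are three families: the Navier-Stokes transport term $u^{(1)}\cdot\nabla u^{(1)} - u^{(2)}\cdot\nabla u^{(2)}$, the Nernst-Planck drift $u^{(i)}\cdot\nabla c_j^{(i)}$ differences, and the electromigration/Lorentz terms involving $\tau_\eta^{(i)} c_j^{(i)}\nabla\Phi^{(i)}$ and $\tau_\eta^{(i)}\rho^{(i)}\nabla\Phi^{(i)}$. For the first two I would split $a^{(1)}\cdot\nabla b^{(1)} - a^{(2)}\cdot\nabla b^{(2)} = (a^{(1)}-a^{(2)})\cdot\nabla b^{(1)} + a^{(2)}\cdot\nabla(b^{(1)}-b^{(2)})$, use incompressibility to integrate by parts so the highest-order piece is absorbed, and estimate the rest by H\"older, Ladyzhenskaya ($\|f\|_{L^4}^2\lesssim\|f\|_{L^2}\|\nabla f\|_{L^2}$ in 2D) and Young's inequality, producing terms like $C\|\nabla b^{(1)}\|_{L^2}^2\|w\|_{L^2}^2$ plus a small multiple of $\|\nabla w\|_{L^2}^2$. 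For the electromigration terms I would further split off the difference of cut-offs using the Lipschitz bound $|\tau_\eta^{(1)}-\tau_\eta^{(2)}|\le C_\eta(\|r_1\|_{L^2}+\|r_2\|_{L^2})$ (exactly as in the estimate of $I_1$ in the proof of Proposition \ref{p.L122201}), the difference of concentrations directly, and the difference of potentials via the elliptic estimate $\|\nabla(\Phi^{(1)}-\Phi^{(2)})\|_{L^4}\lesssim \|r_1-r_2\|_{L^2}$; the cut-off factor $\tau_\eta$ keeps all the $L^2$ norms of the $c_i^{(m)}$ bounded by $\eta$, which is what makes these terms controllable. The upshot is a differential inequality
\[
\frac{d}{dt}\,\mathbb E\!\left[\|w\|_{L^2}^2+\|r_1\|_{L^2}^2+\|r_2\|_{L^2}^2\right]
\le \mathbb E\!\left[ G(t)\bigl(\|w\|_{L^2}^2+\|r_1\|_{L^2}^2+\|r_2\|_{L^2}^2\bigr)\right],
\]
where $G(t) = C_\eta\bigl(1 + \|\nabla u^{(1)}\|_{L^2}^2 + \|\nabla u^{(2)}\|_{L^2}^2 + \|\nabla c_1^{(i)}\|_{L^2}^2 + \|\nabla c_2^{(i)}\|_{L^2}^2 + \sum_k\|b_k\|_{L^\infty}^2 + \sum_k\|\theta_k\|_{L^\infty}^2\bigr)$ is, by the regularity $(u^{(m)},c_i^{(m)})\in L^2(0,T;\mcv)$ in Definition \ref{def.L121901} and the assumption $b,\theta\in L_2(\Uc,L^\infty)$, $\mathbb P$-a.s. integrable on $[0,T]$.

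To make this rigorous I would first run the computation with the stopping times $\sigma_R^{(m)} = \inf\{t: \|U^{(m)}(t)\|_{L^2}^2 + \int_0^t\|\nabla U^{(m)}\|_{L^2}^2\,ds \ge R\}$ and $\sigma_R = \sigma_R^{(1)}\wedge\sigma_R^{(2)}$, so that on $[0,\sigma_R]$ the stochastic integral is a true martingale and $\int_0^{t\wedge\sigma_R}G\,ds$ is bounded, apply the stochastic Gr\"onwall lemma (or simply take expectations and use the deterministic Gr\"onwall lemma pathwise after the localization), conclude $\mathbf{1}_{\Omega_0}\bigl(\|w(t\wedge\sigma_R)\|_{L^2}^2+\|r_1(t\wedge\sigma_R)\|_{L^2}^2+\|r_2(t\wedge\sigma_R)\|_{L^2}^2\bigr)=0$ a.s., and finally let $R\to\infty$ using $\sigma_R\to\infty$ a.s. (a consequence of the $C([0,T];\mch)\cap L^2(0,T;\mcv)$ regularity). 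Continuity of the trajectories in $\mch$ then upgrades "for a.e. $t$" to "for all $t\ge0$", giving \eqref{eqn:uniqueness-result}. I expect the main obstacle to be the bookkeeping for the electromigration terms: one must simultaneously handle the difference of the cut-off functions, the difference of concentrations, and the difference of electric fields, and arrange every estimate so that the only top-order contributions are small multiples of $\|\nabla w\|_{L^2}^2$, $\|\nabla r_1\|_{L^2}^2$, $\|\nabla r_2\|_{L^2}^2$ that can be absorbed into the dissipation — this is where the cut-off $\tau_\eta$ and the 2D Ladyzhenskaya inequality are both essential and where sign/valence cancellations (as in the a priori estimates) may need to be exploited.
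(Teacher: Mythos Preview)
Your proposal is correct and follows essentially the same route as the paper: subtract the two solutions, apply It\^o's formula to the $L^2$-energy of the differences, use the Stratonovich cancellation $\langle \L_{b_k}^2 r_i, r_i\rangle + \|\L_{b_k} r_i\|_{L^2}^2 = 0$ together with $\langle \L_{b_k} r_i, r_i\rangle = 0$, estimate the nonlinear differences by H\"older/Ladyzhenskaya/Young plus Lipschitzness of $\tau_\eta$ and elliptic bounds, localize via a stopping time, and conclude by Gr\"onwall. One small sharpening: because $\langle \L_{b_k} r_i, r_i\rangle$ and $\langle \L_{\theta_k} w, w\rangle$ vanish pointwise (divergence-free $b_k,\theta_k$), the stochastic integral is identically zero rather than merely a mean-zero martingale, so the paper runs Gr\"onwall pathwise without ever taking expectations---your expectation step is harmless but unnecessary.
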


\begin{proof}
    Let $u=u^{(1)}-u^{(2)}$, $c_1=c_1^{(1)}-c_1^{(2)}$, $c_2=c_2^{(1)}-c_2^{(2)}$, $\Phi=\Phi^{(1)}-\Phi^{(2)}$, $\rho=\rho^{(1)}-\rho^{(2)}$.
    Denote by the stopping times
    \[
     \tau^{(1)}_\eta = \tau_\eta(\|c_1^{(1)}\|_{L^2} + \|c_2^{(1)}\|_{L^2}), \quad \tau^{(2)}_\eta = \tau_\eta(\|c_1^{(2)}\|_{L^2} + \|c_2^{(2)}\|_{L^2}).
    \]
    It follows that the equations for $u$, $c_1$ and $c_2$ are
    \begin{align}\label{sys:modified-difference}
    \begin{split}
    dc_1&=\Big(D\Delta c_1 - u\cdot \nabla c^{(1)}_1 - u^{(2)} \cdot \nabla c_1 + D (\tau_\eta^{(1)} - \tau_\eta^{(2)})\nabla \cdot (c_1^{(1)} \nabla \Phi^{(1)}) + D \tau_\eta^{(2)} \nabla \cdot (c_1\nabla \Phi^{(1)}) 
    \\
    &\hspace{2cm}+ D \tau_\eta^{(2)} \nabla\cdot (c_1^{(2)} \nabla\Phi)  + \frac{1}{2} \sum\limits_{k} \L_{b_k}^2c_1 \Big)dt 
    + \sum\limits_{k}\L_{b_k}c_1d{W}_t^k, 
    \\
    dc_2&=\Big(D\Delta c_2 - u\cdot \nabla c^{(1)}_2 - u^{(2)} \cdot \nabla c_2 - D (\tau_\eta^{(1)} - \tau_\eta^{(2)})\nabla \cdot (c_2^{(1)} \nabla \Phi^{(1)}) - D \tau_\eta^{(2)} \nabla \cdot (c_2\nabla \Phi^{(1)}) 
    \\
    &\hspace{2cm}- D \tau_\eta^{(2)} \nabla\cdot (c_2^{(2)} \nabla\Phi)  + \frac{1}{2} \sum\limits_{k} \L_{b_k}^2c_2 \Big)dt 
    + \sum\limits_{k}\L_{b_k}c_2 d{W}_t^k, 
    \\
    du&=\Big(\nu \Delta u- \Pi\Big(  u\cdot \nabla u^{(1)} + u^{(2)}\cdot \nabla u + (\tau_{\eta}^{(1)}-\tau_\eta^{(2)})\rho^{(1)} \nabla \Phi^{(1)} + \tau_\eta^{(2)} \rho \nabla\Phi^{(1)} + \tau_\eta^{(2)}\rho^{(2)}\nabla\Phi\Big)  
    \\
    &\hspace{2cm}+ \frac{1}{2} \sum\limits_{k}  \L_{\theta_k}^2u \Big)dt +  \sum\limits_{k} \L_{\theta_k}u d{W}_t^k, 
    \end{split}
\end{align}
with $-\Delta \Phi = \rho = c_1 - c_2$ and $u(0)=u^{(1)}(0)-u^{(2)}(0)$, $c_1(0)=c_1^{(1)}(0)-c_1^{(2)}(0)$, $c_2(0)=c_2^{(1)}(0)-c_2^{(2)}(0)$. Let $\xi^n$ be the stopping time defined as 
\begin{align*}
    \xi^n = \inf \left\{ t\geq 0: \int_0^t \Big(1+ \|\nabla u^{(1)}\|_{L^2}^2 + \sum\limits_{i=1}^2\|\nabla c_i^{(1)}\|_{L^2}^2  + \sum\limits_{i=1}^2\sum\limits_{j=1}^2 (\|c_i^{(j)}\|_{L^2}^4 + \|c_i^{(j)}\|_{L^2}^2 \|\nabla c_i^{(j)}\|_{L^2}^2) \Big) ds \geq n \right\}.
\end{align*}
By virtue of \eqref{eqn:wp-est-1} and \eqref{eqn:wp-est-2}, we deduce that $\lim\limits_{n\to\infty} \xi^n = \infty$ $\mathbb P$-a.s. as long as $u_0 \in L^2\left( \Omega, H \right)$ and $c_i(0)\in L^4\left( \Omega, L^2 \right)$. Therefore, it is sufficient to prove that
\begin{align*}
    \sup\limits_{s\in[0,\xi^n\wedge t]} \left( \|u\|_{L^2}^2 +  \|c_1\|_{L^2}^2 +  \|c_2\|_{L^2}^2 \right) = 0
\end{align*}
for almost all $\omega \in \Omega_0$ and for all $t\geq 0$.

Fix $n\in\mathbb N$ and $t\geq 0$, we denote by the stopping time $\xi=\xi^n\wedge t$. We now calculate by It\^o's formula that 
\begin{equation}\label{est:uniqueness-1}
    \begin{split}
        & \sup\limits_{s\in[0,\xi]} \left( \|u\|_{L^2}^2 +  \|c_1\|_{L^2}^2 +  \|c_2\|_{L^2}^2 \right) + 2 \int_{0}^{\xi} \left(\nu\|\nabla u\|_{L^2}^2 + D\|\nabla c_1\|_{L^2}^2 + D\|\nabla c_2\|_{L^2}^2\right)ds
        \\
        \leq & \|u(0)\|_{L^2}^2 +  \|c_1(0)\|_{L^2}^2 +  \|c_2(0)\|_{L^2}^2 
        \\
        &+ 2\int_{0}^{\xi} \left| \left\langle u\cdot \nabla c_1^{(1)} , c_1\right\rangle \right| ds + 2\int_{0}^{\xi} \left| \left\langle u\cdot \nabla c_2^{(1)} , c_2\right\rangle \right| ds + 2\int_{0}^{\xi} \left| \left\langle u\cdot \nabla u^{(1)} , u\right\rangle \right| ds 
        \\
        &+ 2D \int_{0}^{\xi} \Big(\left| \left\langle (\tau_\eta^{(1)} -\tau_\eta^{(2)})c_1^{(1)} \nabla\Phi^{(1)} , \nabla c_1\right\rangle \right| + \left| \left\langle \tau_\eta^{(2)} c_1 \nabla\Phi^{(1)} , \nabla c_1\right\rangle \right| + \left| \left\langle \tau_\eta^{(2)} c_1^{(2)} \nabla\Phi , \nabla c_1\right\rangle \right| \Big)ds
        \\
        &+ 2D \int_{0}^{\xi} \Big(\left| \left\langle (\tau_\eta^{(1)} -\tau_\eta^{(2)})c_2^{(1)} \nabla\Phi^{(1)} , \nabla c_2\right\rangle \right| + \left| \left\langle \tau_\eta^{(2)} c_2 \nabla\Phi^{(1)} , \nabla c_2\right\rangle \right| + \left| \left\langle \tau_\eta^{(2)} c_2^{(2)} \nabla\Phi , \nabla c_2\right\rangle \right| \Big) ds
        \\
        &+ 2 \int_{0}^{\xi} \Big(\left| \left\langle (\tau_\eta^{(1)} -\tau_\eta^{(2)})\rho^{(1)} \nabla\Phi^{(1)} , u\right\rangle \right| + \left| \left\langle \tau_\eta^{(2)} \rho \nabla\Phi^{(1)} , u\right\rangle \right| + \left| \left\langle \tau_\eta^{(2)} \rho^{(2)} \nabla\Phi , u\right\rangle \right| \Big) ds,
    \end{split}
\end{equation}
where we have used the cancellation 
\[
  \langle \L_{b_k}^2c_i, c_i \rangle + \|\L_{b_k} c_i\|_{L^2}^2 = 0, \quad  \langle \L_{\theta_k}^2u, u \rangle + \|\L_{\theta_k} u\|_{L^2}^2 = 0,
\]
and the $dW$ terms vanish due to the divergence free condition of $b_k$ and $\theta_k$. Now we estimate terms in \eqref{est:uniqueness-1}. By the H\"older inequality, Young's inequality, and the interpolation inequality, we have
\begin{equation}\label{est:uniqueness-21}
\begin{split}
    &2 \int_{0}^{\xi} \left| \left\langle u\cdot \nabla c_1^{(1)} , c_1\right\rangle \right| ds + 2 \int_{0}^{\xi} \left| \left\langle u\cdot \nabla c_2^{(1)} , c_2\right\rangle \right| ds + 2 \int_{0}^{\xi} \left| \left\langle u\cdot \nabla u^{(1)} , u\right\rangle \right| ds
    \\
    \leq &\frac13  \int_{0}^{\xi} \left(\nu\|\nabla u\|_{L^2}^2 + D\|\nabla c_1\|_{L^2}^2 + D\|\nabla c_2\|_{L^2}^2\right)ds 
    \\
    &+ C  \int_{0}^{\xi} \left(1+ \|\nabla u^{(1)}\|_{L^2}^2 + \|\nabla c_1^{(1)}\|_{L^2}^2 + \|\nabla c_2^{(1)}\|_{L^2}^2 \right) \left( \|u\|_{L^2}^2 +  \|c_1\|_{L^2}^2 +  \|c_2\|_{L^2}^2\right) ds.
\end{split}
\end{equation}
Since $\tau_{\eta}$ is  
Lipschitz continuous by definition \eqref{eqn:rho}, using the elliptic bound it follows that
\begin{equation}\label{est:uniqueness-31}
\begin{split}
    &2D \int_{0}^{\xi} \Big(\left| \left\langle (\tau_\eta^{(1)} -\tau_\eta^{(2)})c_1^{(1)} \nabla\Phi^{(1)} , \nabla c_1\right\rangle \right| + \left| \left\langle \tau_\eta^{(2)} c_1 \nabla\Phi^{(1)} , \nabla c_1\right\rangle \right| + \left| \left\langle \tau_\eta^{(2)} c_1^{(2)} \nabla\Phi , \nabla c_1\right\rangle \right| \Big)ds
    \\
    &+2D \int_{0}^{\xi} \Big(\left| \left\langle (\tau_\eta^{(1)} -\tau_\eta^{(2)})c_2^{(1)} \nabla\Phi^{(1)} , \nabla c_2\right\rangle \right| + \left| \left\langle \tau_\eta^{(2)} c_2 \nabla\Phi^{(1)} , \nabla c_2\right\rangle \right| + \left| \left\langle \tau_\eta^{(2)} c_2^{(2)} \nabla\Phi , \nabla c_2\right\rangle \right| \Big) ds
    \\
    \leq &\frac13  \int_{0}^{\xi} \left( D\|\nabla c_1\|_{L^2}^2 + D\|\nabla c_2\|_{L^2}^2\right)ds 
    \\
    &+ C \int_{0}^{\xi} \left(1+   \sum\limits_{i=1}^2\sum\limits_{j=1}^2 (\|c_i^{(j)}\|_{L^2}^4 + \|c_i^{(j)}\|_{L^2}^2 \|\nabla c_i^{(j)}\|_{L^2}^2)  \right) \left( \|c_1\|_{L^2}^2 +  \|c_2\|_{L^2}^2\right) ds,
\end{split}
\end{equation}
and
\begin{equation}\label{est:uniqueness-41}
\begin{split}
    &2 \int_{0}^{\xi} \Big(\left| \left\langle (\tau_\eta^{(1)} -\tau_\eta^{(2)})\rho^{(1)} \nabla\Phi^{(1)} , u\right\rangle \right| + \left| \left\langle \tau_\eta^{(2)} \rho \nabla\Phi^{(1)} , u\right\rangle \right| + \left| \left\langle \tau_\eta^{(2)} \rho^{(2)} \nabla\Phi , u\right\rangle \right| \Big) ds
    \\
    \leq &\frac13  \int_{0}^{\xi} \nu\|\nabla u\|_{L^2}^2 ds + C \int_{0}^{\xi} \left(1+   \sum\limits_{i=1}^2\sum\limits_{j=1}^2 \|c_i^{(j)}\|_{L^2}^4   \right) \left( \|c_1\|_{L^2}^2 +  \|c_2\|_{L^2}^2\right) ds.
\end{split}
\end{equation}
Combining the estimates \eqref{est:uniqueness-21}-\eqref{est:uniqueness-41}, one has
\begin{equation}\label{est:uniqueness-2}
    \begin{split}
        & \sup\limits_{s\in[0,\xi]} \left( \|u\|_{L^2}^2 +  \|c_1\|_{L^2}^2 +  \|c_2\|_{L^2}^2 \right) +  \int_{0}^{\xi} \left(\nu\|\nabla u\|_{L^2}^2 + D\|\nabla c_1\|_{L^2}^2 + D\|\nabla c_2\|_{L^2}^2\right)ds
        \\
        \leq &  \left[\|u(0)\|_{L^2}^2 +  \|c_1(0)\|_{L^2}^2 +  \|c_2(0)\|_{L^2}^2 \right]
        \\
        &+  C  \int_{0}^{\xi} \left(1+ \|\nabla u^{(1)}\|_{L^2}^2 + \sum\limits_{i=1}^2\|\nabla c_i^{(1)}\|_{L^2}^2  + \sum\limits_{i=1}^2\sum\limits_{j=1}^2 (\|c_i^{(j)}\|_{L^2}^4 + \|c_i^{(j)}\|_{L^2}^2 \|\nabla c_i^{(j)}\|_{L^2}^2) \right) 
        \\
        &\hspace{3cm}\times\left( \|u\|_{L^2}^2 +  \|c_1\|_{L^2}^2 +  \|c_2\|_{L^2}^2\right) ds.
    \end{split}
\end{equation}
Applying the Gronwall inequality, we conclude that for any $t\geq 0$,
\begin{align*}
    \sup\limits_{s\in[0,\xi^n\wedge t]} \left( \|u\|_{L^2}^2 +  \|c_1\|_{L^2}^2 +  \|c_2\|_{L^2}^2 \right) = 0
\end{align*}
for almost all $\omega \in \Omega_0$. Consequently, \eqref{eqn:uniqueness-result} follows.
\end{proof}

\section{Non-negativity and global well-posedness}\label{sec:nonneg}
The global pathwise solution of the cut-off system from Theorem \ref{t.L122201} gives a local pathwise solution of the original NPNS system \eqref{e.L112403} by considering a stopping time before which the cut-off function $\tau_\eta$ is not activated, i.e., $\tau_\eta=1$. In this section we extend this local solution to a global one by establishing an {\it a priori} estimate. A crucial step toward the estimate is to show the non-negativity of the ionic concentrations.

For any real number $X$ we denote 
\[X^{+}:=\max \{X, 0\}, \quad X^{-}:=\max \{-X, 0\}.\]
\begin{lemma}\label{l.L110201}
Assume that the initial concentration $c_1(0), c_2(0)\in L^2(\Omega,L^2)$ are non-negative almost surely, and $(u,c_1, c_2)$ is the corresponding local pathwise solution of system \eqref{e.L112403}. Then $c_1(t), c_2(t)$ remain non-negative almost surely for any $t>0$ as long as the solution exists. 
\end{lemma}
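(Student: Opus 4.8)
The plan is to derive a closed differential inequality for the $L^2$-norms of the negative parts $c_1^-$ and $c_2^-$ and to close it with Gronwall's lemma: since $c_1(0),c_2(0)\ge 0$ almost surely we have $c_1^-(0)=c_2^-(0)=0$, so such an inequality forces $c_1^-\equiv c_2^-\equiv 0$. By the symmetry of the two concentration equations it suffices to treat $c_1$. On the (random) interval of existence of the local solution the cut-off is inactive, so $(u,c_1,c_2)$ solves \eqref{e.L112403} with reaction terms $\pm D\nabla\cdot(c_i\nabla\Phi)$ and enjoys $(u,c_1,c_2)\in C([0,T];\mch)\cap L^2(0,T;\mcv)$ up to that time; I would localize by stopping times $\xi^m$ increasing to the existence time so that all integrals below are finite, perform the estimate on $[0,\xi^m]$, and let $m\to\infty$.

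First I would apply a generalized It\^o formula --- the distribution-valued one of \cite{kry2013}, or equivalently a smooth convex approximation $\psi_\delta$ of $\psi(r):=\tfrac12(r^-)^2$ with $0\le\psi_\delta''\le 1$ followed by passing to the limit --- to the functional $\int_{\mathbb{T}^2}\psi(c_1)\,dx=\tfrac12\|c_1^-\|_{L^2}^2$, where $\psi'(r)=\min(r,0)$ and $\psi''(r)=\mathbbm{1}_{\{r<0\}}$. The advection contribution is $-\langle\psi'(c_1),u\cdot\nabla c_1\rangle=-\int_{\mathbb{T}^2} u\cdot\nabla\psi(c_1)\,dx=0$ by $\nabla\cdot u=0$; the diffusion contribution is $D\langle\psi'(c_1),\Delta c_1\rangle=-D\int_{\mathbb{T}^2}\psi''(c_1)|\nabla c_1|^2\,dx=-D\|\nabla c_1^-\|_{L^2}^2$. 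For the noise, the It\^o (Stratonovich-to-It\^o) corrector $\tfrac12\sum_k\langle\psi'(c_1),\L_{b_k}^2 c_1\rangle=-\tfrac12\sum_k\int_{\mathbb{T}^2}\psi''(c_1)(b_k\cdot\nabla c_1)^2\,dx$ cancels exactly the quadratic-variation term $\tfrac12\sum_k\langle\psi''(c_1)\L_{b_k}c_1,\L_{b_k}c_1\rangle$, and the martingale term $\sum_k\langle\psi'(c_1),\L_{b_k}c_1\rangle\,dW^k=\sum_k\big(\int_{\mathbb{T}^2} b_k\cdot\nabla\psi(c_1)\,dx\big)\,dW^k$ vanishes identically, both because $\nabla\cdot b_k=0$. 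Finally the reaction contribution, after one integration by parts and using $-\Delta\Phi=\rho=c_1-c_2$, equals $-D\int_{\mathbb{T}^2}\psi''(c_1)\,c_1\nabla c_1\cdot\nabla\Phi\,dx=-\tfrac D2\int_{\mathbb{T}^2}\nabla\big((c_1^-)^2\big)\cdot\nabla\Phi\,dx=-\tfrac D2\int_{\mathbb{T}^2}(c_1^-)^2\rho\,dx$. Collecting these terms yields, on $[0,\xi^m]$,
\[
\tfrac12\|c_1^-(t)\|_{L^2}^2+D\int_0^t\|\nabla c_1^-\|_{L^2}^2\,ds=\tfrac12\|c_1^-(0)\|_{L^2}^2-\tfrac D2\int_0^t\!\int_{\mathbb{T}^2}(c_1^-)^2\rho\,dx\,ds,
\]
with vanishing initial term.

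It then remains to absorb the reaction integral. By H\"older's inequality, the two-dimensional Ladyzhenskaya inequality $\|f\|_{L^4}^2\lesssim\|f\|_{L^2}\|f\|_{H^1}$, and Young's inequality,
\[
\Big|\tfrac D2\int_{\mathbb{T}^2}(c_1^-)^2\rho\,dx\Big|\le\tfrac D2\|c_1^-\|_{L^4}^2\|\rho\|_{L^2}\lesssim\|\rho\|_{L^2}\|c_1^-\|_{L^2}\big(\|c_1^-\|_{L^2}+\|\nabla c_1^-\|_{L^2}\big)\le\tfrac D2\|\nabla c_1^-\|_{L^2}^2+C\big(1+\|\rho\|_{L^2}^2\big)\|c_1^-\|_{L^2}^2,
\]
so after absorbing $\tfrac D2\int_0^t\|\nabla c_1^-\|_{L^2}^2\,ds$ into the left side one obtains $\tfrac12\|c_1^-(t)\|_{L^2}^2\le C\int_0^t(1+\|\rho(s)\|_{L^2}^2)\|c_1^-(s)\|_{L^2}^2\,ds$ for $t\in[0,\xi^m]$. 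Since $\rho\in C([0,\xi^m];L^2)$ $\mathbb{P}$-a.s., the integrating factor $\exp\!\big(C\int_0^{\cdot}(1+\|\rho\|_{L^2}^2)\,ds\big)$ is finite, so Gronwall's inequality gives $\|c_1^-(t)\|_{L^2}=0$ for all $t\in[0,\xi^m]$ $\mathbb{P}$-a.s.; letting $m\to\infty$ yields $c_1(t)\ge0$ a.s.\ throughout the life span, and the identical computation applied to $c_2$ gives $c_2(t)\ge0$.

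The step I expect to be the main obstacle is making the It\^o computation rigorous: $\psi$ is only $C^{1,1}$, so one must either invoke the distribution-valued It\^o formula of \cite{kry2013} directly, or approximate $\psi$ by smooth convex $\psi_\delta$ and verify that every term --- in particular the reaction term and the two stochastic terms --- passes to the limit by dominated convergence, using the a priori bounds above together with $c_1(t)\in H^1$ for a.e.\ $t$ (which is also what legitimizes the chain rule $\nabla\psi(c_1)=\psi'(c_1)\nabla c_1$, $\psi'$ being Lipschitz, and hence the integrations by parts on $\mathbb{T}^2$). Conceptually the key structural point is the exact cancellation of the Stratonovich-to-It\^o corrector against the quadratic-variation term: this is the analytic expression of the fact that Stratonovich transport noise acts through (random) volume-preserving flows and therefore preserves non-negativity, and it is precisely what removes all stochastic contributions from the estimate.
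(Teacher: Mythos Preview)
Your proof is correct and complete, but it follows a genuinely different route from the paper's. The paper first passes to an \emph{auxiliary} system in which the reaction terms $D\nabla\cdot(c_i\nabla\Phi)$ are replaced by $D\nabla\cdot(c_i^+\nabla\Phi)$; since $c_i^+c_i^-=0$ pointwise, the reaction contribution $\langle D\nabla\cdot(c_1^+\nabla\Phi),\,c_1^-\rangle$ vanishes identically, and one is left with the exact identity $\|c_1^-(t)\|_{L^2}^2+2D\int_0^t\|\nabla c_1^-\|_{L^2}^2\,ds=M_t$, whence $c_1^-\equiv 0$ immediately (the paper argues via ``non-negative martingale starting at zero'', though in fact $M_t\equiv 0$ by the same divergence-free computation you use). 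Non-negativity for the auxiliary system then forces $c_i^+=c_i$, so the auxiliary solution is actually a solution of the original system, and uniqueness transfers non-negativity back.

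By contrast, you stay with the original system, compute the reaction contribution explicitly as $-\tfrac{D}{2}\int(c_1^-)^2\rho\,dx$, and absorb it with Ladyzhenskaya/Young and a Gronwall factor $\exp\!\big(C\int_0^t(1+\|\rho\|_{L^2}^2)\,ds\big)$. This is more direct---no auxiliary problem, no appeal to uniqueness---at the modest cost of a Gronwall step. The paper's detour buys an identity with no lower-order terms to estimate, but tacitly requires local well-posedness for the auxiliary system (which it does not spell out). Your handling of the stochastic terms is the same as the paper's in substance: the corrector cancels the quadratic variation because $\L_{b_k}^*=-\L_{b_k}$, and you additionally observe (correctly) that the martingale integrand is a total divergence and hence the stochastic integral vanishes pointwise, which is slightly sharper than the paper's formulation.
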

\begin{proof}
Consider the auxiliary problem 
\begin{align}\label{e.L122401}
    \begin{split}
    dc_1&=\left(D\Delta c_1 - u\cdot \nabla c_1  + D \nabla \cdot (c^+_1 \nabla \Phi) + \frac{1}{2} \sum\limits_{k} \L_{b_k}^2c_1 \right)dt 
    + \sum\limits_{k}\L_{b_k}c_1d{W}_t^k, 
    \\
    dc_2&=\left(D\Delta c_2 -u\cdot \nabla c_2  - D \nabla \cdot (c^+_2 \nabla \Phi) +  \frac{1}{2} \sum\limits_{k}\L_{b_k}^2c_2  \right)dt
    + \sum\limits_{k}\L_{b_k}c_2 d{W}_t^k,
    \\
    du&=\left(\nu \Delta u- \Pi\Big(  u\cdot \nabla u + \rho \nabla \Phi\Big)  + \frac{1}{2} \sum\limits_{k}  \L_{\theta_k}^2u \right)dt+  \sum\limits_{k} \L_{\theta_k}u d{W}_t^k, 
    \\
    \nabla\cdot u &=0,
    \\
    -\Delta \Phi &= \rho = c_1 - c_2,
    \end{split}
\end{align}

If $c_1, c_2$ of the solution $(u,c_1,c_2)$ to this auxiliary equation are non-negative, then $(u,c_1, c_2)$ is a solution of \eqref{e.L112403}. In particular, this implies that the solution $c_1, c_2$ to system \eqref{e.L112403} are non-negative. We only prove that $c_1$ remains non-negative below, and the case for $c_2$ is the same.

By It\^o's formula (for example, see \cite{kry2013}) we have 
\begin{align}\label{e.L110201}
\begin{split}
    \|c_1^-(t)\|_{L^2}^2 &= \|c_1^-(0)\|_{L^2}^2 + \int_0^t\int_{\mathbb{T}^2}\mathbf{1}_{\{c_1(s)<0\}}\sum_{k}(\L_{b_k} c_1(s))^2dxds -
    2\sum_{k}\int_0^t \int_{\mathbb{T}^2}c_1^-(s)L_{b_k} c_1(s)dxdW_s^k \\
    &\qquad - 2\int_0^t\left\langle c_1^-, D \Delta c_1-u \cdot \nabla c_1+D \nabla \cdot\left(c_1^{+} \nabla \Phi\right)+\frac{1}{2} \sum_{k}\mathcal{L}_{b_k}^2 c_1\right\rangle ds.
\end{split}
\end{align}
We have 
$\|c_1^-(0)\|_{L^2}^2 = 0$ since $c_1(0)$ is non-negative. Since $u$ is divergence free, and as one of $c_1^+$ and $c_1^{-}$ must be zero, one has 
\[\langle u\cdot\nabla c_1, c_1^-\rangle = -\langle u\cdot\nabla c_1^-, c_1^-\rangle = 0, \quad \langle D \nabla \cdot\left(c_1^{+} \nabla \Phi\right) , c_1^-\rangle =0.
\]
Therefore,
\begin{align*}
   -\left\langle c_1^-, D \Delta c_1-u \cdot \nabla c_1+D \nabla \cdot\left(c_1^{+} \nabla \Phi\right)+\frac{1}{2} \sum_{k} \mathcal{L}_{b_k}^2  c_1\right\rangle = -D\|\nabla c_1^-\|_{L^2}^2 + \frac12\sum_{k}\langle c_1^-, \mathcal{L}_{b_k}^2 c_1^-\rangle. 
\end{align*}
Therefore by denoting
\[M_t = 2\sum_{k}\int_0^t \int_{\mathbb{T}^2}c_1^-(s)\L_{b_k}c_1^-(s)dxdW_s^k,\]
equation \eqref{e.L110201} becomes 
\begin{align*}
    \left\|c_1^{-}(t)\right\|_{L^2}^2 = \sum_{k}\int_0^t\|\L_{b_k} c_1^-\|_{L^2}^2ds + M_t - 2D\int_0^t\|\nabla c_1^-\|_{L^2}^2ds + \sum_{k}\int_0^t\langle c_1^-, \mathcal{L}_{b_k}^2 c_1^-\rangle ds
\end{align*}
Since $b_k$ is divergence free, the dual of $\mathcal{L}_{b_k}$ is $-\mathcal{L}_{b_k}$. Hence we have the cancellation 
\begin{align*}
    \|\L_{b_k} c_1^-\|_{L^2}^2 +  \langle c_1^-, \mathcal{L}_{b_k}^2 c_1^-\rangle = 0. 
\end{align*}
As a result, the martingale $M_t$ starting from zero satisfies 
\begin{align*}
    0\leq \left\|c_1^{-}(t)\right\|_{L^2}^2 + 2D\int_0^t\|\nabla c_1^-\|_{L^2}^2ds=M_t, 
\end{align*}
which implies that the martingale is zero with probability one and so is $c_1^-(t)$. Thus the solution $c_1$ of \eqref{e.L112403} is non-negative.
\end{proof}

\begin{remark}
    If $c_1(t), c_2(t)$ solve system \eqref{e.L112403} with non-negative initial value, then for $i=1,2$ and $t\geq 0$, the average 
\begin{align*}
    \int_{\mathbb{T}^2}c_i(t, x) dx = \int_{\mathbb{T}^2}c_i(0) dx=:\bar{c}_i
\end{align*}
is conserved, which is equivalent to $\|c_i(t)\|_{L^1}=\|c_i(0)\|_{L^1}$ since the concentration is non-negative. Note that as $\rho=-\Delta \Phi$ which has zero spatial mean, consequently, the relation $\rho=c_1-c_2$ yields that $\bar{c}_1=\bar{c}_2$. 
\end{remark}
Let $U=(u,c_1,c_2)$ be a pathwise solution to the original NPNS system \eqref{e.L112403}, and denote by
\[\c_i = c_i - \bar{c}_i, \quad i=1,2.\]

\begin{proposition}\label{p.L122601}
Assume that $c_1(0),c_2(0)$ are non-negative almost surely. Then one has 
\begin{align}\label{eq.L122505}
    \|\c_1(t)\|_{L^2}^2+\|\c_2(t)\|_{L^2}^2\leq e^{-2Dt}(\|\c_1(0)\|_{L^2}^2 + \|\c_2(0)\|_{L^2}^2) \quad a.s..
\end{align}
\end{proposition}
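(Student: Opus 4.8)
The plan is to apply It\^o's formula to $\|\c_1(t)\|_{L^2}^2$ and to $\|\c_2(t)\|_{L^2}^2$, add the two energy identities, invoke the non-negativity of $c_1,c_2$ from Lemma \ref{l.L110201} to discard the electromigration interaction term, and close with the Poincar\'e inequality and Gronwall's inequality.

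Since $\bar c_i$ is a constant, $\c_i=c_i-\bar c_i$ satisfies $d\c_i=dc_i$ with $\nabla\c_i=\nabla c_i$, $\Delta\c_i=\Delta c_i$, $\L_{b_k}\c_i=\L_{b_k}c_i$, and $\c_i$ has zero spatial mean. Applying It\^o's formula (in the general form of \cite{kry2013} to accommodate the $L^2$-valued setting) to $\|\c_1\|_{L^2}^2$, I would collect the following contributions to the drift: the dissipation $2\langle\c_1,D\Delta\c_1\rangle=-2D\|\nabla\c_1\|_{L^2}^2$; the advection term $2\langle\c_1,u\cdot\nabla\c_1\rangle=0$ by $\nabla\cdot u=0$; the stochastic integral $2\sum_k\langle\c_1,\L_{b_k}\c_1\rangle\,dW^k_t$, which vanishes because $\langle\c_1,\L_{b_k}\c_1\rangle=-\tfrac12\int_{\T^2}(\nabla\cdot b_k)\c_1^2\,dx=0$; the Stratonovich--It\^o corrector drift $\sum_k\langle\c_1,\L_{b_k}^2\c_1\rangle=-\sum_k\|\L_{b_k}\c_1\|_{L^2}^2$, which is exactly cancelled by the It\^o correction $+\sum_k\|\L_{b_k}\c_1\|_{L^2}^2$; and the electromigration term, which, integrating by parts and using $\nabla\c_1=\nabla c_1$ and the Poisson equation $-\Delta\Phi=\rho$, equals $2D\langle\c_1,\nabla\cdot(c_1\nabla\Phi)\rangle=-D\langle\nabla(c_1^2),\nabla\Phi\rangle=D\langle c_1^2,\Delta\Phi\rangle=-D\langle c_1^2,\rho\rangle$. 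This gives
\[\frac{d}{dt}\|\c_1\|_{L^2}^2=-2D\|\nabla\c_1\|_{L^2}^2-D\int_{\T^2}c_1^2\rho\,dx,\]
and, by the same computation with the sign of the electromigration term reversed,
\[\frac{d}{dt}\|\c_2\|_{L^2}^2=-2D\|\nabla\c_2\|_{L^2}^2+D\int_{\T^2}c_2^2\rho\,dx.\]

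Adding these identities and using $\rho=c_1-c_2$, the interaction terms combine into $-D\int_{\T^2}(c_1^2-c_2^2)\rho\,dx=-D\int_{\T^2}(c_1+c_2)\rho^2\,dx$, which is nonpositive precisely because $c_1,c_2\ge 0$ by Lemma \ref{l.L110201}; this is the only place the non-negativity is used. Hence
\[\frac{d}{dt}\bigl(\|\c_1\|_{L^2}^2+\|\c_2\|_{L^2}^2\bigr)\le -2D\bigl(\|\nabla\c_1\|_{L^2}^2+\|\nabla\c_2\|_{L^2}^2\bigr)\le -2D\bigl(\|\c_1\|_{L^2}^2+\|\c_2\|_{L^2}^2\bigr),\]
the last step being the Poincar\'e inequality on $\T^2$ (whose constant is $1$), valid since $\c_1,\c_2$ are mean-zero. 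Gronwall's inequality then yields \eqref{eq.L122505}.

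The only point requiring care is the rigorous application of It\^o's formula to $\|\c_i\|_{L^2}^2$ for a solution a priori only in $C([0,T];L^2)\cap L^2(0,T;H^1)$, together with the integrations by parts and the cancellation of the martingale and corrector terms at that regularity. This is handled in the usual way: one works with the Galerkin approximations from Section \ref{cut-off} (or localizes via stopping times controlling $\|c_i\|_{L^\infty(0,T;H^1)}$), derives the identity there, and passes to the limit (respectively removes the localization) using the estimates already established. Everything else is elementary, and the decisive sign of the electromigration interaction is dictated entirely by the non-negativity of the concentrations together with the opposite-valence structure $z_1=-z_2$.
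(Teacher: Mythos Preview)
Your proof is correct and follows essentially the same route as the paper: apply It\^o's formula to $\|\c_i\|_{L^2}^2$, use the divergence-free conditions to kill the advection, martingale, and corrector contributions, combine the two electromigration terms into $-D\int_{\T^2}(c_1+c_2)\rho^2\,dx\le 0$ via non-negativity, and conclude with Poincar\'e and Gronwall. The only cosmetic difference is that the paper records the sharper bound $-\tfrac12\|\rho\|_{L^3}^3$ on the interaction term (using $c_1+c_2\ge|\rho|$), which it reuses later; you simply discard the term, which is all that is needed for \eqref{eq.L122505}.
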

\begin{proof}
Applying It\^o's formula to the equations of $c_1, c_2$ in \eqref{e.L112403} and using divergence free property of $u, b, \theta$,  we obtain
\begin{align*}
    \|\c_1(t)\|_{L^2}^2 &= \|\c_1(0)\|_{L^2}^2-2D\int_0^t\|\nabla \c_1(r)\|_{L^2}^2dr + 2D\int_0^t\langle \nabla\cdot (c_1\nabla\Phi), c_1 \rangle dr,\\
    \|\c_2(t)\|_{L^2}^2 &= \|\c_2(0)\|_{L^2}^2-2D\int_0^t\|\nabla \c_2(r)\|_{L^2}^2dr - 2D\int_0^t\langle \nabla\cdot (c_2\nabla\Phi), c_2 \rangle dr, 
\end{align*}  
where we also used the fact that the dual of $\mathcal{L}_{b_k}$ is $-\mathcal{L}_{b_k}$. 

As $c_1, c_2$ are non-negative, we have $c_1+c_2 \geq c_1-c_2=\rho$. Therefore, 
adding the two equations we have
\begin{align*}
    \langle \nabla\cdot (c_1\nabla\Phi), c_1 \rangle - \langle \nabla\cdot (c_2\nabla\Phi), c_2 \rangle = -\frac12\langle (c_1-c_2)^2,c_1+c_2\rangle\leq -\frac12\|\rho\|_{L^3}^3. 
\end{align*}
Consequently, 
\begin{align}\label{e.L122506}
\begin{split}
    \|\c_1(t)\|_{L^2}^2+\|\c_2(t)\|_{L^2}^2
    &\leq \|\c_1(0)\|_{L^2}^2 + \|\c_2(0)\|_{L^2}^2\\
    &-D\int_0^t\|\rho\|_{L^3}^3dr-2D\int_0^t(\|\nabla \c_1(r)\|_{L^2}^2+\|\nabla \c_2(r)\|_{L^2}^2)dr.
\end{split}
\end{align}
Recall that the constant of the Poincar\'e inequality is 1, we have
\begin{align}\label{e.L122507}
\|\c_1(t)\|_{L^2}^2+\|\c_2(t)\|_{L^2}^2\leq e^{-2Dt}(\|\c_1(0)\|_{L^2}^2 + \|\c_2(0)\|_{L^2}^2) \quad a.s..
\end{align}
\end{proof}
It is clear that the proof of Lemma \ref{l.L110201} and Proposition \ref{p.L122601} are also valid for the cut-off system \eqref{sys:modified}. Hence the global pathwise solution $U^{\eta}=(u^{\eta},c_1^{\eta},c_2^{\eta})$ obtained in Theorem \ref{t.L122201} also satisfies the estimate \eqref{eq.L122505}. This aspect allows us to derive an estimate on the $U^{\eta}$ and show the existence of global solutions of the original NPNS system \eqref{e.L112403}. 

\begin{theorem}
For any $\Fc_0$ measurable initial data $U_0\in L^2(\Omega,\mch)$, there is a unique global pathwise solution to the NPNS system \eqref{e.L112403}. 
\end{theorem}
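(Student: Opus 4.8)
The plan is to extend the global pathwise solution of the cut-off system \eqref{sys:modified} to a global pathwise solution of the original NPNS system \eqref{e.L112403}, using the \emph{a priori} control on the concentration fluctuations $\c_i^\eta = c_i^\eta - \bar c_i$ provided by Proposition \ref{p.L122601}, together with the non-negativity established in Lemma \ref{l.L110201}. The essential point is that, by Proposition \ref{p.L122601} applied to the cut-off system, the quantity $\|c_1^\eta(t)\|_{L^2} + \|c_2^\eta(t)\|_{L^2}$ is bounded: indeed $\|c_i^\eta(t)\|_{L^2}^2 = \|\c_i^\eta(t)\|_{L^2}^2 + 2\pi^2|\bar c_i|^2\cdot(\text{vol})$, and $\|\c_i^\eta(t)\|_{L^2}$ decays in time while $\bar c_i = \bar c_i(0)$ is conserved and determined by the initial data. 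Hence there is a deterministic constant $M_0$, depending only on $\|c_1(0)\|_{L^2}+\|c_2(0)\|_{L^2}$ (and through $\bar c_i$ on the $L^1$ norms of the initial data), such that $\|c_1^\eta(t)\|_{L^2} + \|c_2^\eta(t)\|_{L^2} \le M_0$ almost surely for all $t\ge 0$.

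First I would fix $\eta$ large enough that $M_0 < \eta/2$; this requires $M_0$ to be deterministic, which is why the uniform-in-$\omega$ bound from Proposition \ref{p.L122601} is used rather than an $L^2(\Omega)$ bound. With this choice, the cut-off function satisfies $\tau_\eta(\|c_1^\eta(t)\|_{L^2}+\|c_2^\eta(t)\|_{L^2}) = \tau_\eta\big(\text{something} \le \eta/2\big) = 1$ for all $t\ge 0$, almost surely, by the defining property \eqref{eqn:rho}. Consequently the cut-off solution $U^\eta$ never activates the cut-off, so it solves the original system \eqref{e.L112403} in the sense of Definition \ref{def:pathwise.sol}; this yields existence of a global pathwise solution. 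For initial data in $L^2(\Omega,\mch)$ rather than with deterministically bounded norm, I would run the same decomposition $\Omega = \bigsqcup_k A_k$ with $A_k = \{k \le \|U_0\|_{L^2} < k+1\}$ used in the proof of Theorem \ref{t.L122201}: on each $A_k$ the initial norm is deterministically bounded by $k+1$, so one chooses $\eta_k$ accordingly, obtains a solution $U^{\eta_k}\mathbf 1_{A_k}$ of \eqref{e.L112403}, and glues these together into a global pathwise solution with the given $L^2(\Omega)$ initial data; one must check measurability and that the pieces are consistent, which is routine since the $A_k$ are disjoint and $\Fc_0$-measurable.

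For uniqueness, I would argue that any two global pathwise solutions $U^{(1)}, U^{(2)}$ of \eqref{e.L112403} with the same initial data are, up to a stopping time, solutions of the cut-off system for any sufficiently large $\eta$: namely, let $\sigma_R^{(i)} = \inf\{t: \|c_1^{(i)}(t)\|_{L^2}+\|c_2^{(i)}(t)\|_{L^2} \ge R\}$, so that on $[0,\sigma_R^{(1)}\wedge\sigma_R^{(2)})$ both solve \eqref{sys:modified} with $\eta = 2R$. Pathwise uniqueness for the cut-off system (Proposition \ref{p.w05241}), combined with the localization $\Omega = \bigsqcup A_k$, gives $U^{(1)} = U^{(2)}$ on $[0,\sigma_R^{(1)}\wedge\sigma_R^{(2)})$; since by the a priori bound $\sigma_R^{(i)} \to \infty$ as $R\to\infty$ almost surely (using Proposition \ref{p.L122601} and conservation of $\bar c_i$, together with the non-negativity from Lemma \ref{l.L110201} which is what makes Proposition \ref{p.L122601} applicable), we conclude $U^{(1)} = U^{(2)}$ for all $t\ge 0$ almost surely. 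The main obstacle, and the only genuinely delicate point, is making sure the a priori bound on $\|c_1^\eta\|_{L^2}+\|c_2^\eta\|_{L^2}$ is truly deterministic and $\eta$-independent once we know the cut-off is inactive --- this is a mild circularity (the bound is used to show the cut-off is inactive, but the bound from Proposition \ref{p.L122601} was derived for the cut-off system and holds regardless of whether the cut-off activates), and it is resolved by noting that Proposition \ref{p.L122601} and Lemma \ref{l.L110201} were explicitly stated to be valid for the cut-off system \eqref{sys:modified} itself, so the estimate \eqref{eq.L122505} holds for $U^\eta$ unconditionally.
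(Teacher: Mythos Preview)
Your proposal is correct and takes a genuinely different, more direct route than the paper. The paper argues by considering the whole sequence $U^n$ of cut-off solutions with $\eta=n$, introduces stopping times $\tau^n=\inf\{t:\|c_1^n(t)\|_{L^2}+\|c_2^n(t)\|_{L^2}\ge n/2\}$, patches the $U^n$ into a local solution $(U,\tau)$ with $\tau=\sup_n\tau^n$, and then proves $\tau=\infty$ a.s.\ by showing $\mathbb P(\tau^n\le N)\le C/n$ via Markov's inequality applied to the bound \eqref{eq.L122505}. You instead exploit the fact that \eqref{eq.L122505} is a \emph{pathwise} estimate (not merely in expectation): combined with conservation of $\bar c_i$ it yields $\|c_i^\eta(t)\|_{L^2}\le\|c_i(0)\|_{L^2}$ a.s.\ for all $t$, so on each $A_k=\{k\le\|U_0\|_{L^2}<k+1\}$ a single sufficiently large $\eta_k$ suffices and the cut-off is never activated. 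Your handling of the apparent circularity is exactly right: Lemma \ref{l.L110201} and Proposition \ref{p.L122601} are proved for the cut-off system itself (the paper says as much just before the theorem), so the bound on $U^\eta$ is unconditional and no bootstrap is needed. What your approach buys is simplicity---no increasing family of stopping times, no Borel--Cantelli/Markov step---at the cost of relying on the decomposition $\Omega=\bigsqcup_k A_k$ to reduce to deterministically bounded data; the paper's stopping-time construction is the more standard template in stochastic PDE and would still go through if only moment bounds were available, but here your shortcut is fully justified.
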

\begin{proof}
For each $n\in\mathbb{N}$, let $U^n=(u^n,c_1^n,c_2^n)$ be the solution of the cut-off system \eqref{sys:modified} with $\eta=n$ according to Theorem \ref{t.L122201}. Define the stopping times
\begin{align*}
    \tau^n(\omega)= \begin{cases}\inf \left\{t>0:\left\|c_1^{n}(t)\right\|_{L^2}+\left\|c_2^{n}(t)\right\|_{L^2} \geq n / 2\right\}, & \text { if }\left\|c_1(0)\right\|_{L^2}+\left\|c_2(0)\right\|_{L^2} <n / 2, \\ 0, & \text { if }\left\|c_1(0)\right\|_{L^2}+\left\|c_2(0)\right\|_{L^2} \geq n / 2.\end{cases}
\end{align*}
By the definition of the stopping time $\tau^n$ and the cut-off function $\tau_n(\|c_1\|_{L^2} + \|c_2\|_{L^2})$, for $t\in[0,\tau^n]$ we have $\tau_n(\|c_1\|_{L^2} + \|c_2\|_{L^2})=1$. Consequently, $U^n$ is also the unique solution to the original NPNS system \eqref{e.L112403} on $t\in[0,\tau^n]$.
By uniqueness, the sequence $\tau^n$ is non-decreasing almost surely and $U^n=U^m$ on $[0,\tau^n\wedge\tau^m]$. 
Let $\tau=\sup_{n}\tau^n$ and define $U=U^{n}$ on $[0,\tau^n]$ for any $n\in \mathbb N$. As $U_0\in L^2(\Omega,\mathcal H)$, we have $\mathbb{P}(\tau>0)=1$. Moreover, $(U,\tau)$ is a local pathwise solution of the original NPNS system \eqref{e.L112403}. 
It is a global pathwise solution if we can show $\tau=\infty$ almost surely.
Denote $S=\{\omega\in\Omega: \tau<\infty\}.$ Then one has 
\[S=\bigcup_{N}\bigcap_{n}\{\tau^n\leq N\}.\]
It follows from the estimate \eqref{eq.L122505} for $U^{n}$ and the Markov inequality that 
\begin{align*}
   \mathbb{P}(\tau^n\leq N) &=\mathbb{P} \left(\sup_{t\in[0,N]}(\|c_1^n(t)\|_{L^2}+\|c_2^n(t)\|_{L^2})>\frac{n}2\right)\\
   &\leq \frac{2\E \sup\limits_{t\in[0,N]}(\|c_1^n(t)\|_{L^2}+\|c_2^n(t)\|_{L^2})}{n}\leq \frac{C}{n},
\end{align*}
where the constant $C>0$ depending on $\E c_1(0)$, $\E c_2(0)$, but not on $n$. Therefore, it follows that for every $N\in \mathbb N$,
\[\mathbb P\left(\bigcap_{n}\{\tau^n\leq N\}\right)=\lim_{n\to\infty} \mathbb{P}(\tau^n\leq N)=0,\]
which implies $\mathbb P(S)=0$ and hence the global existence of the solution to  \eqref{e.L112403}.
\end{proof}

\section{Enhanced dissipation}\label{enhance}
Throughout this section, we denote by 
\[\c_i = c_i - \bar{c}_i, \quad i=1,2,\]
which have zero means. Let $U=(u,\c_1,\c_2)$ with norm 
\[\|U\|_{L^2}^2=\|u\|_{L^2}^2+\|\c_1\|_{L^2}^2+\|\c_2\|_{L^2}^2.\]
Under the special type of noise as described in \eqref{e.L122101}, the It\^o form of the system \eqref{e.w04091} becomes 
\begin{align}\label{e.L112401}
    \begin{split}
    dc_1&=\left((D+\kappa)\Delta c_1 -u\cdot \nabla c_1 +D  \nabla \cdot (c_1 \nabla \Phi) \right)dt + \sqrt{2\kappa}\sum_{k}\zeta_k\sigma_k\cdot\nabla c_1 dW_t^k, 
    \\
    dc_2&=\left((D+\kappa)\Delta c_2 -u\cdot \nabla c_2 -D  \nabla \cdot (c_2 \nabla \Phi)\right)dt + \sqrt{2\kappa}\sum_{k}\zeta_k\sigma_k\cdot\nabla c_2 dW_t^k,
    \\
    du&=\left(\nu\Delta u + S_{\zeta}(u)- \Pi\left( u\cdot \nabla u +\rho \nabla \Phi\right)\right)dt + \sqrt{2\kappa}\sum_{k}\zeta_k\Pi(\sigma_k\cdot\nabla u)dW_t^k, 
    \\
    \nabla\cdot u &=0,
    \\
    -\Delta \Phi &= \rho = c_1 - c_2.
    \end{split}
\end{align}
where 
\begin{align}\label{e.L112503}
    S_{\zeta}(u) = \kappa\sum_{k}\zeta_k^2\Pi\big[\sigma_k\cdot\nabla\Pi(\sigma_{-k}\cdot\nabla u)\big]. 
\end{align}
For the derivation, we refer the readers to \cite{luo2023enhanced}. 

\subsection{Energy inequality and exponential stability}
\begin{proposition}\label{p.L110901}
Let $\gamma_0$ be the constant from the Sobolev inequality
$\|f\|_{L^{\infty}}\leq \gamma_0\|f\|_{H^2},$
and denote by
$\bar{\c}_0 = \|\c_1(0)\|_{L^2}^2 + \|\c_2(0)\|_{L^2}^2.$
Assume 
\begin{align}\label{e.L112509}
    \nu D-2\gamma_0^2\bar{\c}_0>0,
\end{align}
and let 
$\gamma = \min\left\{\nu,2D-\frac{4\gamma_0^2\bar{\c}_0}{\nu}\right\}.$
Then for $\mathbb{P}$ almost surely one has
\begin{align}\label{e.L112501}
    \|U(t)\|_{L^2}^2+\gamma\int_s^t\|\nabla U(r)\|_{L^2}^2dr\leq \|U(s)\|_{L^2}^2, \quad t>s\geq0, 
\end{align}
and
\begin{align}\label{e.L112502}
    \|U(t)\|_{L^2}^2\leq e^{-\gamma t}\|U(0)\|_{L^2}^2, \quad t\geq 0. 
\end{align}
\end{proposition}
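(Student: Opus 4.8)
The plan is to run the natural $L^2$-energy estimate for the It\^o system \eqref{e.L112401}, with the transport noise seen to be $L^2$-neutral and the electric coupling treated as a lower-order perturbation that is controlled precisely by the smallness assumption \eqref{e.L112509} together with the a priori decay of the concentrations from Proposition~\ref{p.L122601}. Concretely, I would first apply the generalized It\^o formula of \cite{kry2013} to $\|U(t)\|_{L^2}^2=\|u\|_{L^2}^2+\|\c_1\|_{L^2}^2+\|\c_2\|_{L^2}^2$ along the pathwise solution, exactly in the spirit of Lemma~\ref{lem:estimate}(I) and Proposition~\ref{p.L122601}. Since $\sigma_k$ (hence $b_k,\theta_k$) are divergence free, the stochastic integrals vanish identically; moreover, as in those computations, the transport-noise contributions (the It\^o correctors $\kappa\Delta$ and $S_\zeta$ together with the It\^o quadratic-variation terms) combine non-positively in the $L^2$ balance — indeed, divergence-free transport noise is $L^2$-conservative, cf.\ \cite{luo2023enhanced} — so they may be discarded. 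Using in addition $\langle u\cdot\nabla u,u\rangle=0$, $\langle u\cdot\nabla\c_i,\c_i\rangle=0$, the identity $\langle\nabla\cdot(c_i\nabla\Phi),c_i\rangle=-\tfrac12\langle\rho,c_i^2\rangle$ (integration by parts and $-\Delta\Phi=\rho$), and $\rho=\c_1-\c_2$, I arrive at the inequality, valid for all $t>s\ge 0$,
\begin{align*}
& \|U(t)\|_{L^2}^2+2\int_s^t(\nu\|\nabla u\|_{L^2}^2+D\|\nabla\c_1\|_{L^2}^2+D\|\nabla\c_2\|_{L^2}^2)\,dr \\
& \qquad\qquad \le \|U(s)\|_{L^2}^2 - D\int_s^t\langle\rho^2,c_1+c_2\rangle\,dr - 2\int_s^t\langle\rho\nabla\Phi,u\rangle\,dr .
\end{align*}

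The next step is to dispose of the two nonlinear terms. By Lemma~\ref{l.L110201} the concentrations stay non-negative, hence $\langle\rho^2,c_1+c_2\rangle\ge 0$ and that term can simply be discarded. For the coupling term I would integrate by parts once more, $\langle\rho\nabla\Phi,u\rangle=-\langle\Phi,u\cdot\nabla\rho\rangle$ (using $\nabla\cdot u=0$), and estimate, via the Sobolev embedding $\|f\|_{L^\infty}\le\gamma_0\|f\|_{H^2}$ and the elliptic bound $\|\Phi\|_{H^2}\lesssim\|\rho\|_{L^2}$,
\[
2\,|\langle\rho\nabla\Phi,u\rangle|\le 2\|\Phi\|_{L^\infty}\|u\|_{L^2}\|\nabla\rho\|_{L^2}\le 2\gamma_0\|\rho\|_{L^2}\|u\|_{L^2}\|\nabla\rho\|_{L^2}.
\]
The decisive input is Proposition~\ref{p.L122601} (applicable since $c_1,c_2\ge 0$), which gives $\|\rho(r)\|_{L^2}^2\le 2(\|\c_1(r)\|_{L^2}^2+\|\c_2(r)\|_{L^2}^2)\le 2\bar{\c}_0$ for every $r\ge 0$. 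Inserting $\|\rho\|_{L^2}\le\sqrt{2\bar{\c}_0}$, applying Young's inequality with weight $\nu$, and using $\|\nabla\rho\|_{L^2}^2\le 2(\|\nabla\c_1\|_{L^2}^2+\|\nabla\c_2\|_{L^2}^2)$ then yields
\[
2\,|\langle\rho\nabla\Phi,u\rangle|\le \nu\|u\|_{L^2}^2+\frac{4\gamma_0^2\,\bar{\c}_0}{\nu}(\|\nabla\c_1\|_{L^2}^2+\|\nabla\c_2\|_{L^2}^2).
\]

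Finally, I would substitute these bounds into the energy inequality and absorb: the term $\tfrac{4\gamma_0^2\bar{\c}_0}{\nu}(\|\nabla\c_1\|_{L^2}^2+\|\nabla\c_2\|_{L^2}^2)$ into $2D(\|\nabla\c_1\|_{L^2}^2+\|\nabla\c_2\|_{L^2}^2)$ — this is exactly where \eqref{e.L112509} is used, since it says $2D-\tfrac{4\gamma_0^2\bar{\c}_0}{\nu}>0$ — and the term $\nu\|u\|_{L^2}^2$ into $2\nu\|\nabla u\|_{L^2}^2$ via the Poincar\'e inequality $\|u\|_{L^2}\le\|\nabla u\|_{L^2}$ (constant $1$ on $\mathbb{T}^2$). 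Using also $\|\nabla\c_i\|_{L^2}\ge\|\c_i\|_{L^2}$ and $\|\nabla u\|_{L^2}\ge\|u\|_{L^2}$, one reads off $\gamma=\min\{\nu,\,2D-\tfrac{4\gamma_0^2\bar{\c}_0}{\nu}\}>0$ and
\[
\|U(t)\|_{L^2}^2+\gamma\int_s^t\|\nabla U(r)\|_{L^2}^2\,dr\le\|U(s)\|_{L^2}^2,\qquad t>s\ge 0,
\]
which is \eqref{e.L112501}; dropping the dissipation integral and applying Poincar\'e once more gives $\|U(t)\|_{L^2}^2\le\|U(s)\|_{L^2}^2-\gamma\int_s^t\|U(r)\|_{L^2}^2\,dr$, and Gronwall's inequality with $s=0$ yields the exponential decay \eqref{e.L112502}. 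I do not expect a genuine obstacle: the only technical point is the rigorous application of It\^o's formula at the regularity $U\in C([0,T];\mch)\cap L^2(0,T;\mcv)$ and the handling of the noise terms in the $L^2$ balance, which is by now routine for divergence-free transport noise (cf.\ \cite{kry2013,luo2023enhanced} and the analogous computations in Section~\ref{cut-off} and Proposition~\ref{p.L122601}); the real content of the statement is the structural observation that, thanks to Proposition~\ref{p.L122601}, the electric coupling is subordinate to the diffusion precisely when $\bar{\c}_0$ is small.
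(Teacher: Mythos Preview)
Your proposal is correct and follows essentially the same route as the paper: It\^o on $\|U\|_{L^2}^2$ (with the transport-noise contributions cancelling), discarding $-D\langle\rho^2,c_1+c_2\rangle\le 0$ via non-negativity, and controlling the coupling term by the smallness $\|\rho\|_{L^2}^2\le 2\bar{\c}_0$ from Proposition~\ref{p.L122601} together with Young and Poincar\'e, to arrive at the same energy inequality. The only cosmetic difference is that you first integrate by parts to $\langle\rho\nabla\Phi,u\rangle=-\langle\Phi,u\cdot\nabla\rho\rangle$ and bound $\|\Phi\|_{L^\infty}$, whereas the paper bounds $\|\nabla\Phi\|_{L^\infty}$ directly; both lead to the identical product $\gamma_0\|\rho\|_{L^2}\|u\|_{L^2}\|\nabla\rho\|_{L^2}$ and hence the same constants.
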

\begin{proof}
Recall that in the proof of Proposition \ref{p.L122601} we obtained 
\begin{align}\label{e.L111001}
\begin{split}
    \|\c_1(t)\|_{L^2}^2+\|\c_2(t)\|_{L^2}^2
    &\leq \|\c_1(s)\|_{L^2}^2 + \|\c_2(s)\|_{L^2}^2\\
    &-D\int_s^t\|\rho\|_{L^3}^3dr-2D\int_s^t(\|\nabla \c_1(r)\|_{L^2}^2+\|\nabla \c_2(r)\|_{L^2}^2)dr.
\end{split}
\end{align}
and 
\begin{align}\label{e.L111004}
    \|\c_1(t)\|_{L^2}^2+\|\c_2(t)\|_{L^2}^2\leq e^{-2Dt}(\|\c_1(0)\|_{L^2}^2 + \|\c_2(0)\|_{L^2}^2).
\end{align}
Applying It\^o's formula to the equation of $u$ in \eqref{e.L112403} we have 
\begin{align}\label{e.L111002}
    \|u(t)\|_{L^2}^2 = \|u(s)\|_{L^2}^2 - 2\nu\int_s^t\|\nabla u(r)\|_{L^2}^2dr - 2\int_{s}^t\langle \rho\nabla\Phi, u\rangle dr.
\end{align}
By the elliptic estimate, the H\"older inequality, Young's inequality, and the Poincar\'e inequality,
\begin{align*}
    \langle \rho\nabla\Phi, u\rangle \leq \|\rho\|_{L^2}\|\nabla\Phi\|_{L^{\infty}}\|u\|_{L^2}
    &\leq \gamma_0\|\rho\|_{L^2}\|\nabla\rho\|_{L^2}\|u\|_{L^2}\\
    &\leq \frac{\gamma_0^2}{2\nu}\|\rho\|_{L^2}^2\|\nabla\rho\|_{L^2}^2+ \frac{\nu}{2}\|\nabla u\|_{L^2}^2
    \\
    &\leq \frac{\gamma_0^2}{2\nu} \|\rho-\bar{\rho}\|_{L^2}^2\|\nabla\rho\|_{L^2}^2+ \frac{\nu}{2}\|\nabla u\|_{L^2}^2
    \\
    & \leq \frac{2\gamma_0^2}{\nu}(\| \c_1\|_{L^2}^2+\|\c_2\|_{L^2}^2)(\|\nabla \c_1\|_{L^2}^2+\|\nabla \c_2\|_{L^2}^2) + \frac{\nu}{2}\|\nabla u\|_{L^2}^2
    \\
    &\leq \frac{2\gamma_0^2\bar{\c}_0}{\nu}(\|\nabla \c_1\|_{L^2}^2+\|\nabla \c_2\|_{L^2}^2)+ \frac{\nu}{2}\|\nabla u\|_{L^2}^2,
\end{align*}
where in the last step we used 
\eqref{e.L111004} with the notation
$\bar{\c}_0 = \|\c_1(0)\|_{L^2}^2 + \|\c_2(0)\|_{L^2}^2.$
Combining this estimate with \eqref{e.L111001} and \eqref{e.L111002}, use the Poincar\'e inequality, we obtain 
\begin{align}\label{e.L112506}
    \|U(t)\|_{L^2}^2\leq \|U(s)\|_{L^2}^2-\nu\int_{s}^{t}\|\nabla u(r)\|_{L^2}^2dr - \left(2D-\frac{4\gamma_0^2\bar{\c}_0}{\nu}\right)\int_{s}^{t}(\|\c_1(r)\|_{L^2}^2+\|\c_2(r)\|_{L^2}^2)dr.
\end{align}
This proves the desired energy inequality \eqref{e.L112501}, which in turn implies \eqref{e.L112502} by Gronwall's inequality and the Poincar\'e inequality. 
\end{proof}
\begin{proposition}\label{p.L111001}
For any initial data $c_1(0), c_2(0), u(0)$, there is a (random) constant $C>0$ depending on $\|u(0)\|_{L^2}^2, \bar{\c}_0$ such that 
\begin{align*}
     \|u(t)\|_{L^2}^2+\|\c_1(t)\|_{L^2}^2+\|\c_2(t)\|_{L^2}^2
    \leq C e^{-\Gamma t}, \quad t\geq 0,    
\end{align*}
where $\Gamma$ is a constant depending on $\nu$ and $D$. 
\end{proposition}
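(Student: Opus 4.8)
The plan is to exploit the \emph{unconditional} exponential decay of the charge densities already contained in Proposition \ref{p.L122601}---in particular inequality \eqref{e.L111001}, whose proof never uses the smallness hypothesis \eqref{e.L112509}---and then to read off the decay of $u$ by treating the electric force $\rho\nabla\Phi$ in the velocity equation as an exponentially small forcing term, rather than as something to be absorbed into the viscous dissipation. This is the essential difference with Proposition \ref{p.L110901}: there, smallness of $\bar{\c}_0$ is exactly what lets one close one coupled energy estimate; here we decouple the concentration estimate from the velocity estimate, at the price of a time-weighted Duhamel argument.

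First I would record, from the proof of Proposition \ref{p.L122601}, that with $g(t):=\|\c_1(t)\|_{L^2}^2+\|\c_2(t)\|_{L^2}^2$ one has $g(t)\le \bar{\c}_0\,e^{-2Dt}$ together with the dissipation bound $2D\int_s^\infty\big(\|\nabla\c_1\|_{L^2}^2+\|\nabla\c_2\|_{L^2}^2\big)\dd r\le g(s)$ for every $s\ge 0$; combining the two, $\int_s^\infty\big(\|\nabla\c_1\|_{L^2}^2+\|\nabla\c_2\|_{L^2}^2\big)\dd r\le \tfrac{\bar{\c}_0}{2D}\,e^{-2Ds}$. For $u$, as in the derivation of \eqref{e.L111002}---which remains valid for \eqref{e.L112401}, since in Stratonovich form the transport noise contributes nothing to $\tfrac{d}{dt}\|u\|_{L^2}^2$, equivalently the It\^o corrector $S_\zeta(u)$ cancels the It\^o correction coming from the noise---one has $\tfrac{d}{dt}\|u\|_{L^2}^2=-2\nu\|\nabla u\|_{L^2}^2-2\langle\rho\nabla\Phi,u\rangle$. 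Estimating $|\langle\rho\nabla\Phi,u\rangle|\le\gamma_0\|\rho\|_{L^2}\|\nabla\rho\|_{L^2}\|u\|_{L^2}$ exactly as in Proposition \ref{p.L110901}, and then using the Poincar\'e inequality $\|u\|_{L^2}\le\|\nabla u\|_{L^2}$, Young's inequality, $\|\rho\|_{L^2}^2\le 2g(t)$, and $\|\nabla\rho\|_{L^2}^2\le 2\big(\|\nabla\c_1\|_{L^2}^2+\|\nabla\c_2\|_{L^2}^2\big)$, I obtain
\[
\tfrac{d}{dt}\|u\|_{L^2}^2\le-\nu\|u\|_{L^2}^2+\tfrac{4\gamma_0^2\bar{\c}_0}{\nu}\,e^{-2Dt}\big(\|\nabla\c_1\|_{L^2}^2+\|\nabla\c_2\|_{L^2}^2\big).
\]

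Finally, Gronwall's inequality (variation of constants) gives
\[
\|u(t)\|_{L^2}^2\le e^{-\nu t}\|u(0)\|_{L^2}^2+\tfrac{4\gamma_0^2\bar{\c}_0}{\nu}\int_0^t e^{-\nu(t-r)}e^{-2Dr}\big(\|\nabla\c_1\|_{L^2}^2+\|\nabla\c_2\|_{L^2}^2\big)\dd r,
\]
and I would bound the Duhamel integral by splitting it at $t/2$: on $[0,t/2]$ use $e^{-\nu(t-r)}\le e^{-\nu t/2}$ together with the global bound $\int_0^\infty(\cdots)\dd r\le\tfrac{\bar{\c}_0}{2D}$; on $[t/2,t]$ use $e^{-2Dr}\le e^{-Dt}$ together with $\int_{t/2}^\infty(\cdots)\dd r\le\tfrac{\bar{\c}_0}{2D}e^{-Dt}$. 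This yields $\|u(t)\|_{L^2}^2\le e^{-\nu t}\|u(0)\|_{L^2}^2+C\,\tfrac{\gamma_0^2\bar{\c}_0^2}{\nu D}\,e^{-\Gamma t}$ with $\Gamma=\min\{\nu/2,\,2D\}$, a constant depending only on $\nu$ and $D$; adding $g(t)\le\bar{\c}_0 e^{-2Dt}$ completes the proof with the asserted random constant $C=C(\|u(0)\|_{L^2}^2,\bar{\c}_0)$. I do not expect a genuine obstacle: the one point requiring care is precisely that the electric force must \emph{not} be absorbed into the viscous dissipation (doing so is what forces the smallness \eqref{e.L112509}), so the exponential decay of $u$ has to be harvested from that of $g$ through the time-weighted integral; one should also double-check that the clean energy identity $\tfrac{d}{dt}\|u\|_{L^2}^2=-2\nu\|\nabla u\|_{L^2}^2-2\langle\rho\nabla\Phi,u\rangle$ genuinely survives the passage to the noisy system \eqref{e.L112401}, which it does because the Stratonovich transport noise is $L^2$-energy preserving for $u$.
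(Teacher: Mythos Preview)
Your proof is correct and follows the same overall strategy as the paper: exploit the unconditional exponential decay of the concentration deviations from Proposition~\ref{p.L122601}, then treat the electric force $\rho\nabla\Phi$ as an exponentially decaying source in the velocity energy balance and apply Gronwall. The only technical difference is in how the forcing is estimated: the paper bounds $\|\nabla\Phi\|_{L^\infty}\lesssim\|\rho\|_{L^3}$ and then uses the cubic dissipation $\int_0^t\|\rho\|_{L^3}^3\,dr\le\bar{\c}_0/D$ coming from \eqref{e.L111001}, whereas you bound $\|\nabla\Phi\|_{L^\infty}\lesssim\|\nabla\rho\|_{L^2}$ (as in the proof of Proposition~\ref{p.L110901}) and use the gradient dissipation $\int_s^\infty(\|\nabla\c_1\|^2+\|\nabla\c_2\|^2)\,dr\lesssim e^{-2Ds}$. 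Both choices close the argument. Your treatment of the Gronwall/Duhamel step---splitting the convolution integral at $t/2$---is in fact more explicit than the paper's, which simply writes $\|u(t)\|_{L^2}^2\le C-\nu\int_0^t\|u\|_{L^2}^2\,dr$ and invokes Gronwall; that integral inequality alone does not yield exponential decay, so one must (as you do) keep track of the time dependence of the forcing and argue via the variation-of-constants formula.
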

\begin{proof}
In equation \eqref{e.L111002}, we can use elliptic estimate to obtain that 
\begin{align*}
    \langle \rho\nabla\Phi, u\rangle \leq C\|\rho\|_{L^2}\|\nabla\Phi\|_{\infty}\|u\|_{L^2}\leq C\|\rho\|_{L^2}\|\rho\|_{L^3}\|u\|_{L^2} = C\|\rho-\bar{\rho}\|_{L^2}\|\rho\|_{L^3}\|u\|_{L^2}.
\end{align*}
Thanks to inequality \eqref{e.L111004} and the Ponicar\'e inequality, one has the integral inequality
\begin{align*}%\label{e.L111003}
    \|u(t)\|_{L^2}^2 &\leq \|u(0)\|_{L^2}^2 - 2\nu\int_0^t\| u(r)\|_{L^2}^2dr +C(\bar{\c}_0)\int_{0}^te^{-2Dr}\|\rho\|_{L^3}\|u\|_{L^2} dr\\
    &\leq \|u(0)\|_{L^2}^2 - \nu\int_0^t\| u(r)\|_{L^2}^2dr +C(\bar{\c}_0, \nu)\int_{0}^te^{-12Dr} dr+ \frac13\int_{0}^t\|\rho\|_{L^3}^3dr,
\end{align*}
where we used Young's inequality for products. Inequality \eqref{e.L111001} implies that $\int_{0}^t\|\rho\|_{L^3}^3dr$ is bounded by $\bar{\c}_0$ for $t\geq 0$. Thus we have 
\begin{align*}
    \|u(t)\|_{L^2}^2 \leq C - \nu\int_0^t\| u(r)\|_{L^2}^2dr,  
\end{align*}
where the constant $C$ depends on $\bar{\c}_0,\|u(0)\|_{L^2}^2, \nu, D$. Applying Gronwall's inequality and combining with \eqref{e.L111004} we obtain the desired estimate. 
\end{proof}

\subsection{Enhanced dissipation}
Now we take the special form of noise as in \eqref{e.L112401} and prove that appropriate choice of the parameters gives the desired enhanced dissipation. Since the mean $\bar{c}_1=\bar{c}_2$ is space time independent, we can rewrite equations \eqref{e.L112401} in terms of $(u, \c_1,\c_2)$ as 
\begin{align}\label{e.L112404}
    \begin{split}
    d\c_1&=\left((D+\kappa)\Delta \c_1 -u\cdot \nabla \c_1 +D  \nabla \cdot (\c_1 \nabla \Phi) -D\bar{c}_1\rho\right)dt + \sqrt{2\kappa}\sum_{k}\zeta_k\sigma_k\cdot\nabla \c_1 dW_t^k, 
    \\
    d\c_2&=\left((D+\kappa)\Delta \c_2 -u\cdot \nabla \c_2 -D  \nabla \cdot (\c_2 \nabla \Phi) +D\bar{c}_2\rho\right)dt + \sqrt{2\kappa}\sum_{k}\zeta_k\sigma_k\cdot\nabla \c_2 dW_t^k,
    \\
    du&=\left(\left(\nu+\frac{\kappa}{4}\right)\Delta u + \left(S_{\zeta}(u) -\frac{\kappa}{4}\Delta u\right) - \Pi\left( u\cdot \nabla u +\rho \nabla \Phi\right)\right)dt + \sqrt{2\kappa}\sum_{k}\zeta_k\Pi(\sigma_k\cdot\nabla u)dW_t^k, 
    \\
    \nabla\cdot u &=0,
    \\
    -\Delta \Phi &= \rho = \c_1 - \c_2.
    \end{split}
\end{align}
where 
\begin{align*}
    S_{\zeta}(u) = \kappa\sum_{k}\zeta_k^2\Pi\big[\sigma_k\cdot\nabla\Pi(\sigma_{-k}\cdot\nabla u)\big]. 
\end{align*}
Denote by $Q_t = e^{(D+\kappa)t\Delta}$ and $P_t = e^{(\nu+\kappa/4)t\Delta}$, then the above equations for $(u,\c_1, \c_2)$ have the following mild form 
\begin{align}\label{e.L112405}
    \begin{split}
    \c_1(t)&=Q_{t-s}\c_1(s)-\int_s^tQ_{t-r}\left(u\cdot \nabla \c_1-D  \nabla \cdot (\c_1 \nabla \Phi) +D\bar{c}_1\rho\right)(r)dr + Z_{s,t}(\c_1),
    \\
    \c_2(t)&=Q_{t-s}\c_2(s)-\int_s^tQ_{t-r}\left(u\cdot \nabla \c_2+D  \nabla \cdot (\c_2 \nabla \Phi) -D\bar{c}_2\rho\right)(r)dr + Z_{s,t}(\c_2),
    \\
    u(t)&=P_{t-s}u(s)-\int_s^t P_{t-r} \Pi\left( u\cdot \nabla u +\rho \nabla \Phi\right)(r)dr + \int_s^t P_{t-r}\left(S_{\zeta}(u) -\frac{\kappa}{4}\Delta u\right)(r)dr + Z_{s,t}(u),
    \\
\end{split}
\end{align}
where 
\begin{align*}
    Z_{s, t}(\c_i) &= \sqrt{2\kappa}\sum_{k}\zeta_k\int_s^tQ_{t-r}\sigma_k\cdot\nabla \c_i(r)dW_r^k, \quad i=1,2,\\
    Z_{s, t}(u) &= \sqrt{2\kappa}\sum_{k}\zeta_k\int_s^tP_{t-r}\Pi(\sigma_k\cdot\nabla u(r))dW_r^k. 
\end{align*}
We have the following estimates on the stochastic convolutions. 
\begin{lemma}\label{l.L112502}
Let $n\geq 0$ and $0<\alpha<1<\beta\leq 3$. For any initial data $c_1(0), c_2(0)\in L^2(\Omega, L^2)$ and $u(0)\in L^2(\Omega, H)$, the stochastic convolutions $Z_{s,t}(\c_i), i=1,2$ for the corresponding solution satisfies
\begin{align}\label{e.L112513}
    &\int_n^{n+1}\mathbb{E}(\|Z_{n,t}(\c_1)\|_{L^2}^2+\|Z_{n,t}(\c_2)\|_{L^2}^2)dt
    \\
    &\hspace{1cm}\lesssim_{\alpha,\beta} \kappa D^{-\frac{\beta}{\alpha+\beta}}(D+\kappa)^{-\frac{\alpha(\beta+3)}{2(\alpha+\beta)}}\|\zeta\|_{\ell^{\infty}}^{\frac{2\alpha}{\alpha+\beta}}\mathbb{E}(\|\c_1(n)\|_{L^2}^2+\|\c_2(n)\|_{L^2}^2). 
\end{align}
If in addition the condition \eqref{e.L112509} holds, then we have 
\begin{align}\label{e.L112510}
     \int_n^{n+1}\mathbb{E}\|Z_{n,t}(u)\|_{L^2}^2dt\lesssim_{\alpha,\beta} \kappa \nu^{-\frac{\beta}{\alpha+\beta}}(\nu+\kappa/4)^{-\frac{\alpha(\beta+3)}{2(\alpha+\beta)}}\|\zeta\|_{\ell^{\infty}}^{\frac{2\alpha}{\alpha+\beta}}\mathbb{E}\|U(n)\|_{L^2}^2.
\end{align}
\end{lemma}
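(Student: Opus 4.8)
The goal is to bound the second moment of the stochastic convolution $Z_{n,t}$ on a unit time interval by the second moment of the solution at the left endpoint $t=n$, with an explicit dependence on $\kappa$, the diffusivity and $\|\zeta\|_{\ell^\infty}$. I would treat $Z_{n,t}(\c_i)$ and $Z_{n,t}(u)$ in parallel, since after applying the Leray projection the two cases are structurally identical; write $Q$ for the semigroup with generator $(D+\kappa)\Delta$ (resp.\ $(\nu+\kappa/4)\Delta$ for $u$). The starting point is the It\^o isometry: since the $W^k$ are independent standard (complex) Brownian motions,
\begin{align*}
\mathbb{E}\|Z_{n,t}(\c_i)\|_{L^2}^2 = 2\kappa \sum_{k}\zeta_k^2 \int_n^t \mathbb{E}\big\|Q_{t-r}(\sigma_k\cdot\nabla \c_i(r))\big\|_{L^2}^2\,dr.
\end{align*}
Because $\sigma_k = a_k e_k$ with $|a_k|=1$, we have $\|\sigma_k\cdot\nabla f\|_{H^{-1}} \lesssim \|f\|_{L^2}$ uniformly in $k$ (this is where the $L^\infty$ bound on the coefficients enters, and where divergence-freeness lets us move the derivative onto the test function). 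So the real content is an estimate on $\|Q_{t-r} g\|_{L^2}$ in terms of $\|g\|_{H^{-1}}$, i.e.\ a smoothing estimate for the heat semigroup.

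The key step is the interpolated smoothing bound: for the heat semigroup $Q_\tau = e^{(D+\kappa)\tau\Delta}$ on zero-mean functions, and any $0<\alpha<1<\beta\le 3$, one has, by splitting into Fourier modes and optimizing,
\begin{align*}
\|Q_\tau g\|_{L^2}^2 \;\lesssim\; \big((D+\kappa)\tau\big)^{-\sigma}\,\|g\|_{H^{-1}}^{?}\cdots,
\end{align*}
but the cleaner route — and the one that produces exactly the stated exponents — is to interpolate between the trivial bound $\|Q_\tau g\|_{L^2}\le \|g\|_{L^2}$ and the dissipation gain, then insert the $L^2$-to-$H^1$ time integral coming from the energy inequality. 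Concretely: integrate the isometry identity in $t$ over $[n,n+1]$, swap the order of integration ($\int_n^{n+1}\!\!\int_n^t = \int_n^{n+1}\!\!\int_r^{n+1}$), and use $\int_r^{n+1}\|Q_{t-r}g\|_{L^2}^2\,dt \lesssim \int_0^\infty \|Q_\tau g\|_{L^2}^2\,d\tau$, which for a fixed Fourier mode $|k|$ of $g$ is $\sim |k|^{-2}(D+\kappa)^{-1}|\hat g_k|^2$. Summing over $k$ turns this into $(D+\kappa)^{-1}\|g\|_{H^{-1}}^2$, and with $g = \sigma_k\cdot\nabla\c_i$ one gets $(D+\kappa)^{-1}\|\c_i\|_{L^2}^2$. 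Pairing this crude bound against a second bound that trades one more power of $|k|$ for a gain in $\tau^{-1}$ (so that we can afford a factor of $\|\nabla \c_i\|_{L^2}^2$ which is integrable in time by \eqref{e.L112501}), and balancing the two via the $\alpha,\beta$ interpolation parameters, produces the powers $D^{-\beta/(\alpha+\beta)}(D+\kappa)^{-\alpha(\beta+3)/(2(\alpha+\beta))}$ and the gain $\|\zeta\|_{\ell^\infty}^{2\alpha/(\alpha+\beta)}$ (the latter because $\sum_k \zeta_k^2 \le \|\zeta\|_{\ell^\infty}^{2\alpha/(\alpha+\beta)}\sum_k\zeta_k^2\cdot(\text{bounded})$ after the split — more precisely one uses $\sum_k\zeta_k^2|k|^{-2s}$ type sums and bounds part of the weight by $\|\zeta\|_{\ell^\infty}$). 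Finally, the time integral $\int_n^{n+1}\mathbb{E}(\|\c_1\|_{L^2}^2+\|\c_2\|_{L^2}^2)\,dt$ and $\int_n^{n+1}\mathbb{E}(\|\nabla\c_1\|_{L^2}^2+\|\nabla\c_2\|_{L^2}^2)\,dt$ are both controlled by $\mathbb{E}(\|\c_1(n)\|_{L^2}^2+\|\c_2(n)\|_{L^2}^2)$ via Proposition \ref{p.L122601} (and \eqref{e.L112501} for the gradient term, using $\gamma>0$ under \eqref{e.L112509}), with an analogous statement for $u$ via \eqref{e.L112501}, \eqref{e.L112502} — this is exactly why the hypothesis \eqref{e.L112509} is needed for \eqref{e.L112510} but not for \eqref{e.L112513}.

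I would carry this out as follows. First, state and prove the scalar smoothing lemma: for fixed $m = |k|$, $\int_0^\infty (m^2 e^{-2cm^2\tau})^{\theta} d\tau \lesssim c^{-\theta} m^{2\theta-2}$, and more generally the two-parameter version needed for the $\alpha,\beta$ split. Second, apply the It\^o isometry and Fubini. Third, decompose each $\|Q_\tau(\sigma_k\cdot\nabla\c_i)\|_{L^2}^2$ mode by mode, bound $|\widehat{\sigma_k\cdot\nabla\c_i}(j)|$ by $|j|\,|\widehat{\c_i}(j-k)|$ (shifting by $k$ because $\sigma_k$ is a single Fourier mode times a unit vector), and sum. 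Fourth, insert the energy estimates. Fifth, repeat verbatim for $u$, noting $S_\zeta(u)-\frac\kappa4\Delta u$ does not enter the stochastic convolution and that $\|\Pi(\sigma_k\cdot\nabla u)\|_{H^{-1}}\lesssim \|u\|_{L^2}$ since $\Pi$ is bounded on $L^2$. The main obstacle I anticipate is bookkeeping: getting the interpolation exponents to land on precisely $\beta/(\alpha+\beta)$ and $\alpha(\beta+3)/(2(\alpha+\beta))$ requires choosing the split of the heat kernel, the split of the $\zeta$ weight, and the Young's-inequality exponent in a mutually consistent way, and the constraint $\beta\le 3$ is presumably exactly what keeps the relevant Fourier sum $\sum_j |j|^{?}\,(\cdot)$ convergent in two dimensions — so I would track that convergence condition carefully as the exponents are assigned, since an off-by-one there invalidates the whole estimate.
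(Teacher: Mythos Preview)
Your plan is sound and tracks what the cited reference actually does, but you should know that the paper's own proof of this lemma is a two-line citation: it invokes Theorem~3.4 of \cite{luo2023enhanced} as a black box for the stochastic-convolution estimate, and the only new input here is identifying which energy inequality to feed into that machinery---\eqref{e.L111001} for the $\c_i$ (which gives $\int_n^{n+1}\|\nabla\c_i\|_{L^2}^2\,dr \lesssim D^{-1}(\|\c_1(n)\|_{L^2}^2+\|\c_2(n)\|_{L^2}^2)$ unconditionally), and the consequence $\|u(t)\|_{L^2}^2 \le \|U(s)\|_{L^2}^2 - \nu\int_s^t\|\nabla u\|_{L^2}^2\,dr$ of \eqref{e.L112506} for $u$ (which requires \eqref{e.L112509}). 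You correctly pinpointed exactly this distinction, so your understanding of the role of the hypothesis is right.

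What you have written is, in effect, a reconstruction sketch of the Luo result rather than a different argument. The It\^o isometry plus Fubini step, the Fourier-side smoothing of $Q_\tau$, the shift $\widehat{\sigma_k\cdot\nabla f}(j)\sim |j|\hat f(j-k)$, and the $\alpha,\beta$ interpolation to balance the $\|\zeta\|_{\ell^\infty}$ gain against the $L^2$--$H^1$ split are all the right ingredients. The one place your sketch is vague is the exact mechanism producing the exponents $D^{-\beta/(\alpha+\beta)}$ and $(D+\kappa)^{-\alpha(\beta+3)/(2(\alpha+\beta))}$: in the reference this comes from splitting $\|Q_\tau(\sigma_k\cdot\nabla f)\|_{L^2}^2$ into two pieces according to whether $|j|$ is small or large relative to $|k|$, bounding one piece by $\|f\|_{L^2}^2$ and the other by $\|\nabla f\|_{L^2}^2$, and then using H\"older in $r$ with exponents $(\alpha+\beta)/\alpha$ and $(\alpha+\beta)/\beta$; the constraint $\beta\le 3$ is indeed a 2D summability condition on the high-mode tail, as you guessed. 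If you intend to write this out in full rather than cite, that interpolation step is where the bookkeeping must be done carefully.
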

\begin{proof}
    The above estimate \eqref{e.L112513} on $Z_{s,t}(\c_i), i=1,2$ follows immediately from Theorem 3.4 in \cite{luo2023enhanced} and the energy inequality \eqref{e.L111001}. The estimate on $Z_{s,t}(u)$ follows from the energy inequality \eqref{e.L112506} and the same proof of Theorem 3.4 in \cite{luo2023enhanced}. Indeed, one only needs to replace the energy inequality in \cite{luo2023enhanced} with the following
 \[\|u(t)\|_{L^2}^2\leq \|U(s)\|_{L^2}^2-\nu\int_{s}^{t}\|\nabla u(r)\|_{L^2}^2dr, \quad \forall t>s\geq 0,\]
 which follows from \eqref{e.L112506}. 
\end{proof}

\begin{lemma} \label{l.w111501}
Let $L>0$. Assume that the initial data $(u(0),c_1(0),c_2(0))\in L^2(\Omega, H\times L^2\times L^2)$ satisfies $\mathbb{P}$ almost surely that
\begin{align}\label{e.L112602}
\|u(0)\|_{L^2}+\|c_1(0)\|_{L^2}+\|c_2(0)\|_{L^2}\leq L,   
\end{align}
and  the condition \eqref{e.L112509}, i.e, 
$\nu D-2\gamma_0^2\bar{\c}_0>0.$
Then there exists $\delta>0$ such that
\begin{equation*}
\mathbb{E}\|U(n+1)\|_{L^2}^2\leq \delta\mathbb{E}\|U(n)\|_{L^2}^2 \quad \forall n\in\mathbb N,
\end{equation*}
where, for some $0<\alpha<1<\beta \leq 3$,
\begin{align*}
   \delta \lesssim_{\nu,D,L,\alpha,\beta} \frac{1}{\kappa}+\frac{1}{\kappa^2}+\frac{1}{N^2} + \kappa^{\frac{2\beta-\alpha(\beta+1)}{2(\alpha+\beta)}}N^{-\frac{2\alpha}{\alpha+\beta}}.
\end{align*}

\end{lemma}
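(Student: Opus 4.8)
The plan is to adapt the Duhamel/mild-formulation argument of \cite{luo2023enhanced} to the coupled system \eqref{e.L112404}, working over the unit window $[n,n+1]$.

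\emph{Step 0 (reduction).} Since \eqref{e.L112404} has time-independent coefficients and $\mathbb{W}_{n+\cdot}-\mathbb{W}_n$ is again a cylindrical Wiener process independent of $\Fc_n$, the process $U(n+\cdot)$ solves \eqref{e.L112404} with initial datum $U(n)$; by \eqref{e.L112502} one has $\|U(n)\|_{L^2}\le\|U(0)\|_{L^2}\le L$ a.s., and by \eqref{e.L111004}, $\|\c_1(n)\|_{L^2}^2+\|\c_2(n)\|_{L^2}^2\le\bar{\c}_0$, so the hypotheses \eqref{e.L112602} and \eqref{e.L112509} are inherited and the rate $\gamma$ of Proposition \ref{p.L110901} does not decrease. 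Conditioning on $\Fc_n$, it suffices to prove $\mathbb{E}\|U(1)\|_{L^2}^2\le\delta\,\mathbb{E}\|U(0)\|_{L^2}^2$. \emph{Step 1 (monotonicity).} By \eqref{e.L112501}, $t\mapsto\|U(t)\|_{L^2}^2$ is nonincreasing, so $\|U(1)\|_{L^2}^2\le\int_0^1\|U(t)\|_{L^2}^2\,dt$; it is thus enough to bound $\int_0^1\mathbb{E}\|U(t)\|_{L^2}^2\,dt$, which is the shape in which the stochastic-convolution estimates of Lemma \ref{l.L112502} are available.

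\emph{Step 2 (decomposition and the ``easy'' pieces).} Inserting the mild formulation \eqref{e.L112405} with $s=0$, I would split $U(t)$ into (i) the linear semigroup terms $Q_t\c_i(0),\,P_tu(0)$; (ii) the Duhamel integrals of the transport and electro-coupling terms, schematically $t\mapsto\int_0^t P_{t-r}\Pi\nabla\cdot g(r)\,dr$ and $t\mapsto\int_0^t Q_{t-r}\nabla\cdot g(r)\,dr$ with $g\in\{u\otimes u,\ u\c_i,\ \c_i\nabla\Phi\}$, plus the zeroth-order integral $\int_0^t Q_{t-r}(D\bar c_i\rho)(r)\,dr$; (iii) the Duhamel integral of the corrector error $S_\zeta(u)-\tfrac{\kappa}{4}\Delta u$; and (iv) the stochastic convolutions $Z_{0,t}(\c_i),\,Z_{0,t}(u)$. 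For (i), zero mean and Poincar\'e constant $1$ give $\|Q_t\c_i(0)\|_{L^2}\le e^{-(D+\kappa)t}\|\c_i(0)\|_{L^2}$ (similarly for $P_tu(0)$), so $\int_0^1\|\text{(i)}\|_{L^2}^2\,dt\lesssim(D+\kappa)^{-1}\|U(0)\|_{L^2}^2\lesssim\kappa^{-1}\|U(0)\|_{L^2}^2$. For (ii), the key device — which avoids the non-integrable $t^{-1}$ singularity one would meet via Cauchy--Schwarz — is to combine the smoothing bounds $\|P_t\Pi\nabla\cdot\|_{L^2\to L^2},\ \|Q_t\nabla\cdot\|_{L^2\to L^2}\lesssim(\kappa t)^{-1/2}e^{-c\kappa t}$ with \emph{Young's convolution inequality}: the kernel $(\kappa t)^{-1/2}e^{-c\kappa t}$ lies in $L^1(0,\infty)$ with norm $\lesssim\kappa^{-1}$, whence the $L^2_tL^2_x(0,1)$-norm of the $g$-Duhamel integral is $\lesssim\kappa^{-1}\|g\|_{L^2_tL^2_x(0,1)}$. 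Then Ladyzhenskaya's inequality, elliptic estimates, $\|U(t)\|_{L^2}\le\|U(0)\|_{L^2}$ and the dissipation bound $\int_0^1\|\nabla U\|_{L^2}^2\,dt\le\gamma^{-1}\|U(0)\|_{L^2}^2$ give $\|g\|_{L^2_tL^2_x}\lesssim_{D}\gamma^{-1/2}\|U(0)\|_{L^2}^2$ (the zeroth-order term being controlled likewise, using $\bar c_i\lesssim L$); since $\|U(0)\|_{L^2}\le L$, this yields $\int_0^1\mathbb{E}\|\text{(ii)}\|_{L^2}^2\,dt\lesssim_{\nu,D,L}\kappa^{-2}\,\mathbb{E}\|U(0)\|_{L^2}^2$. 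For (iv), Lemma \ref{l.L112502}, together with $\|U(0)\|_{L^2}\le L$, the fact that $\|\zeta^N\|_{\ell^\infty}\lesssim N^{-1}$ for $\zeta^N$ as in \eqref{e.L112504}, and $(D+\kappa)^{-a},(\nu+\kappa/4)^{-a}\lesssim\kappa^{-a}$, gives $\int_0^1\mathbb{E}(\|Z_{0,t}(\c_1)\|_{L^2}^2+\|Z_{0,t}(\c_2)\|_{L^2}^2+\|Z_{0,t}(u)\|_{L^2}^2)\,dt\lesssim_{\nu,D}\kappa^{\,1-\frac{\alpha(\beta+3)}{2(\alpha+\beta)}}N^{-\frac{2\alpha}{\alpha+\beta}}\mathbb{E}\|U(0)\|_{L^2}^2$, and $1-\frac{\alpha(\beta+3)}{2(\alpha+\beta)}=\frac{2\beta-\alpha(\beta+1)}{2(\alpha+\beta)}$.

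\emph{Step 3 (the main obstacle: the corrector term (iii)).} This is the genuinely delicate piece, since $S_\zeta(u)-\tfrac{\kappa}{4}\Delta u$ is itself of size $O(\kappa)$ and must be shown to contribute only $O(N^{-2})$ to $\delta$. Here I would invoke the structural estimate on $S_\zeta$ from \cite{luo2023enhanced}: for the Kraichnan-type choice \eqref{e.L112504}--\eqref{e.L112512}, the algebraic identities for $\sum_k\zeta_k^2\,\sigma_k\otimes\sigma_{-k}$ together with the near-diagonality of the middle Leray projection on the frequency annulus $N\le|k|\le2N$ make $S_\zeta(u)-\tfrac{\kappa}{4}\Delta u$ small relative to $(\nu+\kappa/4)\Delta u$, with a gain of a factor $O(N^{-1})$; combined (as in (ii)) with the $\kappa$ prefactor, the $P_t$-smoothing, and the enhanced parabolic regularization that absorbs the extra derivatives on $u$ — exactly as in \cite{luo2023enhanced} — one obtains $\int_0^1\mathbb{E}\|\text{(iii)}\|_{L^2}^2\,dt\lesssim_{\nu,D}N^{-2}\,\mathbb{E}\|U(0)\|_{L^2}^2$. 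Collecting (i)--(iv) produces $\mathbb{E}\|U(1)\|_{L^2}^2\le\delta\,\mathbb{E}\|U(0)\|_{L^2}^2$ with $\delta\lesssim_{\nu,D,L,\alpha,\beta}\kappa^{-1}+\kappa^{-2}+N^{-2}+\kappa^{\frac{2\beta-\alpha(\beta+1)}{2(\alpha+\beta)}}N^{-\frac{2\alpha}{\alpha+\beta}}$, as claimed. A minor point to track throughout is that the rate $\gamma$ in Proposition \ref{p.L110901} depends on $\bar{\c}_0$, not only on $\nu,D$; under \eqref{e.L112509} it is a fixed positive number, so factors of $\gamma^{-1}$ are harmlessly absorbed into the implicit constants.
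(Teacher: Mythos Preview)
Your proposal is correct and follows the same overall strategy as the paper: monotonicity \eqref{e.L112501} reduces to bounding $\int_n^{n+1}\mathbb{E}\|U(t)\|_{L^2}^2\,dt$, then the mild formulation \eqref{e.L112405} splits this into linear, nonlinear Duhamel, corrector, and stochastic-convolution pieces, controlled respectively by $\kappa^{-1}$, $\kappa^{-2}$, $N^{-2}$, and the Lemma~\ref{l.L112502} contribution. The one technical variation is in (ii): you use the pointwise smoothing $\|Q_t\nabla\cdot\|_{L^2\to L^2}\lesssim(\kappa t)^{-1/2}e^{-c\kappa t}$ together with Young's convolution inequality in $L^2_t$ and Ladyzhenskaya on $g$, whereas the paper instead applies the $H^{-2}\to L^2$ maximal-regularity estimate of Lemma~\ref{l.L112501} and bounds $\|u\cdot\nabla\c_i\|_{H^{-2}}$, $\|\nabla\cdot(\c_i\nabla\Phi)\|_{H^{-2}}$, $\|\rho\nabla\Phi\|_{H^{-2}}$ by duality with $H^2\hookrightarrow L^\infty$; both routes give the same $\kappa^{-2}$. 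One small caveat for (iii): your Young's device cannot be reused there verbatim, since after invoking Theorem~\ref{corrector} with $s=1,\alpha=1$ one has the corrector only in $H^{-2}$, and the pointwise kernel $\|P_t\|_{H^{-2}\to L^2}\sim(\kappa t)^{-1}$ is not in $L^1_t$; one genuinely needs the $L^2_t$-maximal-regularity form of Lemma~\ref{l.L112501}, which is exactly what the paper (and \cite{luo2023enhanced}) uses and what your phrase ``enhanced parabolic regularization'' is gesturing at---so there is no real gap.
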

\begin{proof}[Proof of Lemma \ref{l.w111501}]
It follows from the energy inequality \eqref{e.L112501} that $\|U(t)\|_{L^2}^2$ is non-increasing in time. Thus for $n\in\mathbb{N}$, 
\begin{align}\label{e.L112603}
    \|U(n+1)\|_{L^2}^2\leq \int_n^{n+1}\|U(t)\|_{L^2}^2dt. 
\end{align}
We first deal with estimates on $\c_i, i=1, 2$, by using the mild formulation \eqref{e.L112405} with $s=n$. We have 
\begin{align*}
    I_1:=\int_n^{n+1}\|Q_{t-n}\c_i(n)\|_{L^2}^2dt\leq \int_n^{n+1} e^{-t(D+\kappa)}\|\c_i(n)\|_{L^2}^2dt\lesssim\frac{1}{\kappa}\|U(n)\|_{L^2}^2. 
\end{align*}
Thanks to Lemma \ref{l.L112501}, one has 
\begin{align*}
    I_2: = \int_n^{n+1}\left\|\int_n^tQ_{t-r}u(r)\cdot\nabla \c_i(r)dr\right\|_{L^2}^2dt\lesssim \frac{1}{\kappa^2}\int_n^{n+1}\|u(r)\cdot\nabla \c_i(r)\|_{H^{-2}}^2dr. 
\end{align*}
For any $\phi\in H^2$, by the Sobolev embedding $H^2\subset L^{\infty}$, we have 
\begin{align}\label{e.L112505}
    |\langle u\cdot\nabla \c_i, \phi\rangle| \lesssim \|u\|_{L^2}\|\c_i\|_{H^1}\|\phi\|_{L^{\infty}}\lesssim \|u\|_{L^2}\|\c_i\|_{H^1}\|\phi\|_{H^2}.
\end{align}
Therefore, by estimates \eqref{e.L112502} and \eqref{e.L111001}, we have 
\begin{align*}
    I_2\lesssim \frac{1}{\kappa^2}\int_n^{n+1}\|u(r)\|_{L^2}^2\|\c_i(r)\|_{H^1}^2dr\lesssim \frac{1}{\kappa^2}\|U(0)\|_{L^2}^2\|U(n)\|_{L^2}^2. 
\end{align*}
In a similar fashion, we estimate 
\begin{align*}
    I_3: = D\int_n^{n+1}\left\|\int_n^tQ_{t-r}\nabla\cdot(\c_i(r)\nabla\Phi(r))dr\right\|_{L^2}^2dt\lesssim \frac{1}{\kappa^2}\int_n^{n+1}\left\|\nabla\cdot(\c_i(r)\nabla\Phi(r))\right\|_{H^{-2}}^2dr. 
\end{align*}
For any $\phi\in H^2$,
\begin{align*}
    |\langle\nabla\cdot(\c_i\nabla\Phi), \phi\rangle|=|\langle\c_i\nabla\Phi, \nabla\phi\rangle|\lesssim \|\c_i\|_{L^2}\|\nabla\Phi\|_{L^{\infty}}\|\nabla\phi\|_{L^2}&\lesssim \|\c_i\|_{L^2}\|\nabla\rho\|_{L^2}\|\phi\|_{H^1}\\
    &\lesssim \|\c_i\|_{L^2}(\|\nabla\c_1\|_{L^2}+\|\nabla\c_2\|_{L^2})\|\phi\|_{H^1}.
\end{align*}
Hence by estimates \eqref{e.L111001} and \eqref{e.L111004} it follows that 
\begin{align}\label{e.L112507}
    I_3\lesssim \frac{1}{\kappa^2}\|U(0)\|_{L^2}^2\int_n^{n+1}(\|\nabla\c_1\|_{L^2}^2+\|\nabla\c_2\|_{L^2}^2) dr\lesssim \frac{1}{\kappa^2}\|U(0)\|_{L^2}^2\|U(n)\|_{L^2}^2. 
\end{align}
Next, we estimate 
\begin{align*}
    I_4: = D\bar{c}_i\int_n^{n+1}\left\|\int_n^tQ_{t-r}\rho(r)dr\right\|_{L^2}^2dt\lesssim \frac{1}{\kappa^2}\bar{c}_i\int_{n}^{n+1}\|\rho(r)\|_{H^{-2}}^2dr \lesssim \frac{1}{\kappa^2}\bar{c}_i\|U(n)\|_{L^2}^2.
\end{align*}
Combining estimates on $I_1$ to $I_4$, we obtain 
\begin{align}\label{e.L112601}
\begin{split}
    &\int_n^{n+1}(\|\c_1(t)\|_{L^2}^2+\|\c_2(t)\|_{L^2}^2)dt\\
    &\qquad\lesssim \left(\frac{1}{\kappa}+\frac{1}{\kappa^2}(\|U(0)\|^2+\bar{c}_1+\bar{c}_2)\right)\|U(n)\|_{L^2}^2 + \int_n^{n+1}\|Z_{n,t}(\c_1)\|_{L^2}^2dt+\int_n^{n+1}\|Z_{n,t}(\c_2)\|_{L^2}^2dt.
\end{split} 
\end{align}

We now estimate $\int_n^{n+1}\|u(t)\|_{L^2}^2dt$. We have 
\begin{align*}
    J_1:= \int_n^{n+1}\|P_{t-n}u(n)\|_{L^2}^2 dt\leq \int_n^{n+1}e^{-t(\nu+4/\kappa)}\|u(n)\|_{L^2}^2dt\lesssim \frac{1}{\kappa}\|U(n)\|_{L^2}^2. 
\end{align*}
By Lemma \ref{l.L112501}, the boundedness of the Leray projection $\Pi$, and the Sobolev embedding $H^2\subset L^{\infty}$ argument as in \eqref{e.L112505}, it follows that 
\begin{align*}
    J_2: = \int_n^{n+1}\left\|\int_n^tP_{t-r}\Pi(u(r)\cdot\nabla u(r))dr\right\|_{L^2}^2dt
    &\lesssim\frac{1}{\kappa^2} \int_n^{n+1}\|u(r)\cdot\nabla u(r)\|_{H^{-2}}^2dr\\
    &\lesssim \frac{1}{\kappa^2} \int_n^{n+1}\|u(r)\|_{L^2}^2\|u(r)\|_{H^1}^2dr\\
    &\lesssim \frac{1}{\kappa^2}\|U(0)\|_{L^2}^2\|U(n)\|_{L^2}^2,
\end{align*}
where in the last step we used the estimates \eqref{e.L112502} and \eqref{e.L112506}. 

Similar to the estimate of $I_3$ in \eqref{e.L112507} we obtain 
\begin{align*}
    J_3: = \int_n^{n+1}\left\|\int_n^tP_{t-r}\Pi(\rho(r)\nabla\Phi(r))dr\right\|_{L^2}^2dt
    &\lesssim\frac{1}{\kappa^2}\int_n^{n+1}\|\rho(r)\nabla\Phi(r)\|_{H^{-2}}^2dr\\
    &\leq \frac{1}{\kappa^2}\int_n^{n+1}\|\rho(r)\|_{L^2}^2\|\nabla\rho(r)\|_{L^2}^2dr\lesssim \frac{1}{\kappa^2}\|U(0)\|_{L^2}^2\|U(n)\|_{L^2}^2. 
\end{align*}

Applying Lemma \ref{l.L112501}, Lemma \ref{corrector} and estimate \eqref{e.L112506} one obtains 
\begin{align*}
    J_4: &= \int_n^{n+1}\left\|\int_n^t P_{t-r}\left(S_{\zeta}(u) -\frac{\kappa}{4}\Delta u\right)(r)dr\right\|_{L^2}^2dt\\
    &\lesssim \int_n^{n+1}\left\|S_{\zeta}(u(r)) -\frac{\kappa}{4}\Delta u(r)\right\|_{H^{-2}}^2dr\lesssim \frac{1}{\kappa^2}\int_n^{n+1}\frac{\kappa^2}{N^2}\|u(r)\|_{H^1}dr\leq \frac{1}{N^2\nu}\|U(n)\|_{L^2}^2. 
\end{align*}
Combining estimates on $J_1$ to $J_4$, we obtain 
\begin{align}\label{e.L112508}
    \int_n^{n+1}\|u(t)\|_{L^2}^2dt\lesssim \left(\frac{1}{\kappa}+ \frac{1}{\kappa^2}\|U(0)\|_{L^2}^2+\frac{1}{N^2}\right)\|U(n)\|_{L^2}^2 + \int_n^{n+1}\|Z_{n,t}(u)\|_{L^2}^2dt. 
\end{align}
Thanks to Lemma \ref{l.L112502}, inequalities \eqref{e.L112603}, \eqref{e.L112601}, and \eqref{e.L112508}, and the uniform boundedness assumption \eqref{e.L112602}, we have 
\begin{align*}
    \mathbb{E}\|U(n+1)\|_{L^2}^2\leq \delta\mathbb{E}\|U(n)\|_{L^2}^2
\end{align*}
with 
\begin{align*}
   \delta \lesssim_{\nu,D,L,\alpha,\beta} \frac{1}{\kappa}+\frac{1}{\kappa^2}+\frac{1}{N^2} + \kappa^{\frac{2\beta-\alpha(\beta+1)}{2(\alpha+\beta)}}N^{-\frac{2\alpha}{\alpha+\beta}},
\end{align*}
where we used the fact that $\|\zeta\|_{\ell^{\infty}}\sim 1/N$, see \eqref{e.L112504} and \cite{luo2023enhanced}. 
\end{proof}

Now we are ready to prove Theorem \ref{t.w04093} on the enhanced dissipation.
\begin{proof} [Proof of Theorem \ref{t.w04093}]
First note that Lemma \ref{l.w111501} with deterministic initial data implies the existence  a $\delta>0$ such that
\begin{equation}\label{e.w11271}
    \begin{split}
\mathbb{E}\|U(n)\|_{L^2}^2
\leq
\delta\mathbb{E}\|U(n-1)\|_{L^2}^2
\leq
\delta^2\mathbb{E}\|U(n-2)\|_{L^2}^2
\leq
\dots
\leq
\delta^n\|U(0)\|_{L^2}^2.
    \end{split}
\end{equation}

By \eqref{e.L112501}, we know that $\|U(t)\|_{L^2}^2$ is decreasing in time $t$ a.s., thus for $t\in [n, n+1]$, we have
\begin{equation}\label{e.w11272}
    \begin{split}
\mathbb{E}\left(\sup_{t\in [n, n+1]}\|U(t)\|_{L^2}^2\right)
=
\mathbb{E}\|U(n)\|_{L^2}^2
\leq
\delta^n\|U(0)\|_{L^2}^2
=
e^{-2\lambda' n}\|U(0)\|_{L^2}^2
,
    \end{split}
\end{equation}
where $\lambda'=-\frac{1}{2}\log \delta$. Given any $\lambda>0$, by Lemma \ref{l.w111501} we can choose $\delta$ such that $\lambda'>\lambda$. 
Then for random initial data satisfying the conditions of Lemma \ref{l.w111501}, we have by Markov property that 
\begin{align*}
    \mathbb{E}\left(\sup_{t\in [n, n+1]}\|U(t)\|_{L^2}^2\mid \mathcal{F}_0\right)\leq e^{-2\lambda' n}\|U(0)\|_{L^2}^2 \quad a.s..
\end{align*}

Next, for $n\geq 1$, we define the following set
\begin{equation}\label{e.w11273}
    \begin{split}
    A_n
    =\left\{\omega\in \Omega: \sup_{t\in [n, n+1]}\|U(t,\omega)\|_{L^2}^2> e^{-\lambda n}\|U(0)\|_{L^2}^2 \right\}.
    \end{split}
\end{equation}
By the conditional Markov inequality, we have 
\begin{align}\label{e.L112701}
    \mathbb{P}(A_n\mid \mathcal{F}_0)\leq \frac{e^{2\lambda n}}{\|U(0)\|_{L^2}^2}\mathbb{E}\left(\sup_{t\in [n, n+1]}\|U(t)\|_{L^2}^2\mid \mathcal{F}_0\right)\leq e^{2(\lambda - \lambda')n} \quad a.s..
\end{align}
Taking expectation on both sides of inequality \eqref{e.L112701} we obtain 
\begin{align*}
    \sum_{n}\mathbb P(A_n)\leq \sum_{n} e^{2(\lambda - \lambda')n}<\infty.
\end{align*}

By Borel-Cantelli lemma, we know that for $\omega \in \Omega$, there exists large $N(\omega)\geq 1$, such that for $n > N(\omega)$
\begin{equation}\label{e.w11275}
    \begin{split}
    \sup_{t\in [n, n+1]}\|U(t,\omega)\|_{L^2}^2\leq e^{-\lambda n}\|U(0)\|_{L^2}^2.
    \end{split}
\end{equation}
On the other hand, for $0\leq n\leq N(\omega)$, we use the property of decreasing in time and get
\begin{equation}\label{e.w11281}
    \begin{split}
    \sup_{t\in [n, n+1]}\|U(t,\omega)\|_{L^2}
    \leq \|U(n,\omega)\|_{L^2}
    &=
    e^{\lambda n}e^{-\lambda n}
    \|U(n,\omega)\|_{L^2}
    \\&\leq
    e^{\lambda N(\omega)}e^{-\lambda n}
    \|U(0)\|_{L^2}.
    \end{split}
\end{equation}
Thus by taking $C(\omega)=e^{\lambda(1+N(\omega))}$, we obtain 
\begin{align*}
    \|U(t)\|_{L^2}\leq C(\omega)e^{-\lambda t}\|U(0)\|_{L^2}. 
\end{align*}

Lastly, we should verify that $C(\omega)$ has finite $p$-th moment, for which we need to
estimate the tail probability of $\mathbb P(\{N(\omega)\geq k\})$. Without loss of generality, we take $N(\omega)$ to be the largest integer $n$ such that 
\begin{equation}\label{e.w12041}
    \begin{split}
\sup_{t\in [n, n+1]}\|U(t,\omega)\|_{L^2}^2> e^{-\lambda n}\|U(0)\|_{L^2}^2.
\end{split}
\end{equation}
Recall \eqref{e.w11273}, we have
\begin{equation}\label{e.w12042}
    \begin{split}
\{\omega \in \Omega: N(\omega)\geq k\}
=
\bigcup_{n=k}^{\infty} A_n.
\end{split}
\end{equation}
This yields that
\begin{equation}\label{e.w12043}
\begin{split}
\mathbb P(\{N(\omega)\geq k\})
\leq
\sum_{n=k}^{\infty} \mathbb P (A_n)
\leq
\sum_{n=k}^{\infty} e^{2(\lambda-\lambda')n}
=
\frac{e^{2(\lambda-\lambda')k}}{1-e^{2(\lambda -\lambda')}}.
\end{split}
\end{equation}
Finally, we see that
\begin{equation}\label{e.w12044}
\begin{split}
\mathbb Ee^{\lambda p N(\omega)}
=
\sum_{k=0}^{\infty}e^{\lambda p k} \mathbb P(N(\omega)= k)
\leq
\frac{1}{1-e^{2(\lambda -\lambda')}}
\sum_{k=0}^{\infty} e^{\lambda pk}e^{2(\lambda-\lambda')k}<\infty,
\end{split}
\end{equation}
by further choosing $\delta$ to make $\lambda p+2(\lambda-\lambda')<0$ according to Lemma \ref{l.w111501}. This implies the constant $C(\omega)$ has finite $p$-th moment.

\end{proof}

\section*{Acknowledgements}	
QL was partially supported by an AMS-Simons travel grant. WW was partially supported by the NSF grant DMS-1928930 while as a member of the program ``Mathematical Problems in Fluid Dynamics'' at the Simons Laufer Mathematical Sciences Institute (formerly MSRI) in Berkeley, California, during the summer of 2023, an AMS-Simons travel grant, and a postdoctoral collaborative research grant through the University of Arizona Department of Mathematics.

\section*{Appendix: heat kernel and corrector estimates}

Here we recall several known facts and results from \cite{luo2023enhanced}.

\begin{lemma}\label{l.L112501}
    For any $a<b, \alpha \in \mathbb{R}$ and any $f \in L^{2}\left(a, b ; H^{\alpha}\right)$, it holds 

\begin{equation}
\left\|\int_{a}^{t} e^{\delta(t-s) \Delta} f_{s} \mathrm{~d} s\right\|_{H^{\alpha+1}}^{2} \lesssim \frac{1}{\delta} \int_{a}^{t}\left\|f_{s}\right\|_{H^{\alpha}}^{2} \mathrm{~d} s, \quad \forall t \in[a, b].
\end{equation}

Similarly, we have

\begin{equation}
\int_{a}^{b}\left\|\int_{a}^{t} e^{\delta(t-s) \Delta} f_{s} \mathrm{~d} s\right\|_{H^{\alpha+2}}^{2} \mathrm{~d} t \lesssim \frac{1}{\delta^{2}} \int_{a}^{b}\left\|f_{s}\right\|_{H^{\alpha}}^{2} \mathrm{~d}s.
\end{equation}

\end{lemma}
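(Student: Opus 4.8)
The statement collects two classical parabolic smoothing estimates for the heat semigroup on $\mathbb T^2$, and the plan is to prove them by passing to Fourier series and reducing to a scalar bound frequency by frequency. Writing $f_s=\sum_k\hat f(s,k)\,e^{ikx}$ and $g(t):=\int_a^t e^{\delta(t-s)\Delta}f_s\,ds$, and using that $e^{\delta r\Delta}$ acts on the $k$-th mode as multiplication by $e^{-\delta r|k|^2}$, one has $\widehat{g(t)}(k)=\int_a^t e^{-\delta(t-s)|k|^2}\hat f(s,k)\,ds$. We may assume $f$ has zero spatial average, so that only frequencies $|k|\ge1$ occur: in every application of this lemma in Section \ref{enhance} the argument is a spatial divergence, a transport term $v\cdot\nabla w$ with $\mathrm{div}\,v=0$, the charge density $\rho$, or $S_\zeta(u)-\tfrac{\kappa}{4}\Delta u$, each of which has vanishing mean; equivalently one reads $H^\alpha$ as the homogeneous Sobolev space.

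\textbf{First inequality.} Fix $t\in[a,b]$ and $k\neq0$. By the Cauchy--Schwarz inequality in $s$ together with $\int_a^t e^{-\delta(t-s)|k|^2}\,ds\le\int_0^\infty e^{-\delta r|k|^2}\,dr=(\delta|k|^2)^{-1}$,
\[
|\widehat{g(t)}(k)|^2\le\Big(\int_a^t e^{-\delta(t-s)|k|^2}\,ds\Big)\int_a^t e^{-\delta(t-s)|k|^2}|\hat f(s,k)|^2\,ds\le\frac{1}{\delta|k|^2}\int_a^t|\hat f(s,k)|^2\,ds.
\]
Multiply by $\langle k\rangle^{2(\alpha+1)}$, use $\langle k\rangle^{2(\alpha+1)}|k|^{-2}\le2\langle k\rangle^{2\alpha}$ (valid since $\langle k\rangle^2\le2|k|^2$ for $|k|\ge1$), sum over $k\neq0$, and exchange the sum with the $s$-integral by Tonelli; this yields $\|g(t)\|_{H^{\alpha+1}}^2\lesssim\delta^{-1}\int_a^t\|f_s\|_{H^\alpha}^2\,ds$.

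\textbf{Second inequality.} For fixed $k\neq0$ and $t\in[a,b]$ one has $\widehat{g(t)}(k)=\big(K_k*[\mathbbm{1}_{[a,b]}\hat f(\cdot,k)]\big)(t)$ with $K_k(r):=\mathbbm{1}_{\{r\ge0\}}e^{-\delta r|k|^2}$, because $K_k(t-s)=0$ for $s>t$. Since $\|K_k\|_{L^1(\mathbb R)}=(\delta|k|^2)^{-1}$, Young's convolution inequality ($L^1\ast L^2\to L^2$) in the time variable gives
\[
\int_a^b|\widehat{g(t)}(k)|^2\,dt\le\frac{1}{\delta^2|k|^4}\int_a^b|\hat f(t,k)|^2\,dt.
\]
Multiply by $\langle k\rangle^{2(\alpha+2)}$, use $\langle k\rangle^{2(\alpha+2)}|k|^{-4}\le4\langle k\rangle^{2\alpha}$, sum over $k\neq0$, and interchange the sum with the $t$-integral to obtain $\int_a^b\|g(t)\|_{H^{\alpha+2}}^2\,dt\lesssim\delta^{-2}\int_a^b\|f_t\|_{H^\alpha}^2\,dt$, as claimed.

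\textbf{Main obstacle.} There is essentially none; both bounds are standard. The only points that require attention are the zero Fourier mode — handled by restricting to mean-zero data, equivalently by using the homogeneous $H^\alpha$, which is harmless since every function to which the lemma is applied in Section \ref{enhance} has zero spatial average — and verifying the elementary multiplier inequalities $\langle k\rangle^{2(\alpha+j)}\le 2^{j}|k|^{2j}\langle k\rangle^{2\alpha}$ for $j=1,2$ and $|k|\ge1$, whose constants are absolute. Consequently the implicit constants in the two estimates are independent of $\delta$, $a$, $b$ and $f$ (and in fact of $\alpha$ as well).
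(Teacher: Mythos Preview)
Your proof is correct. The paper does not actually prove this lemma: it appears in the appendix under the heading ``Here we recall several known facts and results from \cite{luo2023enhanced}'' and is stated without argument, so there is no in-paper proof to compare against. Your Fourier-side argument---Cauchy--Schwarz for the pointwise-in-$t$ gain of one derivative, Young's convolution inequality in time for the $L^2_t$ gain of two derivatives---is the standard route and would be exactly what one expects the cited reference to contain. Your remark about the zero Fourier mode is accurate and matches the paper's usage: every instance in Section~\ref{enhance} applies the lemma to a mean-zero input.
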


\begin{theorem}\label{corrector}
Let $S_{\zeta}$ be defined as in \eqref{e.L112503} with $\zeta_k$ given by \eqref{e.L112504} and $N \geq 1$. Then there exists a constant $C>0$, independent of $N \geq 1$, such that for any $s \in \mathbb{R}$, $\alpha \in[0,1]$, and any divergence free field $v \in H^{s}\left(\mathbb{T}^{2}, \mathbb{R}^{2}\right)$, one has 
\begin{equation}\label{e.w111501}
\left\|S_{\zeta}(v)-\frac{1}{4} \kappa \Delta v\right\|_{H^{s-2-\alpha}} \leq C \frac{\kappa}{N^{\alpha}}\|v\|_{H^{s}}.
\end{equation}
\end{theorem}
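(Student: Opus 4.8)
The plan is to reduce \eqref{e.w111501} to a pointwise estimate on a Fourier symbol. Because $\sigma_k = a_k e_k$ is supported on the single frequency $k$, with $a_k\perp k$ and $a_{-k}=a_k$, each summand $\Pi\big[\sigma_k\cdot\nabla\Pi(\sigma_{-k}\cdot\nabla\,\cdot\,)\big]$ shifts frequency by $-k$ and then by $+k$, so it preserves frequency; hence $S_\zeta$ is a Fourier multiplier. Writing $v=\sum_m v_m e^{imx}$ with $m\cdot v_m=0$, a direct computation — using the Stratonovich--It\^o correction with the convention $[W^k,W^\ell]_t=2t\delta_{k,-\ell}$ and the cancellation $a_k\cdot(m-k)=a_k\cdot m$ — shows that the $m$-th Fourier coefficient of $S_\zeta(v)$ equals $\lambda(m)v_m$, and (since in two dimensions $\{p:p\perp m\}$ is one-dimensional) the outer Leray projection forces the whole output into $\mathrm{span}\{m^\perp\}$, so that, with $\hat p:=p/|p|$ for $p\neq0$,
\[
\lambda(m)=-\,c\,\kappa\,|m|^2\sum_{k\in\Z_0^2}\zeta_k^2\,(\hat k\cdot\hat m^\perp)^2\big(1-(\widehat{m-k}\cdot\hat m^\perp)^2\big),
\]
where $c$ is an absolute constant fixed by the normalizations \eqref{e.L112101}--\eqref{e.L112512} so that the leading term below is exactly $\tfrac14\kappa\Delta$; here I also used $(a_k\cdot m)^2=|m|^2(\hat k\cdot\hat m^\perp)^2$. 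Since $\tfrac14\kappa\Delta v$ has symbol $-\tfrac14\kappa|m|^2$ on $\{p\perp m\}$ and the $m=0$ mode is trivial, Plancherel reduces \eqref{e.w111501} to the single bound $\big|\lambda(m)+\tfrac14\kappa|m|^2\big|\le C\kappa N^{-\alpha}\langle m\rangle^{2+\alpha}$ for all $m\in\Z_0^2$.

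Next I identify the leading term via lattice symmetry. Since $\zeta_k^2$ depends only on $|k|$ and the shell $\{N\le|k|\le2N\}\cap\Z^2$ is invariant under the dihedral group generated by coordinate swaps and sign flips, averaging gives the \emph{exact} identity $\sum_k\zeta_k^2\,\hat k\otimes\hat k=\tfrac12 I$ (using $\|\zeta\|_{\ell^2}=1$), hence $\sum_k\zeta_k^2(\hat k\cdot\hat m^\perp)^2=\tfrac12$; and pushing the same symmetry through the quartic moment expresses $\sum_k\zeta_k^2(\hat k\cdot\hat m^\perp)^4$ in terms of the single scalar $q:=\sum_k\zeta_k^2\hat k_1^2\hat k_2^2$ and a bounded trigonometric factor in the direction of $m$, with the value $\tfrac38$ recovered exactly when $q=\tfrac18$ (its continuum value). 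Therefore, upon replacing $\widehat{m-k}$ by $\widehat{-k}=\hat k$ (an error handled in the last step), $\sum_k\zeta_k^2(\hat k\cdot\hat m^\perp)^2(1-(\hat k\cdot\hat m^\perp)^2)=\tfrac12-\tfrac38+O(|q-\tfrac18|)=\tfrac18+O(|q-\tfrac18|)$, so that $\lambda(m)=-c\kappa|m|^2\cdot\tfrac18+(\text{errors})$, which by the choice $c/8=1/4$ is precisely the symbol of $\tfrac14\kappa\Delta$ plus everything discarded.

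Finally I estimate the remainder mode by mode, splitting on $|m|$. For $|m|\ge N/2$ the crude bounds $|\lambda(m)|\lesssim\kappa|m|^2\sum_k\zeta_k^2=\kappa|m|^2$ and $\tfrac14\kappa|m|^2$ already yield $\big|\lambda(m)+\tfrac14\kappa|m|^2\big|\lesssim\kappa|m|^2\lesssim\kappa N^{-\alpha}|m|^{2+\alpha}$, since $|m|^\alpha\gtrsim N^\alpha$. For $1\le|m|<N/2$ every $k$ in the shell has $|m-k|\ge N/2$, and two error sources remain: (i) replacing $\widehat{m-k}$ by $\widehat{-k}$, controlled by the Lipschitz estimate $|\widehat{m-k}-\widehat{-k}|\lesssim|m|/\min(|m-k|,|k|)\lesssim|m|/N$ together with $(\hat k\cdot\hat m^\perp)^2\le1$, contributing $\lesssim\kappa|m|^2\cdot|m|/N=\kappa|m|^3/N$; and (ii) the lattice-versus-continuum discrepancy, contributing $\lesssim\kappa|m|^2\,|q-\tfrac18|$. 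Both are $\lesssim\kappa N^{-\alpha}|m|^{2+\alpha}$ for $1\le|m|<N/2$ and $0\le\alpha\le1$ (using, for (ii), $|q-\tfrac18|\lesssim N^{-1}$ and $|m|^2/N\le|m|^2/N^\alpha\le\langle m\rangle^{2+\alpha}/N^\alpha$). Combining the two regimes with the trivial $m=0$ mode gives the desired symbol bound, hence \eqref{e.w111501}.

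The main obstacle is item (ii): the quantitative lattice-point equidistribution estimate $|q-\tfrac18|=\big|\sum_k\zeta_k^2\hat k_1^2\hat k_2^2-\tfrac18\big|\lesssim N^{-1}$, uniformly in $N$, which is where the precise choice \eqref{e.L112504} of $\zeta$ (in particular $\|\zeta\|_{\ell^\infty}\sim N^{-1}$) is genuinely used, together with the need to keep the geometric Lipschitz bounds in (i) uniform down to the borderline $|m|\approx N/2$; these are exactly the technical inputs established in \cite{luo2023enhanced}, and the rest of the argument is bookkeeping of the symbol estimate above.
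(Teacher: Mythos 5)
The paper does not actually prove Theorem \ref{corrector}: it is recalled verbatim from \cite{luo2023enhanced} (``Here we recall several known facts and results\dots''), so there is no in-paper argument to compare against. Your sketch reconstructs essentially the strategy of that reference, and its skeleton is sound: $S_\zeta$ is a Fourier multiplier on divergence-free fields; since $1-(\widehat{m-k}\cdot\hat m^\perp)^2=(\widehat{m-k}\cdot\hat m)^2$, your symbol agrees with the direct computation $\lambda(m)=-c\,\kappa|m|^2\sum_k\zeta_k^2(\hat k\cdot\hat m^\perp)^2(\widehat{m-k}\cdot\hat m)^2$; the exact identity $\sum_k\zeta_k^2(\hat k\cdot\hat m^\perp)^2=\tfrac12$, the reduction of the quartic moment to $q=\sum_k\zeta_k^2\hat k_1^2\hat k_2^2$ with deviation $O(|q-\tfrac18|)$, and the two-regime bookkeeping ($|m|\geq N/2$ crude, $|m|<N/2$ with Lipschitz error $|m|/N$) all check out and do yield the required bound $|\lambda(m)+\tfrac14\kappa|m|^2|\lesssim \kappa N^{-\alpha}\langle m\rangle^{2+\alpha}$, provided the two loose ends below are closed. (Minor: $\widehat{-k}=-\hat k$, not $\hat k$; harmless since only squares appear.)

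Two points are genuine gaps as written. First, the constant: you leave $c$ undetermined and ``fix it by the normalizations so that the leading term is exactly $\tfrac14\kappa\Delta$'', which is circular, since the value $\tfrac14$ is part of the claim. Computing directly from the literal definition \eqref{e.L112503} (using $a_{-k}=a_k$, $a_k\cdot(m-k)=a_k\cdot m$, and the two Leray projections contributing $(\widehat{m-k}\cdot\hat m)^2$) gives $c=1$, and since the angular average of $(\hat k\cdot\hat m^\perp)^2(\hat k\cdot\hat m)^2$ is $\tfrac18$, the leading term would then be $\tfrac18\kappa\Delta$, not $\tfrac14\kappa\Delta$. The missing factor $2$ comes from the covariance convention $[W^k,W^{-k}]_t=2t$: the Stratonovich--It\^o corrector for the $u$-equation is $2\kappa\sum_k\zeta_k^2\Pi[\sigma_k\cdot\nabla\Pi(\sigma_{-k}\cdot\nabla u)]$, exactly as the scalar corrector must be $2\kappa\sum_k\zeta_k^2(\sigma_k\cdot\nabla)(\sigma_{-k}\cdot\nabla)$ to produce the $(D+\kappa)\Delta c_i$ terms in \eqref{e.L112401}; so your $c$ must be derived from this bookkeeping (it equals $2$ relative to \eqref{e.L112503} as printed, which appears to carry a normalization typo inherited from \cite{luo2023enhanced}), not postulated. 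Second, the quantitative input $|q-\tfrac18|\lesssim N^{-1}$ is only asserted by deferral. It is true and elementary: writing $\hat k_1^2\hat k_2^2-\tfrac18=-\tfrac18\cos(4\phi_k)$ and comparing the weighted lattice sum over the shell $N\leq|k|\leq 2N$ with the corresponding (vanishing) angular integral gives a Riemann-sum error $O_\gamma(N^{1-2\gamma})$ against the normalization $\Lambda_N^2\sim N^{2-2\gamma}$; since this is the one place where the specific choice \eqref{e.L112504} is used, a complete proof should include this short argument (or cite the precise statement in \cite{luo2023enhanced}) rather than leave it as an appeal.
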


\bibliographystyle{abbrv}
\bibliography{npns}

\end{document}